\makeatletter \thm@headfont{\bfseries\scshape} \makeatother
\newtheorem{thm}{Theorem}
\newtheorem{lem}[thm]{Lemma}
\newtheorem{cor}[thm]{Corollary}
\newtheorem{pro}[thm]{Proposition}
\newtheorem{exmp}[thm]{Example}
\newtheorem{defn}[thm]{Definition}
\newtheorem{rem}[thm]{Remark}
\begin{document}
\title[Sofic measures and densities of level sets]
      {Sofic measures and densities of level sets}

\address{{\bf{Alain THOMAS}}\\
448 all\'ee des Cantons, 83640 Plan d'Aups Sainte Baume}
\email{\rm alain-yves.thomas@laposte.net}

\author[A.~THOMAS]
       {Alain~THOMAS}

\keywords{partition function, numeration system, radix expansion, Pisot scale, Bernoulli convolutions}

\subjclass{11P99, 28XX, 15B48}

\begin{abstract}
The Bernoulli convolution associated to the real $\beta>1$ and the  probability vector $(p_0,\dots,p_{d-1})$ is a probability measure $\eta_{\beta,p}$ on $\mathbb R$, solution of the self-similarity relation $\displaystyle\eta=\sum_{k=0}^{d-1}p_k\cdot\eta\circ S_k$ where $S_k(x)=\frac{x+k}\beta$. If $\beta$ is an integer or a Pisot algebraic number with finite Rényi expansion, $\eta_{\beta,p}$ is sofic and a Markov chain is naturally associated. If $\beta=b\in\mathbb N$ and $p_0=\dots=p_{d-1}=\frac1d$, the study of $\eta_{b,p}$ is close to the study of the order of growth of the number of representations in base $b$ with digits in $\{0,1,\dots,d-1\}$. In the case $b=2$ and $d=3$ it has also something to do with the metric properties of the continued fractions.
\end{abstract}

\maketitle
\addtocounter{section}{-1}
\parindent0pt

\section{Introduction}

The different sections of this paper are relatively independent. A sofic probability measure on a space $\{0,1,\dots,b-1\}^{\mathbb N}$, i.e. the image of a Markov probability measure by a shift-commuting continuous map, is representable by products of matrices as explained in Theorem \ref{Msofic}. The measures defined by Bernoulli convolution \cite{PSS}, i.e.
$$
\eta_{\beta,p}:=\begin{matrix}{\scriptstyle\infty}\\{\Large\hexstar}\\{\scriptstyle n=1}\end{matrix}\left(\sum_{k=0}^{d-1}p_k\delta_{\frac k{\beta^n}}\right)
$$
where $\beta\in\mathbb R$, $p_0,\dots,p_{d-1}>0$ and $\sum_kp_k=1$, are sofic if $\beta$ is an integer. They are also sofic when $\beta$ is a Pisot number (i.e., an algebraic number whose conjugates belong to the open unit disk) with finite Rényi expansion \cite{R}. A transducer of normalization \cite[Theorem~2.3.39]{FS} is naturally associated to $\beta$ and~$d$. The matrices associated to the measure $\eta_{\beta,p}$ are easy to define when $\beta$ is a integer, $\beta=b\ge2$. In the case $p_0=\dots=p_{d-1}=\frac1d$ they are used by different authors (see for instance \cite{Pr}) because they are related to the number of representations of the integer $n$ in base~$b$ with digits in $\{0,\dots,d-1\}$, let $\mathcal N(n)$; and the Hausdorff dimensions of the level sets of $\eta_{b,p}$ are related with the lower exponential densities of some sets of integers, on which $\mathcal N(n)$ has a given order of growth (Theorem \ref{levels}). There exist many partial results about the level sets (see for instance \cite{F1} and \cite{F2}) of the linearly representable measures (see \cite{BP} and \cite{BPW}).

\section{Sofic subshifts}

By subshifts we mean closed subsets of $\{0,1,\dots,b-1\}^{\mathbb N}$ invariant by the shift $\sigma:(\omega_n)_{n\in\mathbb N}\mapsto(\omega_{n+1})_{n\in\mathbb N}$. Let us give two equivalent definitions of the sofic subshifts.

\begin{defn}\label{la1}
A sofic subshift is a subshift recognizable by a finite automaton \cite{B}.
\end{defn}

\begin{defn}\label{thesecond}
A subshift is a sofic iff it is the image of a topological Markov subshift by a letter-to-letter morphism.
\end{defn}

In this definition, "letter-to-letter morphism" can be replaced by "continuous morphism", because any continuous morphism (i.e. any continuous map, for the usual topology, commuting with the shift) has the form
$$
\varphi\big((\omega_n)_{n\in\mathbb N}\big)=\big(\psi(\omega_{n-s}\omega_{n-s+1}\dots\omega_{n+s})\big)_{n\in\mathbb N}
$$
with $\psi:\{0,1,\dots,b-1\}^{2s+1}\to\{0,1,\dots,b'-1\}$, being understood that $\omega_n=0$ for $n\le0$.

\begin{exmp}\label{exmp1}
A example of sofic subshift in the sense of Definition \ref{la1}, is the set of the labels of the infinite paths in the following automaton: 

\includegraphics[trim = 0 220 0 220,clip,scale=0.4]{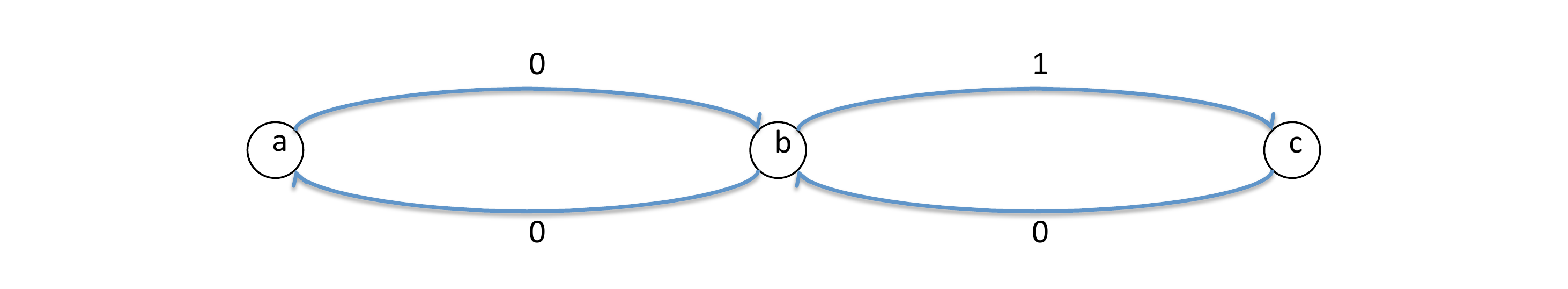}

(this subshift is the set of the sequences $(\omega_n)_{n\in\mathbb N}\in\{0,1\}^\mathbb N$ without factor $10^{2i}1$ for any $i\in\mathbb N\cup\{0\}$, hence it excludes an infinite set of of words and it is not of finite type). It is sofic in the sense of Definition \ref{thesecond} because it is the image by the morphism $\pi:(x,y)\mapsto y$, of the Markov subshift of the sequences $(\xi_n)_{n\in\mathbb N}\in\{(a,0), (b,0),(b,1),(c,0)\}^{\mathbb N}$ such that $\xi_n\xi_{n+1}\in\{(a,0)(b,0),(a,0)(b,1),(b,0)(a,0),(b,1)(c,0),(c,0)(b,0),(c,0)(b,1)$ for any $n$. The graph of this Markov subshift is:

\includegraphics[trim = 0 180 0 228,clip,scale=0.4]{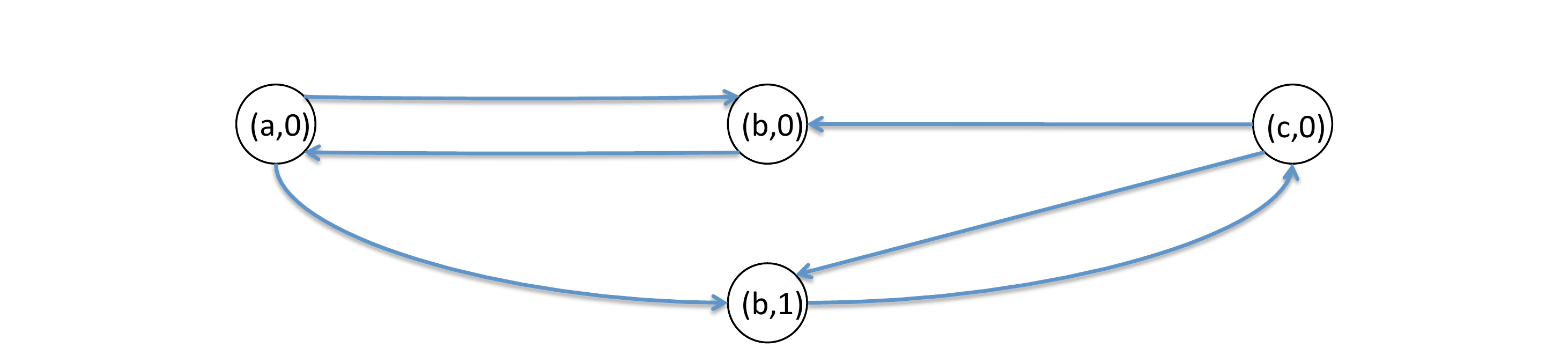}
\end{exmp}

\begin{exmp}\label{exmp2}
A example of sofic subshift in the sense of Definition \ref{thesecond}, is the image of the Markov subshift associated to the graph:

\includegraphics[trim = 0 200 0 200,clip,scale=0.4]{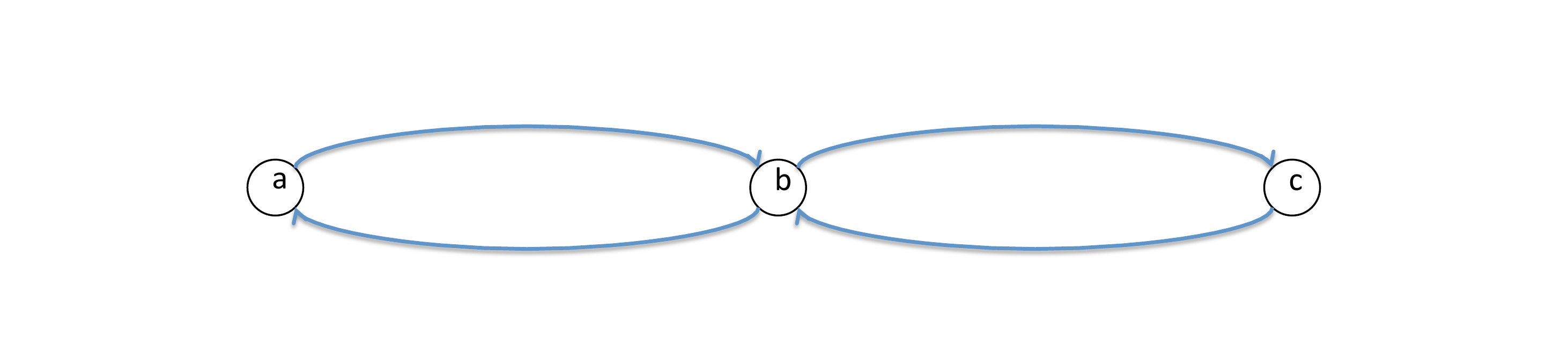}

by the letter-to-letter morphism $\varphi:\{a,b,c\}^{\mathbb N}\to\{0,1\}^{\mathbb N}$ associated to $\psi:\{a,b,c\}\to\{0,1\}$, $\psi(a)=0$, $\psi(b)=0$, $\psi(c)=1$ (as in Example \ref{exmp1}, it excludes the words $10^{2i}1$, $i\in\mathbb N\cup\{0\}$). It is also a sofic subshift in the sense of Definition \ref{la1}, because it is recognizable by the following automaton constructed from the graph (each arrow with initial state $x$ has label $\psi(x)$):

\includegraphics[trim = 0 200 0 200,clip,scale=0.4]{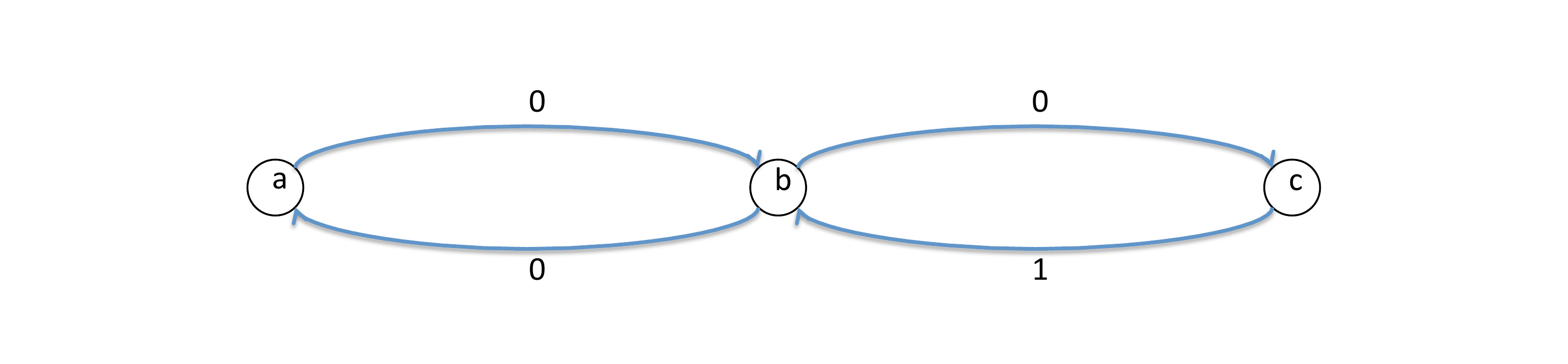}

Notice that, if we associate to this automaton a Markov subshift and a morphism by the same method as in Example \ref{exmp1}, we recover the initial Markov subshift and morphism of Example \ref{exmp2}.
\end{exmp}

\section{Markov, sofic and linearly representable measures}\label{Mslr}

\begin{defn}\label{measures}
(i) We call a (homogeneous) Markov probability measure (not necessarily shift-invariant), a measure $\mu$ on the product set $\{0,1,\dots,b-1\}^{\mathbb N}$ defined by setting, for any cylinder set $[\omega_1\dots\omega_n]=\{(\xi_i)_{i\in\mathbb N}\;:\;\xi_1\dots\xi_n=\omega_1\dots\omega_n\}$,
\begin{equation}\label{Mark}
\mu[\omega_1\dots\omega_n]=p_{\omega_1}p_{\omega_1\omega_2}
\dots p_{\omega_{n-1}\omega_n}
\end{equation}
where $p=\begin{pmatrix}p_0&\dots&p_{b-1}\end{pmatrix}$ is a positive probability vector and $P=\begin{pmatrix}p_{00}&\dots&p_{0(b-1)}\\\vdots&\ddots&\vdots\\p_{(b-1)0}&\dots&p_{(b-1)(b-1)}\end{pmatrix}$ a nonnegative stochastic matrix.

Clearly the support of $\mu$ is a Markov subshift, and $\mu$ is shift-invariant (or stationary) iff $p$ is a left eigenvector of $P$.

(ii) A probability measure on $\{0,1,\dots,b'-1\}^{\mathbb N}$ is called sofic if it is the image of a Markov probability measure by a continuous morphism $\varphi=\{0,1,\dots,b-1\}^{\mathbb N}\to\{0,1,\dots,b'-1\}^{\mathbb N}$. This morphism can be chosen letter-to-letter: $\varphi((\omega_n)_{n\in\mathbb N})=(\psi(\omega_n))_{n\in\mathbb N}$.

(iii) According to \cite{BP} we say that a probability measure $\eta$ on $\{0,1,\dots,b-1\}^{\mathbb N}$ is linearly representable if there exist a set of $r$-dimensional nonnegative row vectors $\{R_0,\dots,R_{b-1}\}$, a set of $r\times r$ nonnegative matrices $\mathcal M=\{M_0,\dots,M_{b-1}\}$ and a positive $r$-dimensional column vector $C$, satisfying both conditions
\begin{equation}\label{conditions}
\Big(\sum_iR_i\Big)C=1\quad\hbox{and}\quad\Big(\sum_iM_i\Big)C=C,
\end{equation}
and such that
\begin{equation}\label{M}
\eta[\omega_1\dots\omega_n]=R_{\omega_1}M_{\omega_2}
\dots M_{\omega_n}C.
\end{equation}
\end{defn}

\begin{rem}
If $\sum_iM_i$ is irreducible and if $R_i=RM_i$ for any $i$, where $R$ is the positive left eigenvector of $\sum_iM_i$ such that $RC=1$, the measure $\eta$ defined in (\ref{M}) is $\sigma$-invariant and
$$
\eta[\omega_1\dots\omega_n]=RM_{\omega_1}
\dots M_{\omega_n}C.
$$
\end{rem}

\begin{thm}\label{Msofic}
A probability measure on $\{0,1,\dots,b-1\}^{\mathbb N}$ is  sofic if and only if it is linearly representable.
\end{thm}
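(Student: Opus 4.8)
The plan is to prove both implications directly from the definitions, exhibiting explicit combinatorial-linear-algebraic data in each direction.

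\textbf{From linearly representable to sofic.} Suppose $\eta$ on $\{0,\dots,b-1\}^{\mathbb N}$ is given by row vectors $R_i$, matrices $M_i$ and column vector $C$ as in \eqref{M}, with the normalizations \eqref{conditions}. The idea is to build a Markov measure on an alphabet whose letters record a \emph{transition between coordinate-indices of the matrices together with the emitted symbol}. Concretely, take as new alphabet the set of triples $(s,i,t)$ with $1\le s,t\le r$ and $0\le i\le b-1$ such that the $(s,t)$-entry $(M_i)_{s,t}$ is nonzero (for the initial step one uses $(R_i)_t$ in place of a row of $M_i$). Define a stochastic matrix $P$ on this alphabet so that a letter $(s,i,t)$ may be followed only by a letter $(t,j,u)$, with transition probability proportional to $(M_j)_{t,u}C_u/(M_jC)_t$ -- i.e. the conditional probability, under $\eta$, of seeing symbol $j$ and then routing through index $u$, given that we have reached index $t$. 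The initial vector $p$ is chosen from the $R_i$ and $C$. One checks by telescoping that pushing this Markov measure forward by the letter-to-letter map $\varphi:(s,i,t)\mapsto i$ yields exactly $R_{\omega_1}M_{\omega_2}\cdots M_{\omega_n}C$ on the cylinder $[\omega_1\dots\omega_n]$; the factors $C_u$, $(M_jC)_t$ cancel in cascade, leaving the advertised product. The positivity/stochasticity bookkeeping is where one must be slightly careful (entries $(M_jC)_t$ could vanish; such indices $t$ are simply never reached, so one restricts the alphabet to the ``live'' triples), but it is routine.

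\textbf{From sofic to linearly representable.} Conversely, let $\mu$ be a Markov measure on $\{0,\dots,b-1\}^{\mathbb N}$ with data $(p,P)$ as in \eqref{Mark}, and let $\varphi$ be induced by $\psi:\{0,\dots,b-1\}\to\{0,\dots,b'-1\}$; set $\eta=\varphi_*\mu$. For a symbol $a\in\{0,\dots,b'-1\}$ define the $b\times b$ matrix $M_a$ by $(M_a)_{k\ell}=p_{k\ell}$ if $\psi(\ell)=a$ and $0$ otherwise (a ``masked'' version of $P$ selecting only transitions that emit $a$). Let $C$ be the all-ones column vector, and define the row vector $R_a$ by $(R_a)_\ell = p_\ell$ if $\psi(\ell)=a$, else $0$. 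Then
$$
\eta[\omega_1\dots\omega_n]=\mu\big(\varphi^{-1}[\omega_1\dots\omega_n]\big)=\sum_{\substack{\xi_1\dots\xi_n\\ \psi(\xi_j)=\omega_j}}p_{\xi_1}p_{\xi_1\xi_2}\cdots p_{\xi_{n-1}\xi_n}=R_{\omega_1}M_{\omega_2}\cdots M_{\omega_n}C,
$$
since multiplying out the right-hand side produces exactly a sum over index-paths $\xi_1,\dots,\xi_n$ with each $\psi(\xi_j)=\omega_j$, each path weighted by the Markov product. Finally $\big(\sum_a R_a\big)C=\sum_\ell p_\ell=1$ and $\big(\sum_a M_a\big)C=PC=C$ since $P$ is stochastic, so \eqref{conditions} holds and $\eta$ is linearly representable.

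\textbf{Main obstacle.} Neither direction is deep; the real work is the forward (linearly representable $\Rightarrow$ sofic) direction, where one must turn an abstract nonnegative-matrix representation into a genuine \emph{probability} (stochastic) structure. The normalization conditions \eqref{conditions} are exactly what make this possible, and the key trick is the change of variables conjugating the $M_i$ by $\operatorname{diag}(C)$ and dividing rows by the appropriate partial sums so that each $M_i$ becomes substochastic and the family sums to a stochastic matrix; degenerate indices (those carrying zero mass) must be pruned from the auxiliary alphabet. Once the right alphabet of triples and the right initial distribution are written down, the verification that the pushforward reproduces \eqref{M} is a straightforward telescoping computation.
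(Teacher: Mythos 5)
Your proof is correct in substance, and your ``sofic $\Rightarrow$ linearly representable'' direction is essentially identical to the paper's: your masked matrices $M_a$ (with $(M_a)_{k\ell}=p_{k\ell}$ when $\psi(\ell)=a$) are exactly the paper's $M_{i'}=\sum_{i\in\psi^{-1}(i')}P_i$, where $P_i$ is $P$ with all columns except the $i$-th zeroed out. The converse is where you genuinely diverge. The paper first conjugates by $\Delta=\operatorname{diag}(C)$ so that the new column vector is all-ones and $\sum_iM'_i$ is stochastic, then inflates the alphabet to $\{0,\dots,rb-1\}$ by a block construction ($R'_i$ placed in the $i$-th block of a row vector, $M'_i$ repeated down the $i$-th block column) and recovers the emitted letter from the state via $i\mapsto\lfloor i/r\rfloor$. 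You instead build the chain directly on an alphabet of triples $(s,i,t)$ with explicitly computed conditional probabilities; this is the same normalization in disguise (dividing by $C_t$ is the conjugation by $\Delta$), it costs a somewhat larger alphabet (up to $r^2b$ letters rather than $rb$ --- the coordinate $s$ is in fact redundant, pairs $(i,t)$ suffice), but it makes the probabilistic meaning transparent and automatically discards the states of zero mass. Both arguments use \eqref{conditions} in the same way, and both verifications reduce to the same telescoping computation.

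One slip should be fixed: the transition probability from $(s,i,t)$ to $(t,j,u)$ must be $(M_j)_{t,u}C_u/C_t$, not ``proportional to $(M_j)_{t,u}C_u/(M_jC)_t$''. The former is already stochastic, since $\sum_{j,u}(M_j)_{t,u}C_u=\big(\sum_jM_jC\big)_t=C_t$ by \eqref{conditions}, and it telescopes to $R_{\omega_1}M_{\omega_2}\cdots M_{\omega_n}C$ because $C_{t_1}\cdot\frac{C_{t_2}}{C_{t_1}}\cdots\frac{C_{t_n}}{C_{t_{n-1}}}=C_{t_n}$. Normalizing your displayed expression over all pairs $(j,u)$ would instead introduce a factor $1/\#\{j:(M_jC)_t>0\}$ that destroys the telescoping. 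Your verbal description --- the conditional probability of emitting $j$ and routing to $u$ given that the chain sits at $t$ --- is the correct recipe; it is the formula that does not match it.
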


\begin{proof}The method we use is the one of \cite{BP}. First we note that the Markov measures are linearly representable because the formula (\ref{Mark}) is equivalent to
\begin{equation}\label{Markovmat}
\mu[\omega_1\dots\omega_n]=\pi_{\omega_1}P_{\omega_2}
\dots P_{\omega_n}\begin{pmatrix}1\\\vdots\\1\end{pmatrix}
\end{equation}
where
$$
\begin{array}{rclrclr}\pi_0&=&\begin{pmatrix}p_0&0&\dots&0\end{pmatrix},&\pi_1&=&\begin{pmatrix}0&p_1&\dots&0\end{pmatrix},&\dots\\
P_0&=&\begin{pmatrix}p_{00}&0&\dots&0\\\vdots&\vdots&\ddots&\vdots\\p_{(b-1)0}&0&\dots&0\end{pmatrix},&P_1&=&\begin{pmatrix}0&p_{01}&\dots&0\\\vdots&\vdots&\ddots&\vdots\\0&p_{(b-1)1}&\dots&0\end{pmatrix},&\dots
\end{array}
$$

Now any sofic measure $\nu$ defined from a Markov measure $\mu$ and a map
$$
\psi:\{0,1,\dots,b-1\}\to\{0,1,\dots,b'-1\}
$$
is linearly representable because (\ref{Markovmat}) implies
\begin{equation}\label{Sof}
\begin{array}{rcl}
\nu[\omega'_1\dots\omega'_n]&=&\sum_{\omega_1\in\psi^{-1}(\omega'_1)}\dots\sum_{\omega_n\in\psi^{-1}(\omega'_n)}\mu[\omega_1\dots\omega_n]\\
&=&(\sum_{\omega_1\in\psi^{-1}(\omega'_1)}\pi_{\omega_1})\dots(\sum_{\omega_n\in\psi^{-1}(\omega'_n)}P_{\omega_n})\begin{pmatrix}1\\\vdots\\1\end{pmatrix}\\&=&R_{\omega_1}M_{\omega_2}
\dots M_{\omega_n}C\end{array}
\end{equation}
where, for $0\le i'\le b'$, the row vector $R_{i'}:=\sum_{i\in\psi^{-1}(i')}\pi_i$, the matrix $M_{i'}:=\sum_{i\in\psi^{-1}(i')}P_i$ and the column vector $C:=\begin{pmatrix}1\\\vdots\\1\end{pmatrix}$  satisfy both conditions in (\ref{conditions}).

Conversely let $\eta$ be a linearly representable measure, so there exists some $r$-dimensional row vectors $R_0,\dots,R_{b-1}$, some $r\times r$ matrices $M_0,\dots,M_{b-1}$ and $r$-dimensional column vector $C$ satisfying (\ref{conditions}) and~(\ref{M}). Let $\Delta$ be the diagonal matrix whose diagonal entries are the entries of~$C$; setting $R'_i:=R_i\Delta$, $M'_i:=\Delta^{-1}M_i\Delta$ and $C':=\Delta^{-1}C$, the entries of $C'$ are $1$ and
$$
\eta[\omega_1\dots\omega_n]=R'_{\omega_1}M'_{\omega_2}
\dots M'_{\omega_n}C'.
$$
Setting
$$
\begin{array}{rclrclr}R''_0&:=&\begin{pmatrix}R'_0&0&\dots&0\end{pmatrix},&R''_1&:=&\begin{pmatrix}0&R'_1&\dots&0\end{pmatrix},&\dots\\
M''_0&:=&\begin{pmatrix}M'_0&0&\dots&0\\\vdots&\vdots&\ddots&\vdots\\M'_0&0&\dots&0\end{pmatrix},&M''_1&:=&\begin{pmatrix}0&M'_1&\dots&0\\\vdots&\vdots&\ddots&\vdots\\0&M'_1&\dots&0\end{pmatrix},&\dots\\C''&:=&\begin{pmatrix}C'\\\vdots\\C'\end{pmatrix}&&&
\end{array}
$$
we have again
$$
\eta[\omega_1\dots\omega_n]=R''_{\omega_1}M''_{\omega_2}
\dots M''_{\omega_n}C''.
$$
From (\ref{conditions}), $p:=\sum_iR''_i$ is a probability vector and $P:=\sum_iM''_i$ is a stochastic matrix, they define a Markov probability measure $\mu$ on the product set $\{0,1,\dots,rb-1\}^{\mathbb N}$, and $\eta$ is sofic because it is the image of $\mu$ by the morphism defined from the map $\psi:i\mapsto\left\lfloor\frac ir\right\rfloor$, $i\in\{0,1,\dots,rb-1\}$.\hfill\end{proof}

\section{Level sets and density spectrum associated to a map $f:\mathbb N\to\mathbb R^*_+$}

\subsection{Position of the problem}
This section is independent of the previous. In Theorem \ref{sets} below we don't make any hypothesis on the map $f:\mathbb N\to\mathbb R^*_+$ but, in the examples we consider later, $f$ has a polynomial rate of growth:
\begin{equation}\label{pol}
-\infty<\alpha_1:=\liminf_{n\to\infty}\frac{\log f(n)}{\log n}\le\alpha_2:=\limsup_{n\to\infty}\frac{\log f(n)}{\log n}<+\infty.
\end{equation}
The purpose is to associate to any $\alpha\in[\alpha_1,\alpha_2]$, a set of positive integers $\mathcal E(\alpha)$ as large as possible such that 
\begin{equation}\label{order}
\lim_{n\in\mathcal E(\alpha),\ n\to\infty}\frac{\log f(n)}{\log n}=\alpha
\end{equation}
(the notation $\displaystyle\lim_{n\in E,\ n\to\infty}u_n$ stands for $\displaystyle\lim_{k\to\infty}u_{n_k}$ where $E=\{n_1,n_2,\dots\}$ with $n_1<n_2<\dots$). Then we call "level sets" the sets $\mathcal E(\alpha)$, and "density spectrum" the map which associates to $\alpha$ the density of $\mathcal E(\alpha)$. Because of the following remark, we do not use the natural densities defined for any $S\subset\mathbb N$ by
$$
\begin{array}{l}\displaystyle\hbox{d}_-(S):=\liminf_{N\to\infty}\frac{\#S\cap[1,N)}N\\\displaystyle\hbox{d}_+(S):=\limsup_{N\to\infty}\frac{\#S\cap[1,N)}N\end{array}
$$
but the exponential densities defined by
$$
\begin{array}{l}\displaystyle\hbox{d}_-^{\rm{exp}}(S):=\liminf_{N\to\infty}\frac{\log\big(\#S\cap[1,N)\big)}{\log N}\\\displaystyle\hbox{d}_+^{\rm{exp}}(S):=\limsup_{N\to\infty}\frac{\log\big(\#S\cap[1,N)\big)}{\log N}.\end{array}
$$

\begin{rem}
One cannot expect to find for any $\alpha$ in a non trivial interval $[\alpha_1,\alpha_2]$, a set $\mathcal E(\alpha)$ with positive natural density $\hbox{d}_-(\mathcal E(\alpha))$ and satisfying the condition (\ref{order}). Indeed if we have $\hbox{d}_-(\mathcal E(\alpha))>0$ for any $\alpha$ in a non countable set $A$, there exists $\varepsilon>0$ such that $\hbox{d}_-(\mathcal E(\alpha))\ge\varepsilon$ for infinitely many values of $\alpha$. When $\alpha\ne\beta$, the set $\mathcal E(\alpha)\cap\mathcal E(\beta)$ is obviously finite, so by removing a finite number of elements from one of these sets, $\mathcal E(\alpha)$ and $\mathcal E(\beta)$ are disjoint. One can construct in this way a sequence of disjoint sets of positive integers $\mathcal E(\alpha_n)$ such that $\hbox{d}_-(\mathcal E(\alpha_n))\ge\varepsilon$, but this is in contradiction with the following inequalities (deduced from the general formula $\liminf_{n\to\infty}(u_n+v_n)\ge\liminf_{n\to\infty}(u_n)+\liminf_{n\to\infty}(v_n)$):
$$
1\ge \hbox{d}_-\big(\bigcup_{n\in\mathbb N}\mathcal E(\alpha_n)\big)\ge\sum_{n\in\mathbb N}\hbox{d}_-(\mathcal E(\alpha_n)).
$$
\end{rem}

\begin{thm}\label{sets}
We associate to any map $f:\mathbb N\to\mathbb R^*_+$ and to any $\alpha\ge0$, $\varepsilon>0$, the set
\begin{equation}\label{level}
\mathcal E(\alpha,\varepsilon):=\Big\{n\in\mathbb N\;:\;\alpha-\varepsilon\le\frac{\log f(n)}{\log n}\le\alpha+\varepsilon\Big\}.
\end{equation}
There exist some integers $1=N_1<N_2<\dots$ such that the set
\begin{equation}\label{Ealpha}
\mathcal E(\alpha):=\bigcup_{k\in\mathbb N}\big(\mathcal E(\alpha,1/k)\cap[N_k,N_{k+1},)\big),
\end{equation}
has densities
 \begin{equation}\label{fourdensities}
\begin{array}{l}
\displaystyle\hbox{d}_-(\mathcal E(\alpha))=\lim_{\varepsilon\to0}\hbox{d}_-(\mathcal E(\alpha,\varepsilon))=\inf_{\varepsilon>0}\hbox{d}_-(\mathcal E(\alpha,\varepsilon))\\\displaystyle\hbox{d}_-^{\rm{exp}}(\mathcal E(\alpha))=\lim_{\varepsilon\to0}\hbox{d}_-^{\rm{exp}}(\mathcal E(\alpha,\varepsilon))=\inf_{\varepsilon>0}\hbox{d}_-^{\rm{exp}}(\mathcal E(\alpha,\varepsilon))\\
\displaystyle\hbox{d}_+(\mathcal E(\alpha))=\lim_{\varepsilon\to0}\hbox{d}_+(\mathcal E(\alpha,\varepsilon))=\inf_{\varepsilon>0}\hbox{d}_+(\mathcal E(\alpha,\varepsilon))\\
\displaystyle\hbox{d}_+^{\rm{exp}}(\mathcal E(\alpha))=\lim_{\varepsilon\to0}\hbox{d}_+^{\rm{exp}}(\mathcal E(\alpha,\varepsilon))=\inf_{\varepsilon>0}\hbox{d}_+^{\rm{exp}}(\mathcal E(\alpha,\varepsilon)).
\end{array}
\end{equation}
If $\mathcal E(\alpha)$ is not finite, one has $\displaystyle\lim_{n\in\mathcal E(\alpha),\ n\to\infty}\frac{\log f(n)}{\log n}=\alpha$ and one says that $\mathcal E(\alpha)$ is a level set associated to $\alpha$. One also says that $\hbox{d}_-^{\rm{exp}}(\mathcal E(\cdot))$ and $\hbox{d}_+^{\rm{exp}}(\mathcal E(\cdot))$ are \underline{the} density spectrums of $f$, because they don't depend on the construction of $\mathcal E(\alpha)$ and depend only on the exponential densities of the sets $\mathcal E(\alpha,\varepsilon)$.
\end{thm}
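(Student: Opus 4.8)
First I would dispose of the purely formal ``$\lim_{\varepsilon\to0}=\inf_{\varepsilon>0}$'' identities and fix notation. Put $E_k:=\mathcal E(\alpha,1/k)$ and $g_k(N):=\#\big(E_k\cap[1,N)\big)$; since the interval $[\alpha-1/k,\alpha+1/k]$ shrinks with $k$ the sets decrease, $E_1\supseteq E_2\supseteq\cdots$, hence for each fixed $N$ both $g_k(N)$ and $\log g_k(N)$ are non-increasing in $k$. Consequently each of the four functionals $\hbox{d}_-,\hbox{d}_+,\hbox{d}_-^{\rm{exp}},\hbox{d}_+^{\rm{exp}}$ applied to $E_k$ is non-increasing in $k$, so it converges to its infimum over $k$; and because $\{1/k\}$ is cofinal in $(0,\infty)$ and the same functionals are monotone in $\varepsilon$ along $\mathcal E(\alpha,\varepsilon)$, that infimum over $k$ coincides with $\inf_{\varepsilon>0}$ and with $\lim_{\varepsilon\to0}$ of the functional along $\mathcal E(\alpha,\varepsilon)$. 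Call the four common values $\delta_-,\delta_+,\delta_-^{\rm{exp}},\delta_+^{\rm{exp}}$. It then remains to exhibit a strictly increasing sequence $1=N_1<N_2<\cdots$ for which $\hbox{d}_-(\mathcal E(\alpha))=\delta_-$ and likewise for the other three densities. If some $E_k$ is finite, all four $\delta$'s vanish and $\mathcal E(\alpha)$ is then finite for any such sequence, so its densities vanish too; hence I would assume from now on that every $E_k$ is infinite, so that $g_k(N)\to\infty$.

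Next I would record two estimates that hold for \emph{any} strictly increasing $(N_k)$ with $N_1=1$. (Upper.) For $n\ge N_j$ the block $[N_k,N_{k+1})$ containing $n$ has $k\ge j$, so $n\in\mathcal E(\alpha)$ forces $n\in E_k\subseteq E_j$; therefore $\#\big(\mathcal E(\alpha)\cap[1,N)\big)\le g_j(N)+N_j$ for all $N$. Dividing by $N$, and separately by $\log N$ (using $g_j(N)\to\infty$ to absorb the $+N_j$), and passing to $\liminf$ and $\limsup$, this gives $\hbox{d}_\pm(\mathcal E(\alpha))\le\hbox{d}_\pm(E_j)$ and $\hbox{d}_\pm^{\rm{exp}}(\mathcal E(\alpha))\le\hbox{d}_\pm^{\rm{exp}}(E_j)$ for every $j$, hence $\le\delta_-,\delta_+,\delta_-^{\rm{exp}},\delta_+^{\rm{exp}}$ respectively. (Lower.) For $N\in[N_m,N_{m+1})$ the finished blocks $[N_k,N_{k+1})$ with $k<m$ carry $E_k\supseteq E_m$, so inside $[1,N)$ the set $\mathcal E(\alpha)$ contains at least $E_m\cap[1,N)$; thus $\#\big(\mathcal E(\alpha)\cap[1,N)\big)\ge g_m(N)$ whenever $N\in[N_m,N_{m+1})$. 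So the upper bounds come for free and the whole task is to choose $(N_k)$ making the lower estimate strong enough.

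Then I would build $(N_k)$ recursively. Set $N_1:=1$; having chosen $N_1<\cdots<N_m$, use $\hbox{d}_-(E_{m+1})\ge\delta_-$ and $\hbox{d}_-^{\rm{exp}}(E_{m+1})\ge\delta_-^{\rm{exp}}$ to pick $T_{m+1}$ with $g_{m+1}(N)/N>\delta_- -\frac1{m+1}$ and $\log g_{m+1}(N)/\log N>\delta_-^{\rm{exp}} -\frac1{m+1}$ for all $N\ge T_{m+1}$; use $\hbox{d}_+(E_m)\ge\delta_+$ and $\hbox{d}_+^{\rm{exp}}(E_m)\ge\delta_+^{\rm{exp}}$ to pick $N_m^\ast\ge N_m$ with $g_m(N_m^\ast)/N_m^\ast>\delta_+ -\frac1m$ and $N_m^{\ast\ast}\ge N_m$ with $\log g_m(N_m^{\ast\ast})/\log N_m^{\ast\ast}>\delta_+^{\rm{exp}} -\frac1m$; and pick any $e_m\in E_m$ with $e_m\ge N_m$. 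Then set $N_{m+1}:=\max\{N_m+1,\ T_{m+1},\ N_m^\ast+1,\ N_m^{\ast\ast}+1,\ e_m+1\}$. The one point I expect to be the actual crux is the ordering of this recursion: $N_{m+1}$ must already be taken past the liminf-threshold $T_{m+1}$ of the \emph{next} set $E_{m+1}$, which is legitimate precisely because $E_{m+1}=\mathcal E(\alpha,1/(m+1))$ is a fixed explicitly given set, not something built along the way. This ``look-ahead'' ensures that as soon as the block structure switches to $E_{m+1}$ on $[N_{m+1},N_{m+2})$ the lower estimate is already effective there; everything else is bookkeeping with the monotone families $\{g_k\}$.

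Finally I would verify. For $N\in[N_m,N_{m+1})$ with $m\ge2$ we have $N\ge N_m\ge T_m$, so the lower estimate together with the defining property of $T_m$ gives $\#\big(\mathcal E(\alpha)\cap[1,N)\big)\ge g_m(N)>(\delta_- -\frac1m)N$ and $\log\#\big(\mathcal E(\alpha)\cap[1,N)\big)/\log N>\delta_-^{\rm{exp}} -\frac1m$; letting $N\to\infty$, so that $m\to\infty$, yields $\hbox{d}_-(\mathcal E(\alpha))\ge\delta_-$ and $\hbox{d}_-^{\rm{exp}}(\mathcal E(\alpha))\ge\delta_-^{\rm{exp}}$. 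Since $N_m^\ast,N_m^{\ast\ast}\in[N_m,N_{m+1})$, the lower estimate at those points gives $\#\big(\mathcal E(\alpha)\cap[1,N_m^\ast)\big)\ge g_m(N_m^\ast)>(\delta_+ -\frac1m)N_m^\ast$ and the analogue at $N_m^{\ast\ast}$; as $N_m^\ast,N_m^{\ast\ast}\to\infty$ this yields $\hbox{d}_+(\mathcal E(\alpha))\ge\delta_+$ and $\hbox{d}_+^{\rm{exp}}(\mathcal E(\alpha))\ge\delta_+^{\rm{exp}}$. Combined with the free upper bounds, all four equalities of \eqref{fourdensities} follow, and since $\delta_-^{\rm{exp}},\delta_+^{\rm{exp}}$ were defined without reference to $(N_k)$ the last sentence of the statement is immediate. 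Lastly $e_m\in\mathcal E(\alpha)$ for each $m$ so $\mathcal E(\alpha)$ is infinite, and for $n\in\mathcal E(\alpha)\cap[N_m,N_{m+1})$ one has $n\in E_m$, i.e.\ $\big|\frac{\log f(n)}{\log n}-\alpha\big|\le\frac1m$, with $m\to\infty$ as $n\to\infty$; hence $\lim_{n\in\mathcal E(\alpha),\ n\to\infty}\frac{\log f(n)}{\log n}=\alpha$.
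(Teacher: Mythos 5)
Your proof is correct, and it follows the same broad strategy as the paper (a greedy recursive choice of $N_{k+1}$ with ``look-ahead'' thresholds for $E_{k+1}=\mathcal E(\alpha,\tfrac1{k+1})$), but the bookkeeping is organized differently in a way worth noting. The paper first proves a general lemma (Lemma \ref{glem}) valid for any monotone sequence of sets, non-increasing or non-decreasing, and there it must secure both upper and lower bounds on all four densities; since the conditions ``$\#E''_k\cap[1,N_{k+1})$ is small'' and ``$\#E'_k\cap[1,N_{k+1})$ is large'' can conflict at a single point, it distributes the four pointwise requirements over residues of $k$ mod $4$, attaining each density along a separate subsequence of the $N_{k+1}$. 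You instead exploit the specific non-increasing structure to observe that the inequalities $\hbox{d}_\pm(\mathcal E(\alpha))\le\hbox{d}_\pm(E_j)$ and $\hbox{d}_\pm^{\rm{exp}}(\mathcal E(\alpha))\le\hbox{d}_\pm^{\rm{exp}}(E_j)$ hold for \emph{every} choice of $1=N_1<N_2<\cdots$ (via $\#(\mathcal E(\alpha)\cap[1,N))\le \#(E_j\cap[1,N))+N_j$), so only lower bounds need to be engineered; these are all of the same sign and can be imposed simultaneously (the liminf bounds throughout each block via the thresholds $T_m$, the limsup bounds at the designated points $N_m^\ast,N_m^{\ast\ast}$ inside the block), which eliminates the mod-$4$ device. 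The price is that your argument is tied to the non-increasing case, whereas the paper's lemma also covers non-decreasing families; the gain is a shorter and more transparent verification. Your treatment of the ancillary points --- monotonicity giving $\lim_{\varepsilon\to0}=\inf_{\varepsilon>0}$, the degenerate case where some $E_k$ is finite, the insertion of the witnesses $e_m$ to guarantee $\mathcal E(\alpha)$ is infinite, and the final limit $\lim_{n\in\mathcal E(\alpha)}\frac{\log f(n)}{\log n}=\alpha$ --- is complete and in fact more explicit than the paper's.
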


The second subsection, independent on the first, will be used later to prove the theorem.

\subsection{A general lemma about the subsets of $\mathbb N$}

Given a monotonic sequence $(E_k)_k$ of subsets of $\mathbb N$, let us construct a subset $E$ whose densities are the limits of the ones of $E_k$.

\begin{lem}\label{glem}
Let $(E_k)_k$ be a sequence of subsets of $\mathbb N$, non-increasing or non-decreasing for the inclusion. There exist  some integers $1=N_1<N_2<\dots$ such that the set
\begin{equation}\label{Ecup}
E=\bigcup_{k\in\mathbb N}\big(E_k\cap[N_k,N_{k+1})\big)
\end{equation}
has densities
\begin{equation}\label{densi}
\begin{array}{l}
\displaystyle\hbox{d}_-(E)=\ell_-:=\lim_{k\to\infty}\hbox{d}_-(E_k),\quad\hbox{d}_-^{\rm{exp}}(E)=\ell_-^{\rm{exp}}:=\lim_{k\to\infty}\hbox{d}_-^{\rm{exp}}(E_k),\\\displaystyle\hbox{d}_+(E)=\ell_+:=\lim_{k\to\infty}\hbox{d}_+(E_k),\quad\hbox{d}_+^{\rm{exp}}(E)=\ell_+^{\rm{exp}}:=\lim_{k\to\infty}\hbox{d}_+^{\rm{exp}}(E_k).
\end{array}
\end{equation}
\end{lem}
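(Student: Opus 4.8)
The plan is to build the integers $N_k$ one at a time, letting $E$ agree with $E_k$ on the $k$-th block $[N_k,N_{k+1})$ and making the blocks grow so fast that the asymptotics of $E$ at a scale $N$ are governed by the single $E_k$ with $N\in[N_k,N_{k+1})$. Write $c(N)=\#(E\cap[1,N))$ and $c_k(N)=\#(E_k\cap[1,N))$. Two elementary remarks organize the argument. First, if $(E_k)$ is monotone then $c_k(N)$ is monotone in $k$ for every $N$, so each of $\mathrm d_\pm(E_k)$ and $\mathrm d^{\rm exp}_\pm(E_k)$ is monotone in $k$ and, being bounded, converges; this is why $\ell_\pm$ and $\ell^{\rm exp}_\pm$ are well defined. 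Second, since the intervals $[N_k,N_{k+1})$ partition $\mathbb N$, for $N\in[N_k,N_{k+1}]$ one has the exact identity $c(N)=c_k(N)-\delta_k$ when $(E_k)$ is non-decreasing and $c(N)=c_k(N)+\delta_k$ when it is non-increasing, where $\delta_k:=|c(N_k)-c_k(N_k)|\ge0$ (the sign coming from $c(N_k)\le c_k(N_k)$ in the first case, the reverse in the second), $\delta_1=0$, and $\delta_k=\delta_{k-1}+|c_{k-1}(N_k)-c_k(N_k)|$. So everything reduces to keeping $\delta_k$ negligible, both on the natural scale ($\delta_k=o(N_k)$) and on the logarithmic one ($\log\delta_k\le(\ell^{\rm exp}_-+o(1))\log N_k$).

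At stage $k$ I choose $N_k$ subject to: (a) $N_k$ larger than $N_{k-1}$, than $\delta_{k-1}^{\,k}$, than $k^k$, and than a bound past which $c_j(N)/N\ge\mathrm d_-(E_j)-1/k$, $c_j(N)/N\le\mathrm d_+(E_j)+1/k$ and likewise for $\log c_j(N)/\log N$, for $j\in\{k-1,k\}$; and (b), writing $F=E_{k-1}\cup E_k$ (the larger of the two), $N_k$ a point where $c_F(N_k)/N_k\le\mathrm d_-(F)+1/k$ and $\log c_F(N_k)/\log N_k\le\mathrm d^{\rm exp}_-(F)+1/k$. Condition (b) is compatible with (a): if $\mathrm d_-(F)>0$ then $c_F(N)\ge\tfrac12\mathrm d_-(F)N$ eventually, forcing $\mathrm d^{\rm exp}_-(F)=1$, so the logarithmic half of (b) is vacuous; if $\mathrm d_-(F)=0$ and $\mathrm d^{\rm exp}_-(F)<1$ then at a point where $\log c_F/\log N\le\mathrm d^{\rm exp}_-(F)+\tfrac1k$ one has $c_F(N)/N\le N^{\mathrm d^{\rm exp}_-(F)+1/k-1}\to0$, so the natural half is automatic there; and when $\mathrm d^{\rm exp}_-(F)=1$ only the natural half restricts. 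Comparing such an $N_k$ with the threshold for the smaller of $E_{k-1},E_k$ gives $|c_{k-1}(N_k)-c_k(N_k)|\le(|\mathrm d_-(E_k)-\mathrm d_-(E_{k-1})|+2/k)N_k$ and $|c_{k-1}(N_k)-c_k(N_k)|\le c_F(N_k)\le N_k^{\mathrm d^{\rm exp}_-(F)+1/k}$; since $|\mathrm d_-(E_k)-\mathrm d_-(E_{k-1})|\to0$ and $\mathrm d^{\rm exp}_-(F)=\ell^{\rm exp}_-+o(1)$, and the $N_k$ grow fast, these propagate to $\delta_k=o(N_k)$ and $\log\delta_k\le(\ell^{\rm exp}_-+o(1))\log N_k$.

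Given these two bounds on $\delta_k$, the eight inequalities defining the four densities of $E$ follow. Consider the four whose direction matches the identity (for the non-decreasing case: $\mathrm d_+(E)\le\ell_+$, $\mathrm d_-(E)\ge\ell_-$, $\mathrm d^{\rm exp}_+(E)\le\ell^{\rm exp}_+$, $\mathrm d^{\rm exp}_-(E)\ge\ell^{\rm exp}_-$; dually otherwise): three of them drop out directly on block $k$ from $N\ge N_k$ and $\delta_k=o(N_k)$, the logarithmic ones also using $\log(a+b)\le\log2+\max(\log a,\log b)$ together with $\log\delta_k\le(\ell^{\rm exp}_-+o(1))\log N_k\le(\ell^{\rm exp}_++o(1))\log N_k$ (in the non-increasing case all four drop out this way). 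The other four are obtained by planting, far inside each block --- legitimate, since once $N_k$ is fixed one may still take $N_{k+1}$ as large as one wishes --- four points: one where $c_k(N)/N$ is within $1/k$ of $\mathrm d_-(E_k)$, one within $1/k$ of $\mathrm d_+(E_k)$, and the two analogues for $\log c_k(N)/\log N$. At a point $N$ deep in block $k$ one has $\delta_k=o(N)$ and, $c_k(N)$ being unbounded, $\delta_k=o(c_k(N))$, so there $c(N)$ has the same natural, resp.\ logarithmic, density as $c_k(N)$; letting $k\to\infty$ and using $\mathrm d_\pm(E_k)\to\ell_\pm$, $\mathrm d^{\rm exp}_\pm(E_k)\to\ell^{\rm exp}_\pm$ yields the missing four. (A finite $E_k$ has logarithmic densities $0$, and such stages are handled trivially.)

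The one genuinely delicate step --- the only place where the care in (b) is really needed --- is the remaining inequality $\mathrm d^{\rm exp}_-(E)\ge\ell^{\rm exp}_-$ in the non-decreasing case, because there $c(N)=c_k(N)-\delta_k$ can suffer strong cancellation when $c_k(N)$ is not much larger than $\delta_k$, so $\log c(N)$ need not be close to $\log c_k(N)$ for $N$ near $N_k$. Split block $k$ at $M_k:=\min\{N:N^{\mathrm d^{\rm exp}_-(E_k)-1/k}\ge2\delta_k\}$: for $N\ge M_k$ the identity gives $c(N)\ge\tfrac12N^{\mathrm d^{\rm exp}_-(E_k)-1/k}$; for $N_k\le N\le M_k$ one discards the subtracted term and uses only $c(N)\ge c(N_k)\ge\tfrac12c_{k-1}(N_k)\ge\tfrac12N_k^{\mathrm d^{\rm exp}_-(E_{k-1})-1/(k-1)}$, together with $\log M_k/\log N_k\to1$ --- which is exactly what $\log\delta_k\le(\ell^{\rm exp}_-+o(1))\log N_k$ secures --- so $\log c(N)/\log N\ge\ell^{\rm exp}_--o(1)$ on the whole block. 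The non-increasing case involves no cancellation and needs no splitting; its natural-density part may alternatively be deduced from the non-decreasing case applied to the complements $\mathbb N\setminus E_k$ (the construction commutes with complementation and $\mathrm d_\mp(S)=1-\mathrm d_\pm(\mathbb N\setminus S)$), but the exponential densities must be argued directly. I expect this transition-zone estimate to be the main obstacle; the rest is the bookkeeping of ``take $N_k$ large enough''.
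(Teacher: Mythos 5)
Your argument is correct, and I could not find a genuine gap: the exact identity $c(N)=c_k(N)\mp\delta_k$ on $[N_k,N_{k+1}]$ holds, your conditions (a) and (b) are compatible exactly for the reasons you give, the recursion $\delta_k=\delta_{k-1}+|c_{k-1}(N_k)-c_k(N_k)|$ together with $N_k\ge\delta_{k-1}^k$ does yield $\delta_k=o(N_k)$ and $\log\delta_k\le(\ell_-^{\rm exp}+o(1))\log N_k$, and the splitting of block $k$ at $M_k$ handles the one real cancellation problem (the deferred edge cases $\ell_-^{\rm exp}=0$ and finite $E_k$ are indeed trivial). The paper proves the lemma by the same inductive ``take $N_{k+1}$ huge'' scheme but with a lighter error control that avoids both your condition (b) and the transition-zone analysis. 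It introduces $E'_k:=E_k\cap[N_k,\infty)$ and $E''_k:=E_k\cup[1,N_k)$, notes that deleting or adjoining the initial segment $[1,N_k)$ changes none of the four densities of $E_k$, and uses the sandwich (say in the non-increasing case) $E_{k+1}\cap[1,N)\subset E\cap[1,N)\subset E''_k\cap[1,N)$ valid for $N$ in the \emph{next} block $(N_{k+1},N_{k+2}]$; since there $N\ge N_{k+1}\gg N_k$, the discrepancy --- crudely at most $N_k$ --- is absorbed automatically by the ``for all $N\ge N_{k+1}$'' form of the density estimates, with no need to place $N_k$ at a near-liminf point of $c_{E_{k-1}\cup E_k}$. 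Your ``planted points'' correspond to the paper's supplementary conditions imposed on $N_{k+1}$ according to $k$ mod $4$, one residue class per density. The trade-off: the paper's one-block lag makes every one of the eight inequalities routine (in particular $\hbox{d}_-^{\rm exp}(E)\ge\ell_-^{\rm exp}$ in the non-decreasing case follows directly from $E'_k\cap[1,N)\subset E\cap[1,N)$ with no cancellation to worry about), whereas your version gives sharper, block-by-block information ($E$ literally agrees with $E_k$ up to an additive constant $o(N_k)$ on its own block) at the cost of the delicate choice of $N_k$ and the $M_k$-splitting.
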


\begin{proof}Let us define the integers $1=N_1<N_2<\dots$ by induction: we suppose that we know the value of $N_k$ for some $k$, and we search $N_{k+1}$ large enough in view to obtain (\ref{densi}). We have $E'_k\subset E_k\subset E''_k$ with
$$
E'_k:=E_k\cap[N_k,\infty)\hbox{ and }E''_k:=E_k\cup[1,N_k),
$$
and $E'_k,E''_k$ have same densities as $E_k$. So, by definition of "limitinf" and "limitsup", we can chose $N_{k+1}$ such that, for all $N\ge N_{k+1}$,
\begin{equation}\label{condi}
\begin{array}{l}
N\big(\hbox{d}_-(E_k)-\frac1k\big)\le\#E'_k\cap[1,N)\le\#E''_k\cap[1,N)\le N\big(\hbox{d}_+(E_k)+\frac1k\big),\\
N^{\hbox{$\scriptstyle\rm d$}_-^{\rm{exp}}(E_k)-\frac1k}\le\#E'_k\cap[1,N)\le\#E''_k\cap[1,N)\le N^{\hbox{$\scriptstyle\rm d$}_+^{\rm{exp}}(E_k)+\frac1k}\\
N\big(\hbox{d}_-(E_{k+1})-\frac1k\big)\le\#E_{k+1}\cap[1,N)\le N\big(\hbox{d}_+(E_{k+1})+\frac1k\big),\\
N^{\hbox{$\scriptstyle\rm d$}_-^{\rm{exp}}(E_{k+1})-\frac1k}\le\#E_{k+1}\cap[1,N)\le N^{\hbox{$\scriptstyle\rm d$}_+^{\rm{exp}}(E_{k+1})+\frac1k}.
\end{array}
\end{equation}
Using again the definition of "limitinf" and "limitsup", we can impose a supplementary conditions to $N_{k+1}$, according to the value of $k$ mod.~$4$:
\begin{equation}\label{condisuppl}
\begin{array}{ll}
\hbox{if }k\equiv0\hbox{ mod. }4,&\#E''_k\cap[1,N_{k+1})\le N_{k+1}\big(\hbox{d}_-(E_k)+\frac1k\big)\\
\hbox{if }k\equiv1\hbox{ mod. }4,&\#E''_k\cap[1,N_{k+1})\le N_{k+1}^{\hbox{$\scriptstyle\rm d$}_-^{\rm{exp}}(E_k)+\frac1k}\\
\hbox{if }k\equiv2\hbox{ mod. }4,&N_{k+1}\big(\hbox{d}_+(E_k)-\frac1k\big)\le\#E'_k\cap[1,N_{k+1})\\
\hbox{if }k\equiv3\hbox{ mod. }4,&N_{k+1}^{\hbox{$\scriptstyle\rm d$}_+^{\rm{exp}}(E_k)-\frac1k}\le\#E'_k\cap[1,N_{k+1}).\end{array}
\end{equation}
Since
$$
E'_k\cap[1,N_{k+1})\subset E\cap[1,N_{k+1})\subset E''_k\cap[1,N_{k+1})
$$
we deduce from (\ref{condi}) (applied to $N=N_{k+1}$) and (\ref{condisuppl}) that, according to the value of $k$ mod.~$4$,
\begin{equation}\label{finalimits}
\begin{array}{l}
\displaystyle\lim_{k\equiv0\atop k\to\infty}\frac{\#E\cap[1,N_{k+1})}{N_{k+1}}=\ell_-,\ \lim_{k\equiv1\atop k\to\infty}\frac{\log\big(\#E\cap[1,N_{k+1})\big)}{\log N_{k+1}}=\ell_-^{\rm{exp}},\\\displaystyle\lim_{k\equiv2\atop k\to\infty}\frac{\#E\cap[1,N_{k+1})}N_{k+1}=\ell_+,\ \lim_{k\equiv3\atop k\to\infty}\frac{\log\big(\#E\cap[1,N_{k+1})\big)}{\log N_{k+1}}=\ell_+^{\rm{exp}}.\end{array}
\end{equation}
For any $k\in\mathbb N$ and $N_{k+1}<N\le N_{k+2}$ we have the following inclusions, if the sequence of sets $(E_k)_k$ is non-increasing:
\begin{equation}\label{inclu1}
E_{k+1}\cap[1,N)\subset E\cap[1,N)\subset E''_k\cap[1,N)
\end{equation}
while, if the sequence of sets $(E_k)_k$ is non-decreasing:
\begin{equation}\label{inclu2}
E'_k\cap[1,N)\subset E\cap[1,N)\subset E_{k+1}\cap[1,N).
\end{equation}
Now(\ref{densi}) follows from (\ref{inclu1}), (\ref{inclu2}), (\ref{condi}) and (\ref{finalimits}).
\end{proof}

\begin{rem}
If one remove the hypothesis that $(E_k)_k$ is monotonic, it may happen that the lower (resp. upper) densities of the set $E$ defined by (\ref{Ecup}) are smaller (resp. larger) that the limitinf (resp. the limitsup) of the corresponding densities of $E_k$, even if the sequence $(N_k)_k$ increases quickly, so the method used for the proof of Lemma \ref{glem} do not apply. Suppose for instance that, for some positive integer $\kappa$, the set $E_1=\dots=E_{\kappa-1}$ has lower density $1/2$ and more precisely
$$
\#\big(E\cap[1,N_\kappa)\big)=\#\big(E_1\cap[1,N_\kappa)\big)=\lfloor N_\kappa/2\rfloor.
$$
Suppose that the set $E_\kappa$ is distinct from $E_1$ but has also lower density $1/2$, with for instance $E_\kappa\cap[1,2N_\kappa)=[1,N_\kappa)$. Then if $N_{\kappa+1}\ge2N_\kappa$, the integer $N=2N_\kappa$ satisfy
$$
\#\big(E\cap[1,N)\big)=\#\big(E_1\cap[1,N_\kappa)\big)=\lfloor N/4\rfloor.
$$
About the lower exponential density, suppose now that $E_1=\dots=E_{\kappa-1}$ have lower exponential density $1/2$, more precisely
$$
\#\big(E\cap[1,N_\kappa)\big)=\#\big(E_1\cap[1,N_\kappa)\big)=\lfloor{N_\kappa}^{1/2}\rfloor.
$$
Suppose that $E_\kappa$ has lower exponential density $1/2$, with for instance $E_\kappa\cap[1,{N_\kappa}^2)=[1,N_\kappa)$. Then if $N_{\kappa+1}\ge{N_\kappa}^2$, the integer $N={N_\kappa}^2$ satisfies
$$
\#\big(E\cap[1,N)\big)=\#\big(E_1\cap[1,N_\kappa)\big)=\lfloor N^{1/4}\rfloor.
$$
\end{rem}

\subsection{Proof of Theorem \ref{sets}}

\begin{proof}Lemma \ref{glem} applies to the non-increasing sequence
$$
E_k=\mathcal E(\alpha,1/k).
$$
\hfill\end{proof}

\begin{exmp}\label{triv}
Let $f(n)=n^{1+\sin n}$ then, given $\alpha\in[0,2]$ and $\varepsilon>0$, $E(\alpha,\varepsilon)$ is the set of the integers $n$ such that $1+\sin n\in[\alpha-\varepsilon,\alpha+\varepsilon]$. It has a positive usual density hence it has exponential density $1$, as well as $\mathcal E(\alpha)$.

\includegraphics[trim=0 0 150 0,scale=0.4]{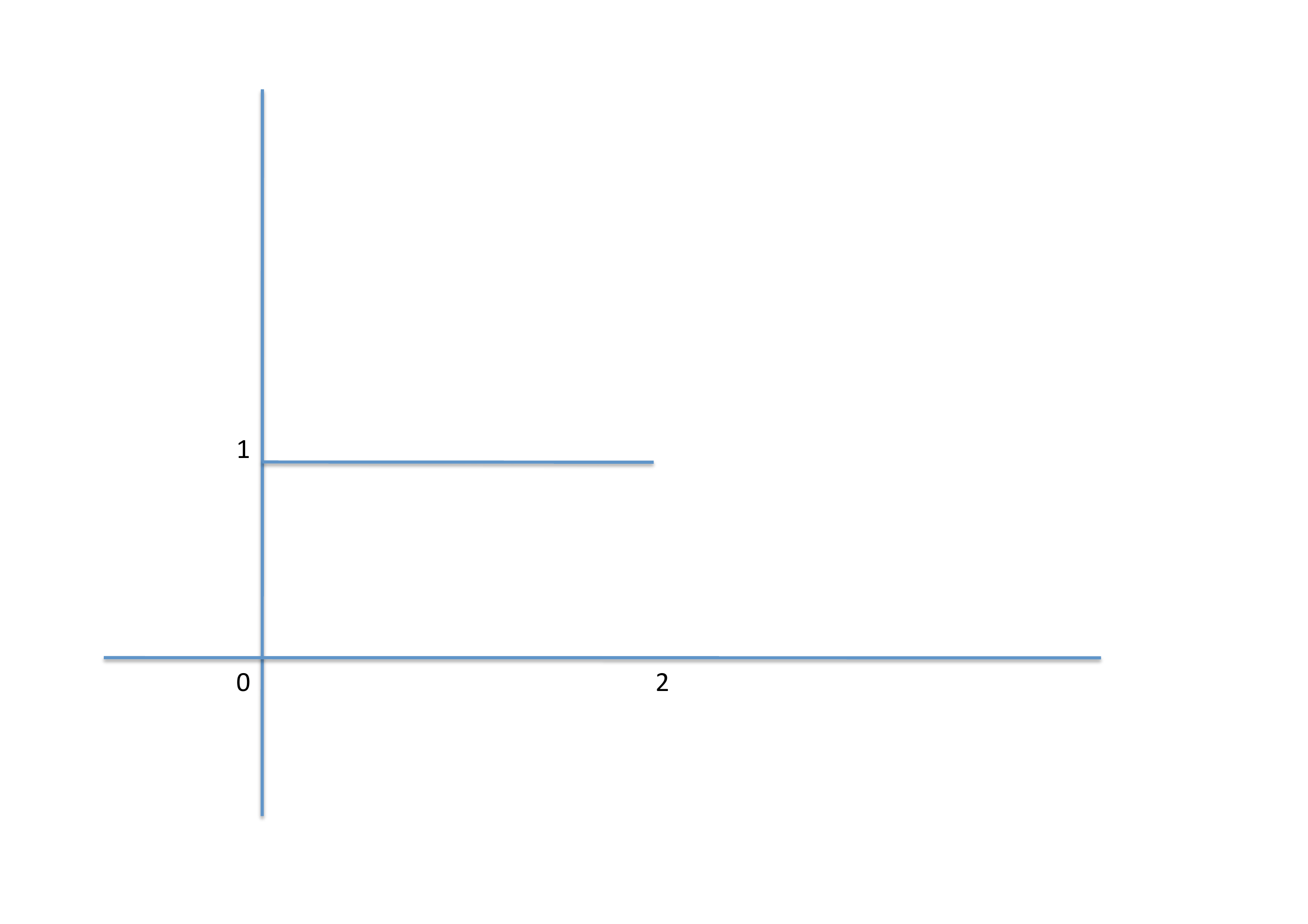}

\centerline{{\sc Figure 1.}\ \it Exponential density of $\mathcal E(\alpha)$ in function of $\alpha$ in Example \ref{triv}.}

\end{exmp}

\begin{exmp}\label{nontriv}
The number of representations of $n$ in base~$2$ with digit in $\{0,1,2\}$, defined by
\begin{equation}\label{base2}
f(n):\#\Big\{(\omega_i)_{i\ge0}\;:\;n=\sum_{i=0}^\infty\omega_i2^i,\ \omega_i\in\{0,1,2\}\Big\},
\end{equation}
has a polynomial rate of growth: (\ref{pol}) holds with $\alpha_1=0$ (because, for any~$k$, $f(2^k-1)=1$) and $\alpha_2=\log\frac{1+\sqrt5}2/\log2$ (see \cite[Corollary 6.10]{D}). By \cite[Theorem 19]{FLT} there exists a subset $S\subset\mathbb N$ of natural density $1$ -- and consequently exponential density $1$ -- such that the limit $\displaystyle\alpha_0:=\lim_{n\in S,\ n\to\infty}\frac{\log f(n)}{\log n}$ exists and is positive. Since $S\cap[N,\infty)\subset\mathcal E(\alpha_0,\varepsilon)$ for any $\varepsilon>0$ and for $N$ large enough, one has also
$$
{d}_\pm(\mathcal E(\alpha_0))={d}_\pm^{\rm{exp}}(\mathcal E(\alpha_0))=1.
$$
Nevertheless, let us deduce from \cite[Corollary 32]{FLT} that
\begin{equation}\label{alpha3}
\exists\alpha_3>\alpha_0,\ {d}_+^{\rm{exp}}(\mathcal E(\alpha_3))>0.
\end{equation}
From \cite[Corollary 32]{FLT}, for any $\alpha_0<\alpha'_0<\frac{\log3}{\log2}-1$ and $0<\beta'_0<\frac{\log3}{\log2}-1$, one has for $N$ large enough
\begin{equation}\label{ge}
\#\Big\{n<N\;;\;\frac{\log f(n)}{\log n}>\alpha'_0\Big\}\ge N^{\beta'_0}
\end{equation}
To prove (\ref{alpha3}) by contraposition, suppose that $\hbox{d}_+^{\rm{exp}}(\mathcal E(\alpha))<\beta'_0$ for any $\alpha\ge\alpha'_0$. Then by definition of $\mathcal E(\alpha)$, for each $\alpha\ge\alpha'_0$ there exists $\varepsilon_\alpha>0$ such that $\hbox{d}_+^{\rm{exp}}(\mathcal E(\alpha,\varepsilon_\alpha))<\beta'_0$. There exists $\beta_\alpha<\beta'_0$ such that, for $N$ large enough,
$$
\#\Big\{n<N\;;\;\alpha-\varepsilon_\alpha\le\frac{\log f(n)}{\log n}\le\alpha+\varepsilon_\alpha\Big\}\le N^{\beta_\alpha}.
$$
The open intervals $(\alpha-\varepsilon_\alpha,\alpha+\varepsilon_\alpha)$ cover the compact set $[\alpha'_0,3]$, so there exists a subcover of $[\alpha'_0,3]$ by $n_0$ intervals of this form. Let $\beta$ be the maximum of $\beta_\alpha$ for $\alpha$ being the center of such an interval. We have
\begin{equation}\label{le}
\#\Big\{n<N\;;\;\alpha'_0\le\frac{\log f(n)}{\log n}\le3\Big\}\le n_0N^{\beta}\quad(N\hbox{ large enough}).
\end{equation}
Since one check easily that $\frac{\log f(n)}{\log n}\le3$ for any $n\in\mathbb N$, (\ref{le}) is in contradiction with (\ref{ge}), hence (\ref{alpha3}) holds and more precisely there exists $\alpha_3\ge\alpha'_0$ and $\beta_3\ge\beta'_0$ such that $\hbox{d}_+^{\rm{exp}}(\mathcal E(\alpha_3))=\beta_3$.
\end{exmp}

\includegraphics[trim=0 65 150 0,clip,scale=0.45]{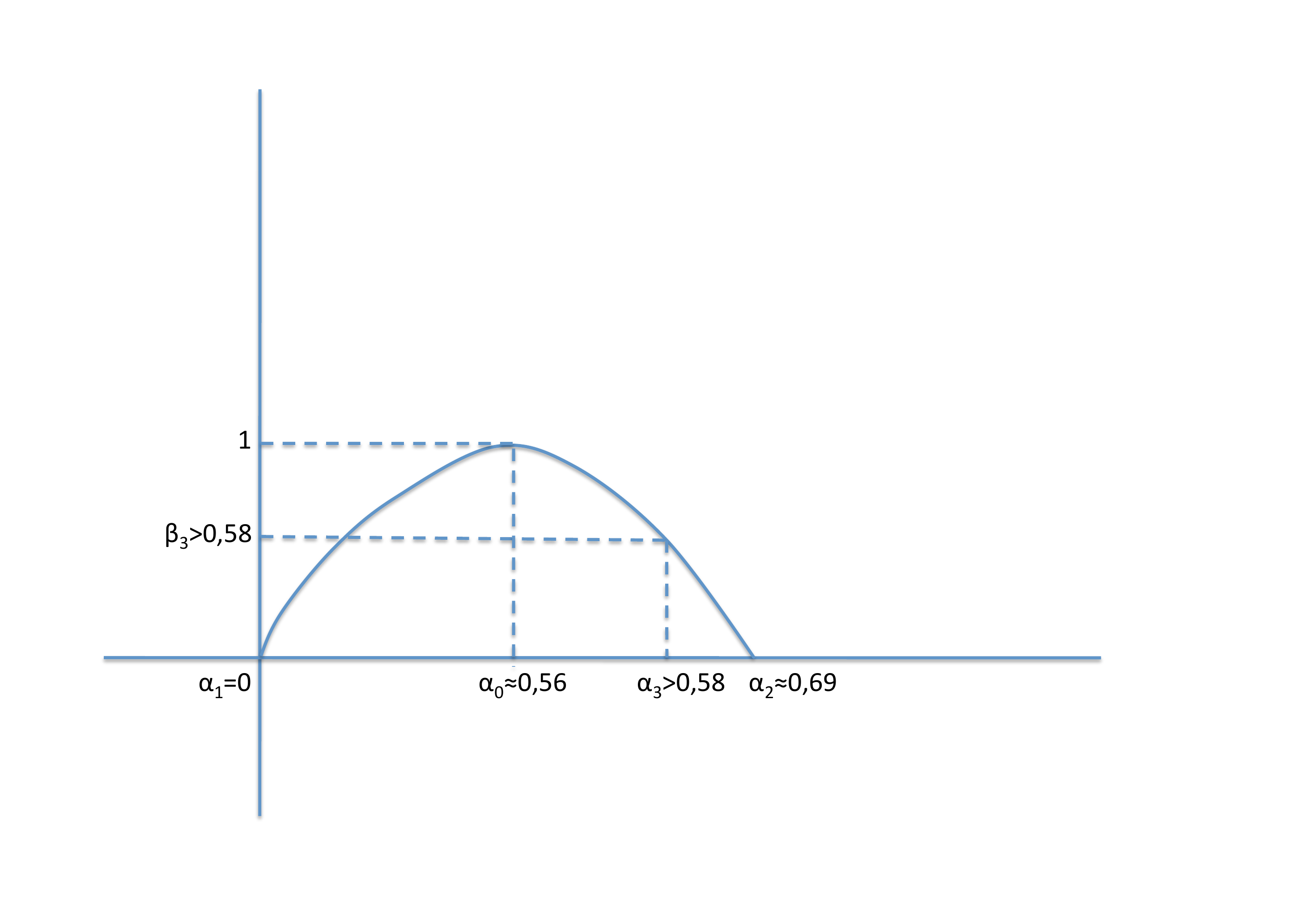}

\centerline{{\sc Figure 2.}\ \it Upper exponential density of $\mathcal E(\alpha)$ in function of $\alpha$ in Example \ref{nontriv}.}

\hskip50pt\includegraphics[trim=0 350 150 0,clip,scale=0.45]{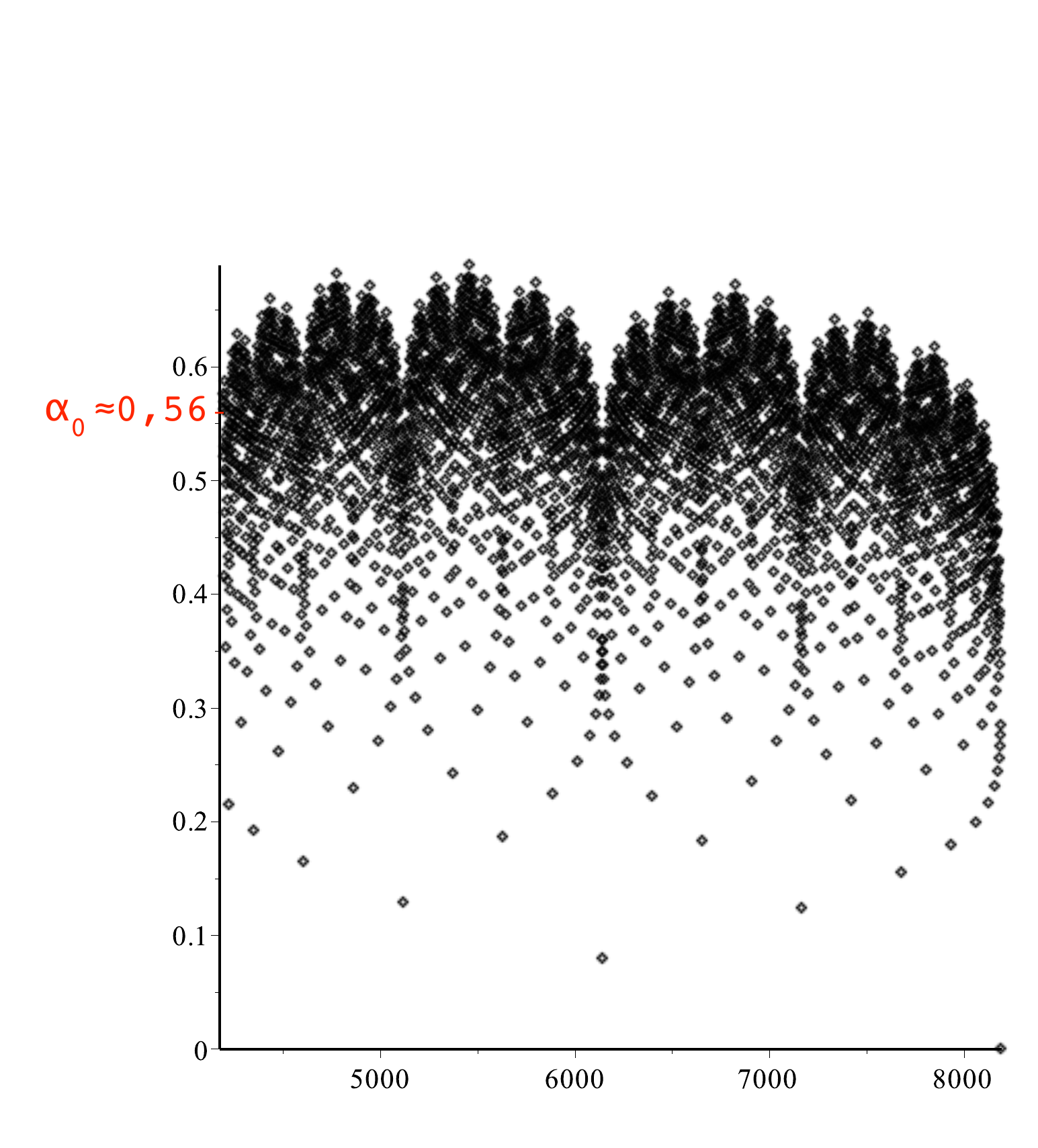}
 
\centerline{{\sc Figure 3.}\ \it The values of $\frac{\log f(n)}{\log n}$ for $n\in[2^{12},2^{13})$ in Example \ref{nontriv}.}


\section{Relation between singularity spectrum and density spectrum}\label{singandens}

Let $\eta$ be a probability measure on $\{0,1,\dots,b-1\}^{\mathbb N}$, the level sets $E(\alpha)$ are defined by
\begin{equation}\label{symbolicdef}
E(\alpha):=\Big\{(\omega_n)_{n\in\mathbb N}\;:\;\lim_{k\to\infty}\frac{\log\eta[\omega_1\dots\omega_k]}{\log(1/b^k)}=\alpha\Big\}
\end{equation}
and one calls the singularity spectrum, the map
$$
\alpha\mapsto\hbox{H-dim}(E(\alpha))
$$
where H-dim is the Hausdorff dimension, on the understanding that the distance between two sequences $(\omega_n)_{n\in\mathbb N}$ and $(\omega'_n)_{n\in\mathbb N}$ is $b^{1-\inf\{i\;:\;\omega_i\ne\omega'_i\}}$. Any ball is a cylinder set and the diameter of each cylinder set is
$$
\delta([\omega_1\dots\omega_k]):=b^{-k}.
$$

It is natural to associate to $\eta$ the function $f_\eta:\mathbb N\cup\{0\}\to[0,1]$ defined as follows from the expansion of $n$ in base $b$:
$$
f_\eta(n):=\eta[\omega_1\dots\omega_k]\quad\hbox{for any}\quad n=_{_b}\omega_1b^{k-1}+\dots+\omega_kb^0
$$
where the notation $=_{_b}$ means that $\forall i,\ \omega_i\in\{0,1,\dots,b-1\}$ and $\omega_1\ne0$. Notice that $f_\eta$ depends only on the restriction of $\eta$ to the set of the sequences $(\omega_n)_{n\in\mathbb N}$ such that $\omega_1\ne0$.

\begin{pro}\label{twolevel}
Let $\eta$ be a probability measure on $\{0,1,\dots,b-1\}^{\mathbb N}$ such that $\lim_{n\to\infty}\frac{\log\eta[0\omega_1\dots\omega_n]}{\log\eta[\omega_1\dots\omega_n]}=1$ for any $(\omega_n)_{n\in\mathbb N}$. The level sets $E(\cdot)$ (of the measure $\eta$) and $\mathcal E(\cdot)$ (of the function $f_\eta$) satisfy the inequality
$$
\hbox{\rm H-dim}(E(\alpha))\le\hbox{d}_-^{\rm{exp}}(\mathcal E(-\alpha)).
$$
\end{pro}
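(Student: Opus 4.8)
The plan is to relate a point $(\omega_n)_{n\in\mathbb N}\in E(\alpha)$ to the integers $n_k$ whose base-$b$ expansion is $\omega_1\dots\omega_k$ (assuming WLOG that $\omega_1\ne0$, which by the hypothesis $\log\eta[0\omega_1\dots\omega_n]/\log\eta[\omega_1\dots\omega_n]\to1$ does not affect the value of $\alpha$), and to show that most of the points of $E(\alpha)$ lie, up to a Hausdorff-dimension-zero exceptional set, in a Cantor-like set built from the integers $n$ with $\frac{\log f_\eta(n)}{\log n}\approx-\alpha$. First I would record the dictionary: if $n=_b\omega_1\dots\omega_k$ then $b^{k-1}\le n<b^k$, so $\log n\sim k\log b$, and $f_\eta(n)=\eta[\omega_1\dots\omega_k]$, whence
$$
\frac{\log f_\eta(n)}{\log n}\sim\frac{\log\eta[\omega_1\dots\omega_k]}{k\log b}=-\frac{\log\eta[\omega_1\dots\omega_k]}{\log(1/b^k)}\cdot\frac{k\log b}{k\log b}=-\frac{\log\eta[\omega_1\dots\omega_k]}{\log(1/b^k)}+o(1).
$$
Thus $(\omega_n)\in E(\alpha)$ forces $\frac{\log f_\eta(n_k)}{\log n_k}\to-\alpha$, i.e. for every $\varepsilon>0$ all but finitely many $n_k$ lie in $\mathcal E(-\alpha,\varepsilon)$.

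Next I would set up the covering estimate. Fix $\varepsilon>0$ and a large $k_0$. For each $k\ge k_0$, the cylinders $[\omega_1\dots\omega_k]$ arising as prefixes of points of $E(\alpha)$ (with $\omega_1\ne0$) correspond injectively to integers $n_k\in[b^{k-1},b^k)\cap\mathcal E(-\alpha,\varepsilon)$ once $k_0$ is large enough; the number of such cylinders is therefore at most $\#\big(\mathcal E(-\alpha,\varepsilon)\cap[1,b^k)\big)$. Each such cylinder has diameter $b^{-k}$. Hence for any $s$,
$$
\mathcal H^s_{b^{-k}}\big(E(\alpha)\setminus(\text{finite part})\big)\ \le\ \#\big(\mathcal E(-\alpha,\varepsilon)\cap[1,b^k)\big)\cdot b^{-ks}.
$$
If $s>\hbox{d}_+^{\rm exp}(\mathcal E(-\alpha,\varepsilon))$ then $\#\big(\mathcal E(-\alpha,\varepsilon)\cap[1,b^k)\big)\le b^{ks'}$ for some $s'<s$ and all large $k$, so the right-hand side tends to $0$ as $k\to\infty$; therefore $\hbox{H-dim}(E(\alpha))\le\hbox{d}_+^{\rm exp}(\mathcal E(-\alpha,\varepsilon))$. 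This holds for every $\varepsilon>0$, so by Theorem~\ref{sets} (the equality $\hbox{d}_+^{\rm exp}(\mathcal E(-\alpha))=\inf_{\varepsilon>0}\hbox{d}_+^{\rm exp}(\mathcal E(-\alpha,\varepsilon))$),
$$
\hbox{H-dim}(E(\alpha))\le\inf_{\varepsilon>0}\hbox{d}_+^{\rm exp}(\mathcal E(-\alpha,\varepsilon))=\hbox{d}_+^{\rm exp}(\mathcal E(-\alpha)).
$$

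Wait — the statement claims the bound with $\hbox{d}_-^{\rm exp}$, not $\hbox{d}_+^{\rm exp}$; so the final refinement, and the step I expect to be the main obstacle, is to upgrade the liminf along the full sequence $b^{-k}$ to a genuine control by the lower exponential density. The point is that one is free to cover $E(\alpha)$ at a sparse subsequence of scales: since $\hbox{d}_-^{\rm exp}(\mathcal E(-\alpha,\varepsilon))=\liminf_{N}\frac{\log\#(\mathcal E(-\alpha,\varepsilon)\cap[1,N))}{\log N}$, there is a subsequence $k_j\to\infty$ with $\#\big(\mathcal E(-\alpha,\varepsilon)\cap[1,b^{k_j})\big)\le b^{k_j(s'+o(1))}$ for $s'$ slightly above $\hbox{d}_-^{\rm exp}(\mathcal E(-\alpha,\varepsilon))$; running the covering argument only along $k=k_j$ gives Hausdorff measure zero in dimension $s>\hbox{d}_-^{\rm exp}(\mathcal E(-\alpha,\varepsilon))$, because the $\mathcal H^s_{\delta}$-sums along $\delta=b^{-k_j}$ already tend to $0$. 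One must check that restricting to prefixes of points of $E(\alpha)$ is legitimate for a covering of $E(\alpha)$ (it is: every point of $E(\alpha)$ lies in one of its own length-$k_j$ prefix cylinders), and that discarding the finitely many "bad" prefixes at each scale does not affect the Hausdorff dimension (it does not, a finite set having dimension $0$, and the union over $j$ of the bad sets being handled by taking $k_0$ large). Letting $\varepsilon\to0$ and invoking Theorem~\ref{sets} once more yields $\hbox{H-dim}(E(\alpha))\le\hbox{d}_-^{\rm exp}(\mathcal E(-\alpha))$, as claimed.
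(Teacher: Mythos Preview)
Your overall strategy is correct --- relate prefixes of points in $E(\alpha)$ to integers in $\mathcal E(-\alpha,\varepsilon)$, count, and pass along a subsequence of scales realizing the lower exponential density --- but there is a genuine gap in the uniformity. You assert that ``for each $k\ge k_0$, the cylinders $[\omega_1\dots\omega_k]$ arising as prefixes of points of $E(\alpha)$ correspond injectively to integers $n_k\in[b^{k-1},b^k)\cap\mathcal E(-\alpha,\varepsilon)$ once $k_0$ is large enough''. This is false: the threshold beyond which $\frac{\log\eta[\omega_1\dots\omega_k]}{\log(1/b^k)}$ stays $\varepsilon$-close to $\alpha$ depends on the individual point $\omega$, and no single $k_0$ works for all of $E(\alpha)$. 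Your repair at the end (``finitely many bad prefixes'', ``a finite set having dimension $0$'') does not meet the objection: at a fixed scale $k_j$ the bad cylinders are finite only in the trivial sense that there are at most $b^{k_j}$ cylinders altogether, and the portion of $E(\alpha)$ they cover may have the same Hausdorff dimension as $E(\alpha)$ itself.

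The paper handles this by covering at \emph{variable} scales. For each $\omega\in E'(\alpha)$ (the points with $\omega_1\ne0$) it chooses a length $\kappa(\omega)$ large enough that the integer $n(\omega)$ with base-$b$ digits $\omega_1\dots\omega_{\kappa(\omega)}$ already lies in $\mathcal E(-\alpha,\frac1k)$, \emph{and} such that $b^{\kappa(\omega)}$ belongs to a fixed subsequence along which the $\liminf$ defining $\hbox{d}_-^{\rm exp}\big(\mathcal E(-\alpha,\frac1k)\big)$ is achieved. For each possible value $\kappa_0$ of $\kappa(\omega)$ there are then at most $\#\big(\mathcal E(-\alpha,\frac1k)\cap[1,b^{\kappa_0})\big)\le b^{\kappa_0(d_k+\varepsilon)}$ cylinders of that length, and summing the geometric series $\sum_{\kappa_0}b^{\kappa_0(d_k+\varepsilon-s)}$ finishes. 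Your single-scale route can be rescued by writing $E(\alpha)=\bigcup_{m\ge1}E_m$ with $E_m$ the set of $\omega\in E(\alpha)$ whose prefixes of every length $\ge m$ are $\varepsilon$-good, bounding $\hbox{H-dim}(E_m)$ via the cover at any scale $k_j\ge m$, and then invoking countable stability of Hausdorff dimension; that stratification is precisely what is missing from your write-up.
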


\begin{proof}
From the hypothesis on the probability $\eta$, a sequence $(\omega_n)_{n\in\mathbb N}$ with first term $\omega_1=0$ belongs to $E(\alpha)$ if and only if $(\omega_{n+1})_{n\in\mathbb N}$ do. The set $E'(\alpha)$ of the sequences $(\omega_n)_{n\in\mathbb N}\in E(\alpha)$ with first term $\omega_1\ne0$, has same Hausdorff dimension as $E(\alpha)$ because
$$
E(\alpha)=\bigcup_{n\ge0}\{0^n\}\times E'(\alpha)\quad\hbox{and}\quad\hbox{\rm H-dim}(\{0^n\}\times E'(\alpha))=\hbox{\rm H-dim}(E'(\alpha)).
$$

According to (\ref{fourdensities}) it is sufficient to prove for any $k\in\mathbb N$ the inequality $\hbox{H-dim}(E'(\alpha))\le\hbox{d}_-^{\rm{exp}}(\mathcal E(-\alpha,\frac1k))$. The integer $k$ is now fixed and, by definition of the limit~inf, there exists an infinite set $E\subset\mathbb N$ such that
\begin{equation}\label{extractedsequence}
\lim_{N\in E,\ N\to\infty}\frac{\log\big(\#\mathcal E(-\alpha,\frac1k)\cap[1,N)\big)}{\log N}=\hbox{d}_-^{\rm{exp}}\Big(\mathcal E\big(-\alpha,\frac1k\big)\Big).
\end{equation}
One can assume that the elements of $E$ have the form $N=b^i$ with $i\in\mathbb N$: indeed $N$ is in some interval $[b^{i(N)},b^{i(N)+1})$, the denominator $\log N$ in (\ref{extractedsequence}) is equivalent to $\log\big(b^{i(N)}\big)$, and the numerator is greater or equal to $\log\big(\#\mathcal E(-\alpha,\frac1k)\cap[1,b^{i(N)})\big)$.

Let $\omega\in E'(\alpha)$. There exists $\kappa\in\mathbb N$ such that
\begin{equation}\label{bydef}
\alpha-\frac1{2k}\le\frac{\log\eta[\omega_1\dots\omega_\kappa]}{\log(1/b^\kappa)}\le\alpha+\frac1{2k}
\end{equation}
and, since these inequalities are true for any $\kappa$ large enough, one can chose $\kappa=\kappa(\omega),\ $ such that $\kappa\ge2k\alpha+2$ and $b^\kappa\in E$. Let us prove that the integer
\begin{equation}\label{expansion?}
n=n(\omega):=\omega_1b^{\kappa-1}+\dots+\omega_\kappa b^0
\end{equation}
belongs to the level set $\mathcal E(-\alpha,\frac1k)$ if $k$ is large enough. Indeed the numerator in (\ref{bydef}) is $\log(f_\eta(n))$ because $\omega_1\ne0$, and
$$
-\frac{\log(f_\eta(n))}{\log n}=\frac{\log\eta[\omega_1\dots\omega_\kappa]}{\log(1/b^\kappa)}\ \frac{\log(b^\kappa)}{\log n}
$$
where $1\le\frac{\log(b^\kappa)}{\log n}\le\frac{\log(b^\kappa)}{\log(b^{\kappa-1})}=\frac\kappa{\kappa-1}$ and $(\alpha+\frac1{2k})\frac\kappa{\kappa-1}\le\alpha+\frac1k$ (consequence of the hypothesis $\kappa\ge2k\alpha+2$).

There exists a disjoint cover of $E'(\alpha)$ by a finite or countable family of cylinder sets $C_i$, each of the $C_i$ having the form $[\omega_1\dots\omega_{\kappa}]$ where $\kappa=\kappa(\omega)$ is defined in (\ref{bydef}). We consider the cylinder sets $C_i$ such that $\kappa(\omega)$ has a given value $\kappa_0$. The corresponding integers $n(\omega)$ are distinct and belong to $\mathcal E(-\alpha,\frac1k)\cap[1,b^{\kappa_0})$. By the hypotheses on $\kappa(\omega)$ one consider only the integers $\kappa_0\ge2k\alpha+2$ such that $b^{\kappa_0}\in E$; in particular, the larger is $k$, the larger is~$\kappa_0$. Let $\varepsilon>0$ and suppose that $k$ is large enough so that, applying (\ref{extractedsequence}) to $N=b^{\kappa_0}$ and setting $d_k=\hbox{d}_-^{\rm{exp}}\Big(\mathcal E\big(-\alpha,\frac1k\big)\Big))$,
$$
\frac{\log\big(\#\mathcal E(-\alpha,\frac1k)\cap[1,b^{\kappa_0})\big)}{\log b^{\kappa_0}}\le d_k+\varepsilon.
$$
Consequently, for any $s>d_k+\varepsilon$
$$
\sum_i\delta(C_i)^s\le\sum_{\kappa_0=0}^\infty b^{\kappa_0(d_k+\varepsilon)}b^{-\kappa_0s}=\frac1{1-b^{d_k+\varepsilon-s}},
$$
proving that $\hbox{H-dim}(E'(\alpha))\le d_k+\varepsilon$. Since it is true for any $k$ large enough and any $\varepsilon>0$, this implies $\hbox{H-dim}(E'(\alpha))\le\hbox{d}_-^{\rm{exp}}(\mathcal E(-\alpha))$.
\end{proof}

\section{Bernoulli convolution and number of representations in integral base}

\subsection{ Bernoulli convolution in integral base and related matrices}The Bernoulli convolution \cite{Erd39,PSS} in integral base $b\ge2$, associated to a positive probability vector $p=(p_0,\dots,p_{d-1})$ with $d\ge b$, is the probability measure $\eta=\eta_{b,p}$ defined by setting, for any interval~$I\subset\mathbb R$
\begin{equation}
\eta_{b,p}(I):=P_p\Big(\Big\{(\omega_k)_{k\in\mathbb N}\;:\;0\le\omega_k\le d-1,\ \sum_k\frac{\omega_k}{b^k}\in I\Big\}\Big)
\end{equation}
where $P_p$ the product probability defined on $\{0,\dots,d-1\}^{\mathbb N}$ from the probability vector $p$.

We define also, on the symbolic space $\{0,1,\dots,b-1\}^{\mathbb N}$, both probability measures
\begin{equation}
\begin{array}{l}\displaystyle\eta_{q,symb}[\varepsilon_1\dots\varepsilon_k]:=\frac{\eta(q+I_{\varepsilon_1\dots\varepsilon_k})}{\eta(q+[0,1))}\quad(q=0,1,2,\dots),\\
\displaystyle\eta_{sum,symb}[\varepsilon_1\dots\varepsilon_k]:=\sum_{q=0}^\infty\eta(q+I_{\varepsilon_1\dots \varepsilon_k}).\end{array}
\end{equation}
Let us prove that they are sofic.

\begin{rem}
The shift-invariant measure $\eta_{sum,symb}$ is the image of $P_p$ by the shift-commuting map $\varphi$ which associates to any $(\omega_k)_{k\in\mathbb N}$, the $b$-expansion of the fractional part of $\sum_k\frac{\omega_k}{b^k}$. Unfortunately $\varphi$ is discontinuous ($\lim_{n\to\infty}\varphi((b-1)^n\bar0)=\overline{b-1}\ne\varphi(\lim_{n\to\infty}(b-1)^n\bar0)=\bar0$), so this is not sufficient to prove that $\eta_{sum,symb}$ is sofic.
\end{rem}

Let the bi-infinite matrix
\begin{equation}\label{bi-i}
M_\infty:=\left(\begin{array}{ccccccccccc}\ddots&\vdots&\vdots&\reflectbox{$\ddots$}&\vdots&\ddots&\vdots&\ddots&\vdots&\vdots&\reflectbox{$\ddots$}\\\dots&0&p_{d-1}&\dots&p_{d-b-1}&\dots&p_0&\dots&0&0&\dots\\\dots&0&0&\dots&p_{d-1}&\dots&p_b&\dots&p_0&0&\dots\\\reflectbox{$\ddots$}&\vdots&\vdots&\ddots&\vdots&\reflectbox{$\ddots$}&\vdots&\reflectbox{$\ddots$}&\vdots&\vdots&\ddots\end{array}\right),
\end{equation}
where each row contains the same probability vector $\begin{pmatrix}p_{d-1}&\dots&p_0\end{pmatrix}$, shifted $b$ times to the right at the following row. We define in an unique way some matrices $M_0,\dots,M_{b-1}$, by setting that $M_0,\dots,M_{b-1}$ are submatrices of $M_\infty$ of size $a+1:=\left\lceil\frac{d-1}{b-1}\right\rceil$ and
\begin{equation}\label{thematrices}
\begin{array}{l}M_0:=\begin{pmatrix}p_0&0&\dots\\\vdots&\vdots&\ddots\end{pmatrix},\ M_1:=\begin{pmatrix}p_1&p_0&0&\dots\\\vdots&\vdots&\vdots&\ddots\end{pmatrix},\ \dots\ ,\\
M_{b-1}:=\begin{pmatrix}p_{b-1}&p_{b-2}&\dots\\\vdots&\vdots&\ddots\end{pmatrix}.\end{array}
\end{equation}
Assuming by convention that $p_i=0$ for $i\not\in\{0,\dots,d-1\}$, we can write
\begin{equation}\label{thematrix}
M_j=(p_{j+bq-q'})_{0\le q\le a\atop0\le q'\le a}=\begin{pmatrix}p_j&p_{j-1}&\dots&p_{j-a}\\p_{j+b}&p_{j+b-1}&\dots&p_{j+b-a}\\\vdots&\vdots&\ddots&\vdots\\p_{j+ab}&p_{j+ab-1}&\dots&p_{j+ab-a}&\end{pmatrix}.
\end{equation}

\begin{rem}\label{uniq}About the choice of the size of the matrices, $a=\left\lceil\frac{d-1}{b-1}\right\rceil-1$ is the largest integer such that the matrix $\sum_j(p_{j+bq-q'})_{0\le q\le a\atop0\le q'\le a}$ is irreducible, and the smallest integer such that its transpose is stochastic.\end{rem}

\begin{thm}\label{linrep}
The following formula gives the measure of the translated $b$-adic interval $q+I_{\varepsilon_1\dots \varepsilon_k}$ with $q\in\{0,1,\dots,a\}$, $\varepsilon_1,\dots,\varepsilon_k\in\{0,1,\dots,b-1\}$ and $I_{\varepsilon_1\dots\varepsilon_k}:=\left[\sum_{i=1}^k\frac{\varepsilon_i}{b^i},\ \sum_{i=1}^k\frac{\varepsilon_i}{b^i}+\frac1{b^k}\right)$:\begin{equation}\label{values}
\eta(q+I_{\varepsilon_1\dots\varepsilon_k})=E_{q}M_{\varepsilon_1}\dots M_{\varepsilon_k}C
\end{equation}
where $E_0,E_1,\dots,E_a$ are the canonical basis $(a+1)$-dimensional row vectors and $C$ the unique positive eigenvector of the irreducible matrix $\sum_i M_i$ such that $\sum_iE_iC=1$. Consequently the measures $\eta_{q,symb}$ and $\eta_{sum,symb}$ are linearly representable.
\end{thm}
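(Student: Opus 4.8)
The plan is to obtain a one-step recursion for the numbers $\eta(q+I_{\varepsilon_1\dots\varepsilon_k})$ from the self-similarity of $\eta$, to read it off as left-multiplication by the matrix $M_{\varepsilon_1}$, to identify the resulting ``initial'' column vector with $C$, and finally to deduce the linear representability of $\eta_{q,symb}$ and $\eta_{sum,symb}$ by elementary manipulations. First I would record the self-similarity relation: writing $X=\sum_{k\ge1}\omega_kb^{-k}$ with $(\omega_k)_{k\ge1}$ i.i.d.\ of law $p$, the identity $X=(\omega_1+X')/b$, where $X'$ is an independent copy of $X$, yields
\[
\eta(I)=\sum_{j\in\mathbb Z}p_j\,\eta(bI-j)
\]
for every interval $I$, with the convention $p_i=0$ for $i\notin\{0,\dots,d-1\}$. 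I would also note that $\operatorname{supp}\eta=[0,L]$ with $L:=\frac{d-1}{b-1}$, that $a=\lceil L\rceil-1$, and that $\eta(\{L\})=0$, because $X=L$ forces $\omega_k=d-1$ for every $k$, an event of probability $\prod_k p_{d-1}=0$ (note $0<p_{d-1}<1$ as $d\ge2$). Consequently $\eta\big(q'+I_{\delta_1\dots\delta_m}\big)=0$ whenever $q'\notin\{0,\dots,a\}$: indeed $q'+I_{\delta_1\dots\delta_m}\subseteq[q',q'+1)$, which is disjoint from $[0,L]$ except, when $q'=a+1=L$ (the case $(b-1)\mid(d-1)$), in the single point $L$, of zero mass.

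Next I would apply the self-similarity relation to $J:=q+I_{\varepsilon_1\dots\varepsilon_k}$. From $bI_{\varepsilon_1\dots\varepsilon_k}=\varepsilon_1+I_{\varepsilon_2\dots\varepsilon_k}$ one gets $bJ-j=(bq+\varepsilon_1-j)+I_{\varepsilon_2\dots\varepsilon_k}$, and the substitution $q'=bq+\varepsilon_1-j$, together with the vanishing of the out-of-range terms, gives
\[
\eta\big(q+I_{\varepsilon_1\dots\varepsilon_k}\big)=\sum_{q'=0}^{a}p_{\varepsilon_1+bq-q'}\;\eta\big(q'+I_{\varepsilon_2\dots\varepsilon_k}\big).
\]
By (\ref{thematrix}) the coefficient $p_{\varepsilon_1+bq-q'}$ is exactly the $(q,q')$-entry of $M_{\varepsilon_1}$, so the column vector $V(\varepsilon_1\dots\varepsilon_k):=\big(\eta(q+I_{\varepsilon_1\dots\varepsilon_k})\big)_{0\le q\le a}$ satisfies $V(\varepsilon_1\dots\varepsilon_k)=M_{\varepsilon_1}\,V(\varepsilon_2\dots\varepsilon_k)$. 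Iterating down to the empty word gives $V(\varepsilon_1\dots\varepsilon_k)=M_{\varepsilon_1}\cdots M_{\varepsilon_k}V_0$ with $V_0:=\big(\eta([q,q+1))\big)_{0\le q\le a}$; equivalently, $\eta(q+I_{\varepsilon_1\dots\varepsilon_k})=E_qM_{\varepsilon_1}\cdots M_{\varepsilon_k}V_0$.

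It then remains to identify $V_0$ with $C$. Summing the case $k=1$ over $\varepsilon_1\in\{0,\dots,b-1\}$ and using that the $b$ intervals $q+I_{\varepsilon_1}$ partition $q+[0,1)$, one obtains $\big(\sum_iM_i\big)V_0=V_0$. By Remark~\ref{uniq}, $\sum_iM_i$ is irreducible and its transpose is stochastic, so its columns sum to $1$ and its Perron eigenvalue is $1$; being a nonnegative eigenvector for the eigenvalue $1$ which is nonzero (because $\sum_qV_0(q)=\eta([0,a+1))=1$), $V_0$ must be the positive Perron eigenvector of $\sum_iM_i$, and it satisfies the normalization $\sum_iE_iV_0=1$, hence $V_0=C$. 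This proves (\ref{values}).

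Finally, summing (\ref{values}) over $q\in\{0,\dots,a\}$ (larger $q$ contribute $0$) gives $\eta_{sum,symb}[\varepsilon_1\dots\varepsilon_k]=\mathbf 1\,M_{\varepsilon_1}\cdots M_{\varepsilon_k}C$ with $\mathbf 1=(1,\dots,1)=\sum_iE_i$, and the data $R_i:=\mathbf 1\,M_i$, $\{M_i\}$, $C$ satisfies (\ref{conditions}), since $\big(\sum_iM_i\big)C=C$ and $(\sum_iR_i)C=\mathbf 1C=\sum_iE_iC=1$. In the same way $\eta_{q,symb}[\varepsilon_1\dots\varepsilon_k]=(E_qC)^{-1}E_qM_{\varepsilon_1}\cdots M_{\varepsilon_k}C$, and the data $R_i:=(E_qC)^{-1}E_qM_i$, $\{M_i\}$, $C$ satisfies (\ref{conditions}) (here $E_qC=C_q>0$ because $C$ is positive). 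The step I expect to require the most care is the recursion itself: checking that $b(q+I_{\varepsilon_1\dots\varepsilon_k})-j$ is again an admissible translated $b$-adic interval and that the out-of-range translates contribute nothing — which is precisely where $\eta(\{L\})=0$ and the exact value $a=\lceil L\rceil-1$ enter — and confirming along the way that the Perron eigenvalue of $\sum_iM_i$ equals $1$, so that the fixed vector $V_0$ is the eigenvector $C$ of the statement.
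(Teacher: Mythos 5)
Your proof is correct and follows essentially the same route as the paper: conditioning on the first digit (equivalently, the self-similarity relation) yields the one-step recursion $\eta(q+I_{\varepsilon_1\dots\varepsilon_k})=\sum_{q'=0}^{a}p_{bq+\varepsilon_1-q'}\,\eta(q'+I_{\varepsilon_2\dots\varepsilon_k})$, which is read off as left multiplication by $M_{\varepsilon_1}$, and the initial vector is identified with $C$ by summing the case $k=1$ over $\varepsilon_1$. You are in fact somewhat more careful than the paper on the points it leaves implicit --- the vanishing of the out-of-range translates (via $\eta(\{L\})=0$), the Perron--Frobenius identification and normalization of $C$, and the explicit choice of the vectors $R_i$ witnessing linear representability of $\eta_{q,symb}$ and $\eta_{sum,symb}$.
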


\begin{proof}Let $I=I_{\varepsilon_1\dots\varepsilon_n}$ and $I'=I_{\varepsilon_2\dots\varepsilon_n}$. With the convention that $p_i=0$ for any $i\not\in\{0,1,\dots,d-1\}$, we have
$$
\begin{array}{rcl}\eta(q+I)&=&\displaystyle\sum_{i=0}^{d-1}P\Big(\big\{\omega_1=i\hbox{ and }\sum_k\frac{\omega_{k+1}}{b^k}\in q b+\varepsilon_1-i+I'\big\}\Big)\\&=&\displaystyle\sum_{i\in\mathbb Z}p_i\ \eta\left(q b+\varepsilon_1-i+I'\right)\\&=&\displaystyle\sum_{q'\in\mathbb Z}p_{q b+\varepsilon_1-q'}\ \eta\left(q'+I'\right).\end{array}
$$
In fact $q'$ belongs to $\{0,1,\dots,a\}$, otherwise $\eta\left(q'+I'\right)$ is null. Since the coefficients $p_{q b+\varepsilon_1-q'}$ for $q,q'\in\{0,1,\dots,a\}$, are the entries of $M_{\varepsilon_1}$,
$$
\begin{pmatrix}\eta(I)\\\eta(I+1)\\\vdots\\\eta(I+a)\end{pmatrix}=M_{\varepsilon_1}\begin{pmatrix}\eta(I')\\\eta(I'+1)\\\vdots\\\eta(I'+a)\end{pmatrix}
$$
and, by induction,
\begin{equation}\label{eta}
\begin{pmatrix}\eta(I)\\\eta(I+1)\\\vdots\\\eta(I+a)\end{pmatrix}=M_{\varepsilon_1}\dots M_{\varepsilon_k}C\quad\hbox{with }C:=\begin{pmatrix}\eta([0,1))\\\eta([1,2))\\\vdots\\\eta([a,a+1))\end{pmatrix}.
\end{equation}
In the particular case $k=1$ we have $I=I_{\varepsilon_1}$ and, making the sum in (\ref{eta}) for $\varepsilon_1=0,1,\dots,b-1$, we deduce that $C$ is a eigenvector of $\sum_i M_i$. Moreover $C$ is positive because the measure of any nontrivial subinterval of $\big[0,\frac{d-1}{b-1}\big]$ is positive, and $\sum_i E_iC=1$ because $[0,a+1]\supset\big[0,\frac{d-1}{b-1}\big]$ (the support of $\eta$), proving the unicity of $C$.
\hfill\end{proof}

\begin{cor}
The measures $\eta_{q,symb}$ and $\eta_{sum,symb}$ are sofic, as well as the measure $\eta'$ defined by
$$
\eta'[\varepsilon_1\dots\varepsilon_k]:=\eta_{sum,symb}[\varepsilon_k\dots\varepsilon_1].
$$
Moreover $\eta'$ is  the continuous image, by the map $i\mapsto\left\lfloor\frac i{a+1}\right\rfloor$, of the Markov measure on $\{0,1,\dots,(a+1)b-1\}^{\mathbb N}$ of transition matrix
$$
P_{\eta'}:=\begin{pmatrix}^t{M_0}&^t{M_1}&\dots&^t{M_{b-1}}\\\vdots&\vdots&\ddots&\vdots\\^t{M_0}&^t{M_1}&\dots&^t{M_{b-1}}\end{pmatrix}
$$
and initial probability vector $\begin{pmatrix}^tC&\dots&^tC\end{pmatrix}$.
\end{cor}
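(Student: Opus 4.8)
The plan is to reduce the whole statement to Theorem~\ref{linrep} and to the equivalence in Theorem~\ref{Msofic}. First I read off linear representations. Dividing (\ref{values}) by $\eta(q+[0,1))=E_qC>0$ gives
\[
\eta_{q,symb}[\varepsilon_1\dots\varepsilon_k]=R_{\varepsilon_1}M_{\varepsilon_2}\cdots M_{\varepsilon_k}C,\qquad R_\varepsilon:=(E_qC)^{-1}E_qM_\varepsilon ,
\]
and since $\bigl(\sum_iM_i\bigr)C=C$ by Theorem~\ref{linrep} one has $\bigl(\sum_iR_i\bigr)C=(E_qC)^{-1}E_qC=1$, so the two conditions in (\ref{conditions}) hold. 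For $\eta_{sum,symb}$, write $\mathbf{1}:=\sum_{q=0}^{a}E_q$ and use that the support $\bigl[0,\tfrac{d-1}{b-1}\bigr]$ of $\eta$ is contained in $[0,a+1)$ up to an $\eta$-null point; then (\ref{values}) gives
\[
\eta_{sum,symb}[\varepsilon_1\dots\varepsilon_k]=\sum_{q\ge0}\eta(q+I_{\varepsilon_1\dots\varepsilon_k})=\mathbf{1}\,M_{\varepsilon_1}\cdots M_{\varepsilon_k}\,C ,
\]
again a linear representation since $\mathbf{1}C=\eta([0,a+1))=1$ and $\bigl(\sum_iM_i\bigr)C=C$. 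By Theorem~\ref{Msofic} both $\eta_{q,symb}$ and $\eta_{sum,symb}$ are sofic.

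For $\eta'$ the point is that reversing a word reverses a matrix product and transposes each factor, a scalar being its own transpose:
\[
\eta'[\varepsilon_1\dots\varepsilon_k]=\eta_{sum,symb}[\varepsilon_k\dots\varepsilon_1]=\mathbf{1}\,M_{\varepsilon_k}\cdots M_{\varepsilon_1}\,C={}^tC\;{}^tM_{\varepsilon_1}\,{}^tM_{\varepsilon_2}\cdots{}^tM_{\varepsilon_k}\;{}^t\mathbf{1} .
\]
This is a linear representation of $\eta'$ with row vectors $R_\varepsilon:={}^tC\,{}^tM_\varepsilon$, matrices $M'_\varepsilon:={}^tM_\varepsilon$, and column vector $C':={}^t\mathbf{1}$, the all-ones column. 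Here (\ref{conditions}) is precisely the content of Remark~\ref{uniq}: since ${}^t\bigl(\sum_iM_i\bigr)$ is stochastic, the all-ones column ${}^t\mathbf{1}$ is one of its right eigenvectors, so $\bigl(\sum_iM'_i\bigr)C'=C'$, and $\bigl(\sum_iR_i\bigr)C'={}^tC\;{}^t\bigl(\sum_iM_i\bigr)\;{}^t\mathbf{1}={}^tC\;{}^t\mathbf{1}=\mathbf{1}C=1$. Hence $\eta'$ is sofic by Theorem~\ref{Msofic}.

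For the explicit Markov model I apply the ``converse'' half of the proof of Theorem~\ref{Msofic} to the triple $(R_\varepsilon,M'_\varepsilon,C')$ just built. Because $C'={}^t\mathbf{1}$ already has all entries equal to $1$, the diagonal conjugation there is the identity, and the block construction produces the Markov measure $\mu$ on $\{0,1,\dots,(a+1)b-1\}^{\mathbb N}$ whose stochastic transition matrix $\sum_iM''_i$ is exactly the block matrix $P_{\eta'}$ (every block-row equal to $\bigl({}^tM_0\ \cdots\ {}^tM_{b-1}\bigr)$), whose initial distribution $\sum_iR''_i=\bigl({}^tC\,{}^tM_0\ \cdots\ {}^tC\,{}^tM_{b-1}\bigr)$ is the left Perron eigenvector of $P_{\eta'}$ (so that $\mu$, like $\eta_{sum,symb}$, is shift invariant), and whose letter-to-letter morphism is $i\mapsto\lfloor i/(a+1)\rfloor$. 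Pushing $\mu$ forward by this morphism then returns $\eta'$ by the same telescoping matrix multiplications $\sum_{q}({}^tM_{\varepsilon_j})_{q',q}(\,\cdot\,)_q$ used to pass from (\ref{Markovmat}) to (\ref{Sof}).

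I expect the only work to be bookkeeping: keeping the order of the transposed factors straight, and checking that the normalisation $\mathbf{1}C=1$ (equivalently $\eta([0,a+1))=1$) and the stochasticity of ${}^t\bigl(\sum_iM_i\bigr)$ recorded in Remark~\ref{uniq} are exactly what makes (\ref{conditions}) hold. There is no analytic difficulty beyond Theorem~\ref{linrep}.
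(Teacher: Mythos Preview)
Your approach is exactly the paper's: invoke Theorem~\ref{linrep} for the linear representations, Theorem~\ref{Msofic} for soficity, and then run the converse construction from the proof of Theorem~\ref{Msofic} on the transposed representation $\eta'[\varepsilon_1\dots\varepsilon_k]={}^tC\,{}^tM_{\varepsilon_1}\cdots{}^tM_{\varepsilon_k}\,{}^t\mathbf{1}$ to read off $P_{\eta'}$ and the letter-to-letter map $i\mapsto\lfloor i/(a+1)\rfloor$. You carry the bookkeeping out more carefully than the paper's one-line proof, and in particular you obtain the initial distribution $\bigl({}^tC\,{}^tM_0\ \cdots\ {}^tC\,{}^tM_{b-1}\bigr)$, which is indeed what the construction of Theorem~\ref{Msofic} produces and is the stationary vector of $P_{\eta'}$; the vector $\begin{pmatrix}{}^tC&\dots&{}^tC\end{pmatrix}$ printed in the statement has total mass $b$ and, under the map $i\mapsto\lfloor i/(a+1)\rfloor$, would give cylinder masses independent of $\varepsilon_1$, so it appears to be a slip in the paper rather than a gap in your argument.
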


\begin{proof}
According to Theorems \ref{linrep} and \ref{Msofic}, $\eta_{q,symb}$ and $\eta_{sum,symb}$ are sofic. Formula (\ref{values}) gives a linear representation of $\eta'$:
$$
\eta'[\varepsilon_1\dots\varepsilon_k]=^t\hskip-2ptC\ ^t(M_{\varepsilon_1})\dots^t(M_{\varepsilon_k})\begin{pmatrix}1\\\vdots\\1\end{pmatrix}.
$$
According to the proof of the converse part of Theorem \ref{Msofic}, $P_{\eta'}$ is the transition matrix and $\begin{pmatrix}^tC&\dots&^tC\end{pmatrix}$ is the initial probability vector of the Markov chain associated to $\eta'$.\hfill\end{proof}

\subsection{The matrices $M_j$ are also related to the transducer of normalization \cite{FS} and to the number of representations of the integers in base $b$}

\begin{defn}(i) The normalization in base $b$, with $d\ge b$ digits, is the map
$$
\begin{array}{l}\begin{array}{rl}\frak n:&\{0,\dots,d-1\}^{\mathbb N}\to\{0,\dots,b-1\}^{\mathbb Z}\\&(\omega_i)_{i\in\mathbb N}\mapsto(\varepsilon_i)_{i\in\mathbb Z}\quad\hbox{such that}\end{array}\\\displaystyle\sum_{i=1}^\infty\frac{\omega_i}{b^i}=\sum_{i=-\infty}^\infty\frac{\varepsilon_i}{b^i}\Big(=\sum_{i=-(h-1)}^\infty\frac{\varepsilon_i}{b^i}\hbox{ with }\varepsilon_{-(h-1)}\ne0\hbox{ if }(\omega_i)_{i\in\mathbb N}\ne(0)_{i\in\mathbb N}\Big)\end{array}
$$
with the additional condition that the $\varepsilon_i$ are not eventually $b-1$.

(ii) The normalization in base $b$ also associates to any finite sequence $(\omega_{h-1},\dots,\omega_0)$ with terms in $\{0,\dots,d-1\}$ and distinct from $0^h$, the sequence $(\varepsilon_{k-1},\dots,\varepsilon_0)$ with terms in $\{0,\dots,b-1\}$ such that
\begin{equation}\label{normalized}
\omega_{h-1}b^{h-1}+\dots+\omega_0b^0=\varepsilon_{k-1}b^{k-1}+\dots+\varepsilon_0b^0\hbox{ and }\varepsilon_{k-1}\ne0.
\end{equation}
Notice that if we put $\omega_h=\omega_{h+1}=\dots=\omega_{k-1}=0$ we have
$$
\frak n(\omega_{k-1},\dots,\omega_0,0,0,\dots)=(\dots,0,0,\varepsilon_{k-1},\dots,\varepsilon_0,0,0,\dots).
$$

(iii) The number of $b$-representations of $n$ with digits in $\{0,\dots,d-1\}$~is
$$
\mathcal N(n):=\#\Big\{(\omega_i)_{i\ge0}\;:\;n=\sum_{i=0}^\infty\omega_ib^i,\ \omega_i\in\{0,\dots,d-1\}\Big\}.
$$
The number of $b$-representations of length $k$ is
$$
\mathcal N_k(n):=\#\Big\{(\omega_0,\dots,\omega_{k-1})\;:\;n=\sum_{i=0}^{k-1}\omega_ib^i,\ \omega_i\in\{0,\dots,d-1\}\Big\}.
$$
\end{defn}

Now we define the transducer $\mathcal T$ as follows:

\centerline{\includegraphics[trim=0 400 0 100,clip,scale=0.5]{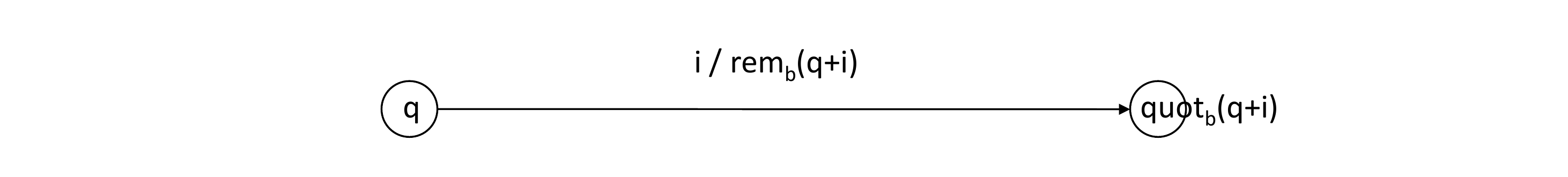}}

where $q$ belongs to the set of states $\{0,\dots,a\}$ with $a=\left\lceil\frac{d-1}{b-1}\right\rceil-1$, and quot$_b(q+i)$ and rem$_b(q+i)$ are respectively the quotient and the remainder of the Euclidean division of $q+i$ by~$b$.
It is the transpose of the transducer of normalization of Frougny and Sakarovitch in \cite[Propositions 2.2.5, 2.2.6 and Theorem 2.2.7]{FS}, which chose $\{-a,\dots,a\}$ as set of states.

\begin{pro}\label{rightleft}Let us consider the matrices $M_0,\dots,M_{b-1}$ defined by (\ref{thematrices}) or (\ref{thematrix}), in the case $p_0=\dots=p_{d-1}=\frac1d$.

(i) The transpose of $dM_j$ is the incidence matrix of the graph, whose set of vertices is $\{0,\dots,a\}$ and whose edges are some edges of $\mathcal T$: only the edges with output label~$j$.

(ii) Let $w=\varepsilon_1\dots\varepsilon_k\in\{0,\dots,b-1\}^k$ and $M_w=M_{\varepsilon_1}\dots M_{\varepsilon_k}$, we have
$$
d^kM_w=\begin{pmatrix}\mathcal N_k(n)&\mathcal N_k(n-1)&\dots&\mathcal N_k(n-a)\\\mathcal N_k(n+b^k)&\mathcal N_k(n+b^k-1)&\dots&\mathcal N_k(n+b^k-a)\\\vdots&\vdots&\ddots&\vdots\\\mathcal N_k(n+ab^k)&\mathcal N_k(n+ab^k-1)&\dots&\mathcal N_k(n+ab^k-a)\end{pmatrix}
$$
where $n=\varepsilon_1b^{k-1}+\dots+\varepsilon_kb^0$. In the first row, the $\mathcal N_k(n-i)$ are equal to $\mathcal N(n-i)$, so we have
\begin{equation}\label{etabpdeI}
\eta_{b,p}(I_{\varepsilon_1\dots\varepsilon_k})=\frac1{d^k}\begin{pmatrix}\mathcal N(n)&\mathcal N(n-1)&\dots&\mathcal N(n-a)\end{pmatrix}C.
\end{equation}

(iii) To normalize a finite sequence $(\xi_{h-1},\dots,\xi_0)\in\{0,\dots,d-1\}^h$, one enters in $\mathcal T$ the digits $\xi_0,\dots,\xi_{h-1},0,0,\dots$ from the initial state~$0$, and one obtain in output the digits $\zeta_0,\dots,\zeta_{k-1}$ such that
\begin{equation}\label{expectedformula}
\xi_{h-1}b^{h-1}+\dots+\xi_0b^0=\zeta_{k-1}b^{k-1}+\dots+\zeta_0b^0.
\end{equation}
\end{pro}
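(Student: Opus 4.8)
The statement to prove is Proposition \ref{rightleft}, with three parts. The whole proof rests on unwinding the definition of the matrices $M_j$ in (\ref{thematrix}) in the special case $p_i=\frac1d$, and matching entries against counting functions and the transitions of $\mathcal T$. I will treat the three parts in the order (i), (ii), (iii), since (ii) uses (i) implicitly and (iii) is really a restatement of (ii) at the level of paths rather than matrix products.

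For part (i): the transducer $\mathcal T$ has, from state $q$ on input $i\in\{0,\dots,d-1\}$, an edge to state $\mathrm{quot}_b(q+i)$ with output label $\mathrm{rem}_b(q+i)$. So the edges carrying output label $j$ are exactly those $(q\xrightarrow{i}q')$ with $q+i-bq'=j$, i.e. $i=j+bq'-q$; such an edge exists iff $j+bq'-q\in\{0,\dots,d-1\}$, and in that case there is exactly one input digit $i$ giving it. Hence the number of output-$j$ edges from $q$ to $q'$ is $1$ if $j+bq'-q\in\{0,\dots,d-1\}$ and $0$ otherwise — which, using the convention $p_i=0$ outside $\{0,\dots,d-1\}$ and $p_i=\frac1d$ inside, equals $d\cdot p_{j+bq'-q}$. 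Comparing with $M_j=(p_{j+bq-q'})_{q,q'}$ in (\ref{thematrix}), the incidence matrix of the output-$j$ subgraph (entry indexed by $(q,q')$) is exactly ${}^t(dM_j)$. This is the routine but notationally delicate step; I expect the indexing swap between $M_j$ and its transpose to be the only place one must be careful.

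For part (ii): iterate. Since $(dM_j)^t$ counts single edges of $\mathcal T$ with output $j$, the product ${}^t(d^kM_w)={}^t(dM_{\varepsilon_k})\cdots{}^t(dM_{\varepsilon_1})$ counts paths in $\mathcal T$ of length $k$ from state $q'$ to state $q$ whose output word is $\varepsilon_1\cdots\varepsilon_k$. But such a path, read as a normalization computation, means precisely: there is an input word $\xi_1\cdots\xi_k\in\{0,\dots,d-1\}^k$ (uniquely determined by the path, by part (i)) such that, starting in state $q'$, the digits $\xi$ are converted to the digits $\varepsilon$, which by the arithmetic of $\mathcal T$ translates to $q'b^k + (\xi_1b^{k-1}+\cdots+\xi_k) = q + (\varepsilon_1b^{k-1}+\cdots+\varepsilon_k) = q + n$. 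Equivalently, with $m:=q'b^k+ (q-?)$... more cleanly: the $(q,q')$ entry of $d^kM_w$ equals the number of length-$k$ representations in base $b$ with digits in $\{0,\dots,d-1\}$ of the integer $n + (q - q'\cdot? )$ — carrying out the bookkeeping, of $n - i$ shifted by $q'b^k$, giving $\mathcal N_k(n + q'b^k - i)$ where $i$ indexes the row... I would organize this by writing the $(p,q')$-entry (rows indexed $p=0,\dots,a$, columns $q'=0,\dots,a$) and checking it equals $\mathcal N_k(n + p b^k - q')$, which is exactly the displayed matrix after relabelling $p$ as the row index. For the first row $p=0$: a length-$k$ representation of $n-q'$ with $q'\le a < b^k$ (true once $k\ge1$ and $n\ge ?$; in any case $\mathcal N_k(n-q')=\mathcal N(n-q')$ because any representation of an integer $< b^k$ automatically has length $\le k$, and shorter ones are padded by leading zeros which are allowed in the length-$k$ count) gives $\mathcal N_k(n-i)=\mathcal N(n-i)$. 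Then (\ref{etabpdeI}) follows by substituting the first row of $\frac1{d^k}d^kM_w$ into (\ref{values}) of Theorem \ref{linrep} with $q=0$ and $\varepsilon_1\dots\varepsilon_k$ the given word, since $\eta_{b,p}(I_{\varepsilon_1\dots\varepsilon_k})=E_0 M_{\varepsilon_1}\cdots M_{\varepsilon_k}C$.

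For part (iii): this is the path interpretation made explicit. Feeding $\xi_0,\dots,\xi_{h-1},0,0,\dots$ into $\mathcal T$ from state $0$, the state reached after reading $\xi_0,\dots,\xi_{h-1}$ is $\lfloor (\xi_{h-1}b^{h-1}+\cdots+\xi_0)/b^h\rfloor$ shifted appropriately — more precisely, an induction on the number of digits read shows that if after reading the low-order digits $\xi_0,\dots,\xi_{\ell-1}$ the machine has emitted $\zeta_0,\dots,\zeta_{\ell-1}$ and sits in state $q_\ell$, then $\xi_{\ell-1}b^{\ell-1}+\cdots+\xi_0 = q_\ell b^\ell + \zeta_{\ell-1}b^{\ell-1}+\cdots+\zeta_0$; this is immediate from the defining relation $q + i = b\cdot\mathrm{quot}_b(q+i) + \mathrm{rem}_b(q+i)$ at each step. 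Continuing to read the padding zeros flushes the remaining state to $0$ (the state strictly decreases while it is nonzero and a $0$ is read, since $\mathrm{quot}_b(q+0)=\lfloor q/b\rfloor<q$ for $q\ge1$ as $q\le a<b$ forces $\lfloor q/b\rfloor=0$ — in fact one padding zero suffices), yielding (\ref{expectedformula}) with $k=h+1$ (or $k=h$ when no carry occurs), and $\zeta_{k-1}\ne0$ can be arranged exactly as in (\ref{normalized}). The main obstacle throughout is purely bookkeeping: keeping the transpose, the row/column indexing of $M_j$, and the "length-$k$ versus unrestricted" distinction for $\mathcal N$ all consistent; there is no conceptual difficulty once the edge-counting identity of part (i) is in hand.
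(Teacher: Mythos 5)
Your overall strategy is the paper's: read $^t(dM_j)$ as the incidence matrix of the output-$j$ edges of $\mathcal T$, multiply to count paths, and identify paths with length-$k$ representations. Part (i) is correct. But in part (ii) there is a genuine gap at the only non-routine point of the argument. You treat the correspondence between paths and representations as a tautology (``such a path \dots means precisely: there is an input word \dots''), which only gives the inequality $d^k(M_w)_{r,c}\le\mathcal N_k(n+rb^k-c)$: each path from state $c$ to state $r$ with the prescribed output determines a length-$k$ representation of $n+rb^k-c$, injectively. For the reverse inclusion you must show that, given an arbitrary length-$k$ representation $\omega_1\dots\omega_k$ of $n+rb^k-c$, the carries $q_i$ forced by the division relations $q_i+\omega_i=bq_{i-1}+\varepsilon_i$ actually remain in the state set $\{0,\dots,a\}$ — otherwise the corresponding walk leaves the graph and is not counted by the matrix product. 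This is exactly where the choice $a=\lceil\frac{d-1}{b-1}\rceil-1$ enters (cf.\ Remark \ref{uniq}); the paper settles it by the descending induction $q_{i-1}=\lfloor(q_i+\omega_i)/b\rfloor\le\lfloor(a+d-1)/b\rfloor\le a$. Without this step the claimed equality (and hence (\ref{etabpdeI})) is not established.

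Secondary but real bookkeeping errors: your displayed balance relation $q'b^k+(\xi_1b^{k-1}+\dots+\xi_k)=q+n$ puts the weights on the wrong carries (feeding digits least-significant-first, the \emph{initial} state enters with weight $b^0$ and the \emph{final} state with weight $b^k$, i.e.\ $q'+\sum\xi_ib^{k-i}=qb^k+n$); you then leave literal question marks in the computation before asserting the correct final entry $\mathcal N_k(n+pb^k-q')$, so the key identity is stated rather than derived. In part (iii), ``$q\le a<b$'' is false in general (e.g.\ $b=2$, $d=5$ gives $a=3$), so one padding zero does not suffice and $k$ need not be $h$ or $h+1$; the flushing argument still works because $\lfloor q/b\rfloor<q$ for $q\ge1$, but as written the quantitative claims are wrong.
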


\begin{proof}(i) In (\ref{thematrix}) we have $p_{j+bq-q'}=\frac1d$ when $j+bq-q'\in\{0,\dots,d-1\}$ that is, when a edge of output label $j$ relates $q'$ to $q$ in $\mathcal T$.

(ii) Consequently the entry of the $(q+1)^{\rm th}$ row and $(q'+1)^{\rm th}$ column of $^t(dM_{\varepsilon_k})\dots^t(dM_{\varepsilon_1})$ is the number of paths from $q$ to $q'$ whose output label is $\varepsilon_k\dots\varepsilon_1$. The input label $\omega_k\dots\omega_1$ and the states $q_k,\dots,q_0$ of such a path satisfy

\begin{equation}\label{DivEucl}
\begin{array}{l}q_k+\omega_k=bq_{k-1}+\varepsilon_k\ (\hbox{with }q_k=q)\\
q_{k-1}+\omega_{k-1}=bq_{k-2}+\varepsilon_{k-1}\\
\vdots\\
q_1+\omega_1=bq_0+\varepsilon_1\ (\hbox{with }q_0=q').
\end{array}
\end{equation}

We deduce $\sum_{i=1}^k(q_i+\omega_i)b^{k-i}=b\sum_{i=1}^kq_{i-1}b^{k-i}+\sum_{i=1}^k\varepsilon_ib^{k-i}$ and, after simplification,
\begin{equation}\label{reprk}
\sum_{i=1}^k\omega_ib^{k-i}=q'b^k-q+\sum_{i=1}^k\varepsilon_ib^{k-i}
\end{equation}
meaning that $\omega_1\dots\omega_k$ is a representation of length $k$ of $n+b^kq'-q$.

Conversely if (\ref{reprk}) holds, then (\ref{DivEucl}) holds with
$$
q_i=q'b^i+\sum_{j\le i}(\varepsilon_j-\omega_j)b^{i-j}.
$$
It remains to prove that (\ref{DivEucl}) implies $q_i\le a$ by descending induction. Since $q_k=q$ one has $q_k\le a$. If $q_i\le a$, the Euclidean divisions in (\ref{DivEucl}) imply
$$
q_{i-1}=\left\lfloor\frac{q_i+\omega_i}b\right\rfloor\le\left\lfloor\frac{a+d-1}b\right\rfloor
$$
where the r.h.s. is at most $a$ because $\left\lfloor\frac{a+d-1}b\right\rfloor\le\frac{a+d-1}b<\frac{\frac{d-1}{b-1}+d-1}b=\frac{d-1}{b-1}\le\left\lceil\frac{d-1}{b-1}\right\rceil=a+1$.

(iii) In particular, if (\ref{reprk}) holds with $q=q'=0$ then $\omega_k\dots\omega_1$ is the input label and $\varepsilon_k\dots\varepsilon_1$ the output label of a path from the state $0$ to the state $0$. Suppose that (\ref{expectedformula}) holds. Equivalently, (\ref{reprk}) holds with $q=q'=0$, with $\omega_1\dots\omega_k=0^{h-k}\xi_{h-1}\dots\xi_0$ and $\varepsilon_1\dots\varepsilon_k=\zeta_{k-1},\dots,\zeta_0$. This proves that, if we enter the digits $\xi_0,\dots,\xi_{h-1},0,0,\dots$ from the initial state $0$, the output digits are $\zeta_0,\dots,\zeta_{k-1}$.
\hfill\end{proof}

\begin{thm}\label{levels}
In the case $p_0=\dots=p_{d-1}=\frac1d$ the level sets of the measure $\eta_{b,p}$, that is, the sets
$$
E(\alpha):=\Big\{x\in\mathbb R\;:\;\lim_{r\to0}\frac{\log\eta_{b,p}\big((x-r,x+r)\big)}{\log r}=\alpha\Big\},
$$
are related to the level sets $\mathcal E(\cdot)$ of the function $\mathcal N$ by the inequality
\begin{equation}\label{analogic}
\hbox{\rm H-dim}(E(\alpha))\le\hbox{d}_-^{\rm{exp}}\Big(\mathcal E\Big(\frac{\log d}{\log b}-\alpha\Big)\Big).
\end{equation}
\end{thm}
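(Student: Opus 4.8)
The plan is to replace the Euclidean balls in the definition of $E(\alpha)$ by $b$-adic cylinders and then run the covering argument from the proof of Proposition \ref{twolevel}; the shift of $\alpha$ by $\frac{\log d}{\log b}$ will come entirely from the factor $d^{-k}$ in Proposition \ref{rightleft}(ii), which expresses the cylinder masses of $\eta_{b,p}$ through the counting function $\mathcal N$.

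\emph{Reduction to cylinders.} The support of $\eta_{b,p}$ is $[0,\frac{d-1}{b-1}]\subseteq[0,a+1]$, so $E(\alpha)$ meets only the finitely many unit blocks $[q,q+1)$, $0\le q\le a$, and, Hausdorff dimension being stable under finite unions, it is enough to bound $\hbox{\rm H-dim}\bigl(E(\alpha)\cap[q,q+1)\bigr)$ for each $q$. Fix such a $q$ and an $x\in E(\alpha)\cap[q,q+1)$. For every large $\kappa$ let $U_\kappa(x)$ be the union of the (at most three) consecutive $b$-adic intervals of level $\kappa$ that meet $(x-b^{-\kappa},x+b^{-\kappa})$; then $(x-b^{-\kappa},x+b^{-\kappa})\subseteq U_\kappa(x)\subseteq(x-3b^{-\kappa},x+3b^{-\kappa})$, and since $x\in E(\alpha)$ this forces the two-sided estimate $b^{-\kappa(\alpha+\varepsilon)}\le\eta_{b,p}(U_\kappa(x))\le b^{-\kappa(\alpha-\varepsilon)}$ for all large $\kappa$ (any $\varepsilon>0$).

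\emph{Covering.} As in the proof of Proposition \ref{twolevel}, for each $x$ choose $\kappa=\kappa(x)$ arbitrarily large and with $b^{\kappa}$ in the infinite set $E$ given by the definition of $\hbox{d}_-^{\rm{exp}}\bigl(\mathcal E(\frac{\log d}{\log b}-\alpha,\frac1k)\bigr)$; the sets $U_{\kappa(x)}(x)$, of diameter $\le 3b^{-\kappa(x)}$, cover $E(\alpha)\cap[q,q+1)$, and at a fixed level $\kappa_0$ there are only finitely many distinct ones, each determined up to a bounded ambiguity by $\lfloor b^{\kappa_0}x\rfloor$. By Proposition \ref{rightleft}(ii), $d^{\kappa_0}\eta_{b,p}(U_{\kappa_0}(x))$ is a sum of boundedly many terms $\mathcal N_{\kappa_0}(M)$ with $M$ ranging over an interval of bounded length around $\lfloor b^{\kappa_0}x\rfloor$, hence $\eta_{b,p}(U_{\kappa_0}(x))$ is, on the logarithmic scale, comparable to $\mathcal N_{\kappa_0}(M^*)/d^{\kappa_0}$ for one such $M^*$; with the two-sided estimate and $\log M^*=\kappa_0\log b+O(1)$ this gives $\frac{\log\mathcal N_{\kappa_0}(M^*)}{\log M^*}\in[\frac{\log d}{\log b}-\alpha-\frac1k,\frac{\log d}{\log b}-\alpha+\frac1k]$ once $\kappa_0$ is large. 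Thus the distinct cover elements of level $\kappa_0$ correspond boundedly-to-one to integers of $\mathcal E(\frac{\log d}{\log b}-\alpha,\frac1k)\cap[1,b^{\kappa_0+O(1)})$, and summing the $s$-th powers of the diameters exactly as in Proposition \ref{twolevel} gives $\hbox{\rm H-dim}\bigl(E(\alpha)\cap[q,q+1)\bigr)\le\hbox{d}_-^{\rm{exp}}\bigl(\mathcal E(\frac{\log d}{\log b}-\alpha,\frac1k)\bigr)$; letting $k\to\infty$ and using Theorem \ref{sets} yields the theorem.

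\emph{Main obstacle.} The non-routine point is comparing $\mathcal N_\kappa(M)$ for arguments $M$ of size $b^{\kappa+O(1)}$ but $\ge b^{\kappa}$ (which occur for $q\ge1$) with the unrestricted function $\mathcal N$: for $q=0$ one has $M<b^{\kappa}$ and $\mathcal N_\kappa(M)=\mathcal N(M)$, but for $q\ge1$ the two genuinely differ. By the transducer description in Proposition \ref{rightleft}, $\mathcal N_\kappa(n+q'b^{\kappa})$ equals the number of paths of the normalization transducer $\mathcal T$ from its state $0$ to its state $q'$ whose output is the base-$b$ word of $n$, and what one needs is that for a prescribed long output word the number of such paths between any two of the finitely many states of $\mathcal T$ stays within a ratio bounded independently of the word — a primitivity statement for the family $\{M_0,\dots,M_{b-1}\}$, whose sum is irreducible by Remark \ref{uniq}, so that $\sum_{q'}\mathcal N_\kappa(n+q'b^{\kappa})\asymp\mathcal N(n)$. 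Combined with the elementary remark that shifting the argument of $\mathcal N$ by a bounded amount affects neither $\lim\frac{\log\mathcal N(m)}{\log m}$ nor the exponential densities, this is exactly what lets one replace $\mathcal N_{\kappa_0}(M^*)$ above by $\mathcal N$ of a nearby integer. I expect proving this primitivity property to be where the real work lies; the rest is the bookkeeping of Proposition \ref{twolevel}.
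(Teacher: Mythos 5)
Your overall strategy coincides with the paper's: replace balls by $b$-adic cylinders, run the covering argument of Proposition \ref{twolevel}, and extract the shift $\frac{\log d}{\log b}$ from the factor $d^{-k}$ in Proposition \ref{rightleft}(ii). But the step you postpone to your last paragraph --- the ``primitivity property'' --- is precisely where the content of the theorem lies, and the statement you propose to prove there is false. Irreducibility of $\sum_jM_j$ (Remark \ref{uniq}) does not make the path counts between any two states of $\mathcal T$ comparable within a word-independent ratio. Take $b=2$, $d=3$, so $a=1$ and $3M_1=\bigl(\begin{smallmatrix}1&1\\0&1\end{smallmatrix}\bigr)$: for the output word $1^k$ Proposition \ref{rightleft}(ii) gives $3^kM_1^k=\bigl(\begin{smallmatrix}1&k\\0&1\end{smallmatrix}\bigr)$, i.e.\ $\mathcal N_k(2^{k+1}-1)=0$ while $\mathcal N_k(2^k-1)=1$, and $\mathcal N_k(2^k-2)/\mathcal N_k(2^k-1)=k$ is unbounded; for the word $0^k$ one has $\mathcal N_k(0)=1$ but $\sum_{q'}\mathcal N_k(q'2^k)=k+1$, refuting $\sum_{q'}\mathcal N_\kappa(n+q'b^\kappa)\asymp\mathcal N(n)$ as well. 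Even on the first row, where $\mathcal N_k=\mathcal N$, the true comparison between neighbouring arguments is only logarithmic, namely (\ref{lem9}): $\mathcal N(n)/\mathcal N(n-1)\in[1/(K\log n),\,K\log n]$ --- enough on the $\log/\log$ scale, but not a bounded ratio. So the lemma your argument rests on needs to be replaced by something genuinely weaker, and you have not indicated what that is or why it would suffice.

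For comparison, the paper avoids any comparison of $\mathcal N_\kappa$ at arguments shifted by multiples of $b^\kappa$. The case $q=0$ uses only the first row of the matrix product, where $\mathcal N_k=\mathcal N$ and (\ref{lem9}) applies; this yields (\ref{firstep}) for $E(\alpha)\cap(0,1)$ via Proposition \ref{twolevel} applied to $\eta_{0,symb}$. For $0<q<a$ it fixes $k_0$, restricts to words with $\varepsilon_1\dots\varepsilon_{k_0}\ne0^{k_0}$, and shows that every row of $M_{\varepsilon_1}\dots M_{\varepsilon_{k_0+a}}$ except possibly the last is positive, whence $\eta(q+I_{\varepsilon_1\dots\varepsilon_k})/\eta(I_{\varepsilon_1\dots\varepsilon_k})\in[1/M(k_0),M(k_0)]$ with a constant depending only on $k_0$ --- harmless because $k_0$ is fixed while $k\to\infty$, and the union over $k_0$ exhausts $(q,q+1)$. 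This still says nothing about the last state $q=a$ (the last row can vanish, exactly as in the $1^k$ example), and the paper disposes of the rightmost part of the support by the symmetry of $\eta_{b,p}$ about the midpoint of $[0,\frac{d-1}{b-1}]$. Your proposal contains neither the nonzero-prefix device (nor any correct substitute for the false uniform bound) nor the symmetry argument, so the case $q\ge1$ is not actually handled.
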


\begin{proof}
We first consider the measure $\eta_{0,symb}$ on the symbolic space $\{0,\dots,b-1\}^{\mathbb N}$ and we prove that the level sets $E_{0,symb}(\alpha)$ of this measure, defined as in (\ref{symbolicdef}), satisfy (\ref{analogic}). The condition of Proposition \ref{twolevel} is satisfied: indeed (\ref{values}) implies $\frac{\eta_{0,symb}[0\varepsilon_1\dots\varepsilon_k]}{\eta_{0,symb}[\varepsilon_1\dots\varepsilon_k]}=\frac1d$ and thus $\lim_{k\to\infty}\frac{\log\eta_{0,symb}[0\varepsilon_1\dots\varepsilon_k]}{\log\eta_{0,symb}[\varepsilon_1\dots\varepsilon_k]}=1$. For any $n=_{_b}\varepsilon_1b^{k-1}+\dots+\varepsilon_kb^0$ one has $b^{k-1}\le n<b^k$ and consequently $k=k(n)=1+\left\lfloor\frac{\log n}{\log b}\right\rfloor$. By (\ref{etabpdeI}),
$$
\eta_{0,symb}[\varepsilon_1\dots\varepsilon_{k(n)}]=\frac1{d^{k(n)}\eta([0,1))}\begin{pmatrix}\mathcal N(n)&\dots&\mathcal N(n-a)\end{pmatrix}C.
$$
Let $f(n)=\eta_{0,symb}[\varepsilon_1\dots\varepsilon_{k(n)}]$, by Proposition \ref{twolevel} one has
$$
\hbox{\rm H-dim}\big(E_{0,symb}(\alpha)\big)\le\hbox{d}_-^{\rm{exp}}(\mathcal E_f(-\alpha)).
$$
The function $g(n):=\frac1{d^{k(n)}}\mathcal N(n)$ has the same density spectums as $f$ because, from \cite[Lemma~9~(ii)]{FLT}, there exists a positive constant $K$ such that
\begin{equation}\label{lem9}
\frac1{K\log n}\le\frac{\mathcal N(n)}{\mathcal N(n-1)}\le K\log n.
\end{equation}
Since $\lim_{n\to\infty}\big(\frac{\log\mathcal N(n)}{\log n}-\frac{\log g(n)}{\log n}\big)=\frac{\log d}{\log b}$, the level sets $\mathcal E(\cdot)$ of $\mathcal N$ satisfy $\hbox{d}_-^{\rm{exp}}\Big(\mathcal E\Big(\frac{\log d}{\log b}-\alpha\Big)\Big)=\hbox{d}_-^{\rm{exp}}(\mathcal E_g(-\alpha))=\hbox{d}_-^{\rm{exp}}(\mathcal E_f(-\alpha))$ and consequently
\begin{equation}\label{firstep}
\hbox{\rm H-dim}\big(E_{0,symb}(\alpha)\big)\le\hbox{d}_-^{\rm{exp}}\Big(\mathcal E\Big(\frac{\log d}{\log b}-\alpha\Big)\Big).
\end{equation}

Let us now prove by a classical method that the real $x=_{_b}\sum_{k=1}^\infty\frac{\varepsilon_k}{b^k}$ belongs to $E(\alpha)\cap(0,1)$ if and only if $(\varepsilon_k)_k$ belongs to $E_{0,symb}(\alpha)\setminus\{\bar0\}$. This is due to the fact that $I_{\varepsilon_1\dots\varepsilon_k}\subset\big(x-\frac1{b^k},x+\frac1{b^k}\big)$, and conversely $(x-r,x+r)\subset I_w\cup I_{w'}\cup I_{w''}$ holds for $\frac1{b^{k+1}}\le r<\frac1{b^k}$, the words $w'$ and $w''$ being respectively the lexicographical predecessor and the lexicographical successor of $w=\varepsilon_1\dots\varepsilon_k$. We use of course the relation (\ref{etabpdeI}) and the inequality (\ref{lem9}). Let us now prove that
\begin{equation}\label{EE}
\hbox{\rm H-dim}\big(E(\alpha)\cap[0,1)\big)=\hbox{\rm H-dim}\big(E_{0,symb}(\alpha)\big).
\end{equation}
To any cover of $E_{0,symb}(\alpha)$ corresponds a cover of $E(\alpha)\cap[0,1)$, and conversely to any cover of $E(\alpha)\cap[0,1)$ and to any interval $(a,a')$ of this cover, correspond two cylinder sets $[w],[w']$ such that $(a,a')\subset I_w\cup I_{w'}$, where the words $w,w'$ have length $k$ such that $\frac1{b^{k+1}}\le a'-a<\frac1{b^k}$. One deduce easily that (\ref{EE}) holds.

The measure $\eta_{b,p}$ is obviously symmetrical with respect to the middle of its support $\big[0,\frac{d-1}{b-1}\big]$. So $E(\alpha)$ is a symmetric subset of $\big[0,\frac{d-1}{b-1}\big]$ of $\eta_{b,p}$, and it remains to prove that $E(\alpha)\cap(q,q+1)=q+E(\alpha)\cap(0,1)$ for any positive integer $q<\frac12\frac{d-1}{b-1}$ (hence $q\ne a$). Given a positive integer $k_0$, we consider the $b$-adic intervals $I_{\varepsilon_1\dots \varepsilon_k}$ such that $k\ge k_0+a$ and $\varepsilon_1\dots \varepsilon_{k_0}\ne0^{k_0}$. The entries of $M_{\varepsilon_1}\dots M_{\varepsilon_{k_0+a}}$ are positive, except possibly the ones of its last row: this is due to the fact that the entries of $dM_j$ are at least equal to the ones of $\begin{pmatrix}1&1&0&0&\dots&0&0&0&0\\1&1&1&0&\dots&0&0&0&0\\\vdots&\vdots&\vdots&\vdots&\ddots&\vdots&\vdots&\vdots&\vdots\\0&0&0&0&\dots&0&1&1&1\\0&0&0&0&\dots&0&0&0&0\end{pmatrix}$, except the second entry of the first row of $dM_0$, this entry being $0$. Denoting by $M(k_0)$ the largest entry of the matrix (with positive integral entries) $d^{k_0+a}E_qM_{\varepsilon_1}\dots M_{\varepsilon_{k_0+a}}$ for any $q\in\{0,\dots,a-1\}$ and $\varepsilon_1\dots\varepsilon_{k_0+a}\in\{0,\dots,b-1\}^{k_0+a}$ such that $\varepsilon_1\dots \varepsilon_{k_0}\ne0^{k_0}$, the formula (\ref{values}) implies
$$
\frac1{M(k_0)}\le\frac{\eta(q+I_{\varepsilon_1\dots \varepsilon_k})}{\eta(I_{\varepsilon_1\dots \varepsilon_k})}\le M(k_0).
$$
Since it is true for any $k_0\in\mathbb N$ and $q\in\{0,\dots,a-1\}$, one deduce $E(\alpha)\cap(q,q+1)=q+E(\alpha)\cap(0,1)$ for $0<q<a$, and (\ref{analogic}) follows from (\ref{firstep}), (\ref{EE}) and from the symmetry of $E(\alpha)$.
\end{proof}

\section{The general framework in Pisot base}

The normalization map \cite{FS} in the integral or Pisot base $\beta>1$, associates to each sequence $(\omega_i)_{i\in\mathbb N}\in\{0,1,\dots,d-1\}^{\mathbb N}$ the sequence $(\varepsilon_i)_{i\in\mathbb Z}\in\{0,1,\dots,\lceil\beta\rceil-1\}^{\mathbb Z}$ satisfying both conditions:
\begin{equation}\label{omegaepsilon}
\begin{array}{l}\displaystyle\sum_{i\in\mathbb N}\frac{\omega_i}{\beta^i}=\sum_{i\in\mathbb Z}\frac{\varepsilon_i}{\beta^i}\\\displaystyle\forall i\in\mathbb Z,\ \sum_{j>i}\frac{\varepsilon_j}{\beta^j}<\frac1{\beta^i}\hbox{ (Parry $\beta$-admissibility condition \cite{P})}.\end{array}
\end{equation}

\subsection{The tranducer $\mathcal T$ associated to $\beta$ and $d$}\label{thetr}

The states of the transducer are the carries of the normalization of the sequences $(\omega_i)_{i\in\mathbb N}$ with digits in $\{0,1,\dots,d-1\}$. More precisely suppose that (\ref{omegaepsilon}) holds and put $\omega_i=0$ for any $i\le0$; then for each $i\in\mathbb Z$ the sum $\displaystyle\sum_{j>i}\frac{\omega_j}{\beta^j}$ is equal to $\displaystyle\sum_{j>i}\frac{\varepsilon_j}{\beta^j}$ plus a real number that we denote by $\displaystyle\frac{q_i}{\beta^i}$. So we call "the $i^{\rm th}$ carry", the real $q_i$ defined by both relations
\begin{equation}\label{carry}
\frac{q_i}{\beta^i}=\sum_{j>i}\frac{\omega_j-\varepsilon_j}{\beta^j}=\sum_{j\le i}\frac{\varepsilon_j-\omega_j}{\beta^j}.
\end{equation}
The first relation implies $q_i\in\ (-1,\alpha]$ with $\alpha:=\frac{d-1}{\beta-1}$. The second relation implies
\begin{equation}\label{Eucl-beta}
q_i=\beta q_{i-1}-\omega_i+\varepsilon_i
\end{equation}
and implies that $q_i$ belongs to the set
$$
S_{\beta,d}:=\left\{\sum_{j=0}^{i-1}\alpha_j\beta^j\;:\;i\in\mathbb N,\ \alpha_j\in(-d,\lceil\beta\rceil)\cap\mathbb Z,\right\}\cap\ (-1,\alpha].
$$
Garsia's separation lemma \cite{G} ensuring that $S_{\beta,d}$ is finite, we can chose $S_{\beta,d}$ as set of states of $\mathcal T$ and assume that the arrows have the form 
$q{{\omega\ /\ \varepsilon\atop\xrightarrow{\hspace*{50pt}}}\atop\phantom{}}\beta q-\omega+\varepsilon$. Notice that the arrows are in the opposite direction of the ones considered in integral base because, as can be seen for instance in Example \ref{ex}, the Euclidean division that we use for the normalization in integral base do not have analogue in non-integral base; more precisely, the relation $q'+\omega=\beta q+\varepsilon$ may hold for several values of $q$ when $q'$ and $\omega$ are fixed.

In practice we construct at the same time a suitable set of states $S'_{\beta,d}\subset S_{\beta,d}$ and the transducer: $S'_{\beta,d}$ is by definition the smallest set containing $0$ and containing $\beta q-\omega+\varepsilon$, whenever the three reals $q\in S'_{\beta,d}$, $\omega\in\{0,1,\dots,d-1\}$, $\varepsilon\in\{0,1,\dots,\lceil\beta\rceil-1\}$ satisfy the condition $\beta q-\omega+\varepsilon\in(-1,\alpha]$.

\begin{exmp}\label{ex}The transducer $\mathcal T$ when $\beta^2=3\beta-1$ and $d=3$:

\includegraphics[trim=0 230 0 40,clip,scale=0.42]{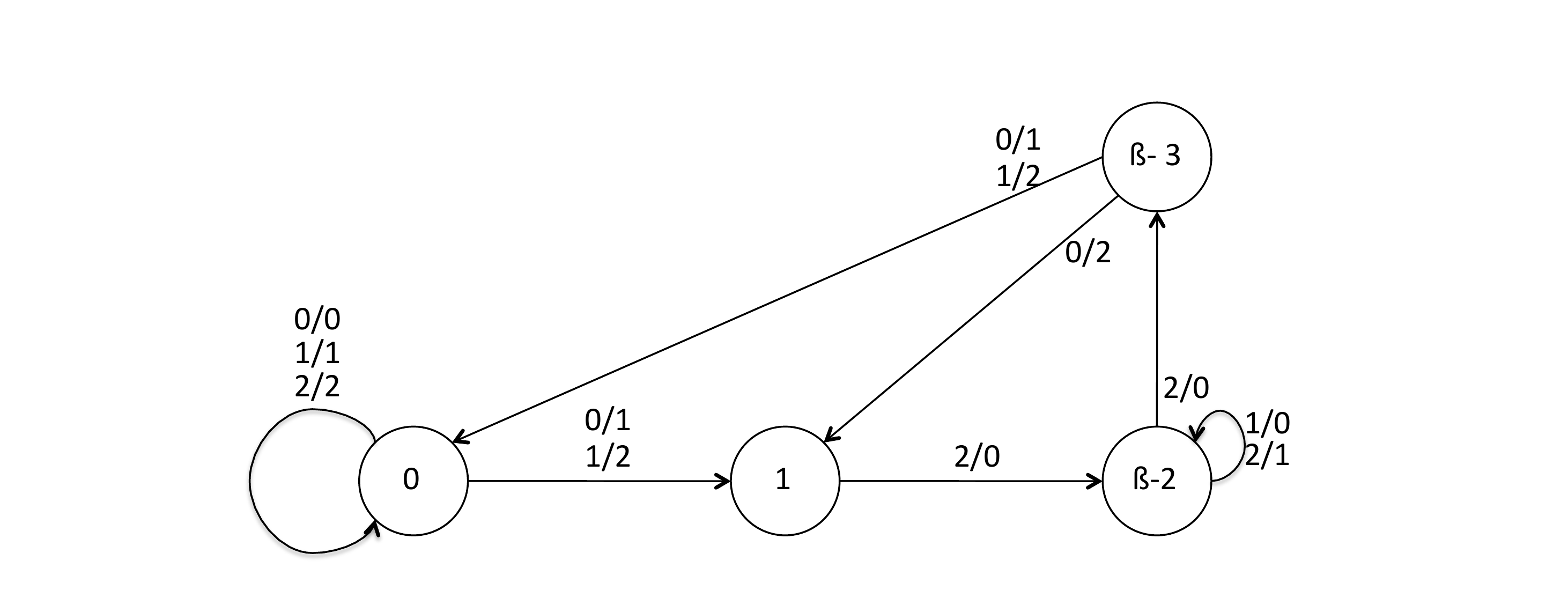}

For instance $3-\beta=-\beta^2+2\beta+2=\frac1\beta$ belongs to $S_{\beta,d}$ but do not belong to $S'_{\beta,d}$.

\end{exmp}

\subsection{How to normalize a sequence $(\omega_i)_{i\in\mathbb N}$?}

We first notice that in a non-integral base there do not exist a literal transducer of normalization. For instance in base $\beta=\frac{1+\sqrt5}2$, if we normalize from the left to the right a sequence of the form $(01)^nx$ without knowing if the digit $x$ is $0$ or~$1$, we obtain $(01)^n0$ if $x=0$ and $1(00)^n$ if $x=1$. So all the terms of the normalized sequence depend on the value of $x$, while the successive carries cannot depend on $x$ because $x$ is at the right. As well if we normalize from the right to the left a sequence of the form $0x(11)^n$ without knowing if the digit $x$ is $0$ or $1$, we obtain $0(10)^n0$ if $x=0$ and $(10)^n01$ if $x=1$.


\begin{pro}
Let $(\omega_i)_{i\in\mathbb N}\in\{0,1,\dots,d-1\}^{\mathbb N}$. The normalized sequence defined in (\ref{omegaepsilon}) is the unique $\beta$-admissible sequence which is the output label of a bi-infinite path of input label $\bar0\omega_1\omega_2\omega_3\dots$.
\end{pro}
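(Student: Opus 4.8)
The plan is to unwind the definitions and show the two halves separately: every $\beta$-admissible output label arising from a bi-infinite path with input $\bar 0\omega_1\omega_2\dots$ equals the normalized sequence $(\varepsilon_i)_{i\in\mathbb Z}$, and conversely $(\varepsilon_i)$ itself arises this way. First I would recall that, by the construction of $\mathcal T$ in Section~\ref{thetr}, a bi-infinite path with input label $(\omega_i)_{i\in\mathbb Z}$ (with $\omega_i=0$ for $i\le 0$) and output label $(\varepsilon_i)_{i\in\mathbb Z}$ passes through states $(q_i)_{i\in\mathbb Z}$ with $q_i\in S_{\beta,d}$ satisfying the transition relation (\ref{Eucl-beta}), namely $q_i=\beta q_{i-1}-\omega_i+\varepsilon_i$. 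Telescoping (\ref{Eucl-beta}) over a block $i_0<i\le i_1$ gives $\frac{q_{i_1}}{\beta^{i_1}}-\frac{q_{i_0}}{\beta^{i_0}}=\sum_{i_0<i\le i_1}\frac{\varepsilon_i-\omega_i}{\beta^i}$, and since all $q_i$ lie in the bounded set $(-1,\alpha]$, letting $i_0\to-\infty$ and $i_1\to+\infty$ forces $\sum_{i\in\mathbb Z}\frac{\omega_i}{\beta^i}=\sum_{i\in\mathbb Z}\frac{\varepsilon_i}{\beta^i}$, i.e. the first equation of (\ref{omegaepsilon}). So any output label of such a path that is $\beta$-admissible is \emph{a} normalization of $(\omega_i)$; uniqueness of the $\beta$-admissible representation of a given real (Parry \cite{P}) then shows it must be exactly $(\varepsilon_i)_{i\in\mathbb Z}$.

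Next I would establish the converse, that the genuine normalized sequence $(\varepsilon_i)_{i\in\mathbb Z}$ \emph{is} the output label of a path with the prescribed input. Here I would invoke the very definition of the carries $q_i$ in (\ref{carry}): the first formula $\frac{q_i}{\beta^i}=\sum_{j>i}\frac{\omega_j-\varepsilon_j}{\beta^j}$ shows $q_i\in(-1,\alpha]$ (as already noted after (\ref{carry})), and it also shows $q_i\in S_{\beta,d}$ via the second formula $\frac{q_i}{\beta^i}=\sum_{j\le i}\frac{\varepsilon_j-\omega_j}{\beta^j}$, which exhibits $q_i$ as a polynomial in $\beta$ with the required coefficient bounds. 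The relation (\ref{Eucl-beta}) holds by construction. Hence $q\xrightarrow{\ \omega_i/\varepsilon_i\ }q'$ with $q=q_{i-1}$, $q'=q_i$ is literally an arrow of $\mathcal T$ (on the set of states $S_{\beta,d}$), and stringing these together for all $i\in\mathbb Z$ yields a bi-infinite path with input label $(\omega_i)_{i\in\mathbb Z}=\bar 0\omega_1\omega_2\dots$ and output label $(\varepsilon_i)_{i\in\mathbb Z}$. This proves existence.

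The one point that needs a little care — and what I expect to be the main obstacle — is the matter of which set of states is really in play and the direction of the arrows. The proposition as stated should be read with the arrows of $\mathcal T$ oriented as in Section~\ref{thetr}, $q\xrightarrow{\omega/\varepsilon}\beta q-\omega+\varepsilon$; a ``bi-infinite path'' then reads the input from left to right (increasing $i$), and one must check that a bi-infinite \emph{one-sided-null} input forces the path to sit inside the finite state set rather than wandering off, which is exactly what the boundedness $q_i\in(-1,\alpha]$ delivers. I would also remark that the practical state set $S'_{\beta,d}$ suffices: since $q_i$ is obtained from $q_{i-1}$ by (\ref{Eucl-beta}) starting from $q_0=0$ (because $\omega_i=0$ for $i\le 0$ makes all earlier carries vanish, cf. (\ref{carry})), an easy induction shows every reachable carry lies in $S'_{\beta,d}$, so the path in fact lives in the transducer as actually drawn. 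Finally, I would close by noting uniqueness once more: two distinct $\beta$-admissible output labels of paths with the same input would give two $\beta$-admissible expansions of the same real $\sum_i\omega_i\beta^{-i}$, contradicting Parry's theorem, so the $\beta$-admissible output label is unique and equals the normalization.
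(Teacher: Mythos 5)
Your overall strategy coincides with the paper's: telescope the carry relation (\ref{Eucl-beta}) along the path to compare $\sum_i\omega_i\beta^{-i}$ with $\sum_i\varepsilon_i\beta^{-i}$, invoke uniqueness of the admissible expansion of a real for the ``only one'' direction, and read existence directly off the definition (\ref{carry}) of the carries. There is, however, one genuine gap, precisely at the step where you let $i_0\to-\infty$ in the telescoped identity $\frac{q_{i_1}}{\beta^{i_1}}-\frac{q_{i_0}}{\beta^{i_0}}=\sum_{i_0<i\le i_1}\frac{\varepsilon_i-\omega_i}{\beta^i}$. Boundedness of the states does handle the end $i_1\to+\infty$, where $\beta^{i_1}\to\infty$; but at the other end $\beta^{i_0}\to0$, so $q_{i_0}\in(-1,\alpha]$ gives no control at all on $q_{i_0}/\beta^{i_0}$, which a priori could diverge (this would correspond exactly to an output with infinitely many nonzero digits to the left). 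The paper treats the two limits separately and supplies the missing ingredient for this one: for $i\le0$ one has $\omega_i=0$, hence $\frac{q_{i-1}}{\beta^{i-1}}=\frac{q_i}{\beta^i}-\frac{\varepsilon_i}{\beta^i}$, so $i\mapsto q_i/\beta^i$ is monotone on $i\le0$; combined with the fact that the admissible sequence represents a real number (so $\varepsilon_i=0$ for $i$ sufficiently negative) and with the finiteness of the state set (Garsia), this forces $q_i=0$ for all sufficiently negative $i$, whence the limit is $0$. Some argument of this kind is needed; ``the $q_i$ are bounded'' is not enough.

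A second, smaller slip: in your existence half you assert that the path starts from $q_0=0$ ``because $\omega_i=0$ for $i\le0$ makes all earlier carries vanish.'' By (\ref{carry}), $q_0=\sum_{j\le0}\varepsilon_j\beta^{-j}$ is the integer-part contribution of the normalized expansion and is in general nonzero. What is true, and what your induction on reachable states should start from, is that $q_i=0$ for all $i<-(h-1)$, i.e.\ to the left of the leading output digit; from there the closure of $S'_{\beta,d}$ under $q\mapsto\beta q-\omega+\varepsilon$ subject to $\beta q-\omega+\varepsilon\in(-1,\alpha]$ does place every carry in $S'_{\beta,d}$ as you claim. With these two repairs your argument is essentially the paper's.
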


\begin{proof}
If (\ref{omegaepsilon}) holds, as noted above the states $q_i$ defined by (\ref{carry}) satisfy (\ref{Eucl-beta}), so $(\varepsilon_i)_{i\in\mathbb Z}$ is the output label of a bi-infinite path of input label $\bar0\omega_1\omega_2\omega_3\dots$.

Conversely if $(\varepsilon_i)_{i\in\mathbb Z}$ is $\beta$-admissible and is the output label of a bi-infinite path of input label $\bar0\omega_1\omega_2\omega_3\dots$, one has for any $i<i'$ in $\mathbb Z$
\begin{equation}\label{i'}
\frac{q_i}{\beta^i}=\sum_{i<j\le i'}\frac{\omega_j-\varepsilon_j}{\beta^j}+\frac{q_{i'}}{\beta^{i'}}
\end{equation}
which implies $\displaystyle\sum_{i\in\mathbb N}\frac{\omega_i}{\beta^i}=\sum_{i\in\mathbb Z}\frac{\varepsilon_i}{\beta^i}$ because $\displaystyle\lim_{i'\to+\infty}\frac{q_{i'}}{\beta^{i'}}=0$ (obvious) and $\displaystyle\lim_{i\to-\infty}\frac{q_i}{\beta^i}=0$ (for $q_{i-1}<q_i$ when $\omega_i=0$).
\hfill\end{proof}

\subsection{The number of redundant representations}For any finite sequence $\varepsilon_1\dots\varepsilon_k\in\{0,1,\dots,d-1\}^k$ we denote by $\mathcal N(\varepsilon_1\dots\varepsilon_k)$ the number of $\omega_1\dots\omega_k\in\{0,1,\dots,d-1\}^k$ such that
\begin{equation}\label{rep}
\omega_1\beta^{k-1}+\dots+\omega_k\beta^0=\varepsilon_1\beta^{k-1}+\dots+\varepsilon_k\beta^0.
\end{equation}
Notice that, this time, the carries $q_i=\sum_{j=i+1}^k\frac{\omega_j-\varepsilon_j}{\beta^{j-i}}$ do not belong to $(-1,\alpha]$ but to $(-\alpha,\alpha)$. The transducer $\mathcal T'$ we consider has the same arrows as $\mathcal T$, but its set of states $\{\mathfrak i_0=0,\mathfrak i_1,\dots,\mathfrak i_a\}$ is the smallest set containing $0$ and containing $\beta q-\omega+\varepsilon$ for any $q\in\{\mathfrak i_0=0,\dots,\mathfrak i_a\}$, $\omega\in\{0,1,\dots,d-1\}$, $\varepsilon\in\{0,1,\dots,\lceil\beta\rceil-1\}$ such that $\beta q-\omega+\varepsilon\in(-\alpha,\alpha)$.

\begin{pro}
$\mathcal N(\varepsilon_1\dots\varepsilon_k)$ is the number of paths in $\mathcal T'$, from $0$ to~$0$, with output label $\varepsilon_1\dots\varepsilon_k$. Equivalently,
$$
\mathcal N(\varepsilon_1\dots\varepsilon_k)=\begin{pmatrix}1&0&\dots&0\end{pmatrix}N_{\varepsilon_1}\dots N_{\varepsilon_k}\begin{pmatrix}1\\0\\\vdots\\0\end{pmatrix}
$$
where the $(0,1)$-matrices $N_\ell=(n^{\ell}_{ij})_{0\le i\le a\atop0\le j\le a}$, $\ell\in\{0,1,\dots,d-1\}$ are defined by
$$
n^{\ell}_{ij}=1\Leftrightarrow\beta\mathfrak i_i-\mathfrak i_j+\ell\in\{0,1,\dots,d-1\}.
$$
\end{pro}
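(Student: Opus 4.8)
The plan is to set up a bijection between the words $\omega_1\dots\omega_k\in\{0,1,\dots,d-1\}^k$ counted by $\mathcal N(\varepsilon_1\dots\varepsilon_k)$ and the paths from $0$ to $0$ in $\mathcal T'$ with output label $\varepsilon_1\dots\varepsilon_k$, and then to read the path-count off a product of incidence matrices.

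First I would attach to each $\omega_1\dots\omega_k$ the carries already introduced above, $q_k:=0$ and $q_i:=\sum_{j=i+1}^k\frac{\omega_j-\varepsilon_j}{\beta^{j-i}}$ for $0\le i<k$. The same elementary manipulation as for (\ref{Eucl-beta}) gives the transition relation $q_i=\beta q_{i-1}-\omega_i+\varepsilon_i$, while $q_0=\beta^{-k}\big(\sum_j\omega_j\beta^{k-j}-\sum_j\varepsilon_j\beta^{k-j}\big)$; hence $\omega_1\dots\omega_k$ satisfies (\ref{rep}) if and only if $q_0=0$. As recalled just before the statement, $|q_i|\le(d-1)\sum_{m\ge1}\beta^{-m}=\alpha$, with strict inequality because the sum defining $q_i$ is finite, so every $q_i$ lies in $(-\alpha,\alpha)$; combined with $q_0=0=\mathfrak i_0$ and the transition relation, an immediate induction shows that all the $q_i$ are among the states $\mathfrak i_0,\dots,\mathfrak i_a$ of $\mathcal T'$ (by definition the smallest set containing $0$ and closed under the transitions whose value lies in $(-\alpha,\alpha)$). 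Therefore, whenever (\ref{rep}) holds, $0=q_0\to q_1\to\cdots\to q_k=0$ is a path in $\mathcal T'$ with input label $\omega_1\dots\omega_k$ and output label $\varepsilon_1\dots\varepsilon_k$.

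Conversely, any path $0=r_0\to r_1\to\cdots\to r_k=0$ in $\mathcal T'$ with output label $\varepsilon_1\dots\varepsilon_k$ carries a well-defined input label $\omega_1\dots\omega_k\in\{0,1,\dots,d-1\}^k$, and its states obey $r_i=\beta r_{i-1}-\omega_i+\varepsilon_i$; telescoping and using $r_0=r_k=0$ yields $\sum_i\varepsilon_i\beta^{k-i}=\sum_i\omega_i\beta^{k-i}$, i.e. (\ref{rep}). The two constructions are mutually inverse: the path built from an admissible $\omega$ has input label $\omega$, and for a path with prescribed output and input the states are forced by the recurrence started at $0$, hence coincide with the carries of that input label. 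This proves that $\mathcal N(\varepsilon_1\dots\varepsilon_k)$ equals the number of such paths.

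For the matrix form I would note that, for $\ell\in\{0,1,\dots,d-1\}$, the entry $n^\ell_{ij}$ is precisely the number (zero or one) of edges of $\mathcal T'$ from $\mathfrak i_i$ to $\mathfrak i_j$ with output label $\ell$, since $\mathfrak i_j=\beta\mathfrak i_i-\omega+\ell$ forces the input $\omega=\beta\mathfrak i_i-\mathfrak i_j+\ell$ and such an edge exists exactly when $\omega\in\{0,1,\dots,d-1\}$. By the standard interpretation of a product of incidence matrices, $(N_{\varepsilon_1}\cdots N_{\varepsilon_k})_{ij}$ then counts the paths of length $k$ from $\mathfrak i_i$ to $\mathfrak i_j$ whose successive output labels are $\varepsilon_1,\dots,\varepsilon_k$; specializing to $i=j=0$ with $\mathfrak i_0=0$ gives the announced identity. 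The only step requiring genuine (if short) care is the claim that the carries never leave the inductively defined state set $\{\mathfrak i_0,\dots,\mathfrak i_a\}$, which is exactly where the bound $q_i\in(-\alpha,\alpha)$ enters; the rest is bookkeeping parallel to the integral-base computation of Proposition \ref{rightleft}.
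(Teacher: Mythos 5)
Your proposal is correct and follows essentially the same route as the paper: the carries $q_i$ satisfy the transition relation (\ref{Eucl-beta}), the telescoping identity shows that $q_0=q_k=0$ is equivalent to (\ref{rep}), and the matrix product is the standard path count for the labelled incidence matrices $N_\ell$. You merely spell out two points the paper leaves implicit — that the carries stay inside the inductively defined state set $\{\mathfrak i_0,\dots,\mathfrak i_a\}$ because they lie in $(-\alpha,\alpha)$, and that the correspondence between admissible input words and paths is a genuine bijection rather than a mere existence equivalence — which is a welcome sharpening, not a departure.
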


\begin{proof}
There exists a path from $0$ to~$0$ with output label $\varepsilon_1\dots\varepsilon_k$, if and only if (\ref{Eucl-beta}) holds for some $q_0=0,q_1,\dots,q_k=0$ in $\{\mathfrak i_0,\dots,\mathfrak i_a\}$ and $\omega_1,\dots,\omega_k$ in $\{0,1,\dots,d-1\}$. This is equivalent to $\displaystyle\sum_{0<j\le k}\frac{\omega_j-\varepsilon_j}{\beta^j}=0$, because (\ref{Eucl-beta}) is equivalent to $\displaystyle\frac{q_{i-1}}{\beta^{i-1}}=\frac{\omega_i-\varepsilon_i}{\beta^i}+\frac{q_i}{\beta^i}$.
\hfill\end{proof}

\begin{exmp}\label{exGolden}If $\beta^2=3\beta-1$ and $d=3$, the transducer $\mathcal T'$ is

\includegraphics[trim=0 -100 0 -50,clip,scale=0.42]{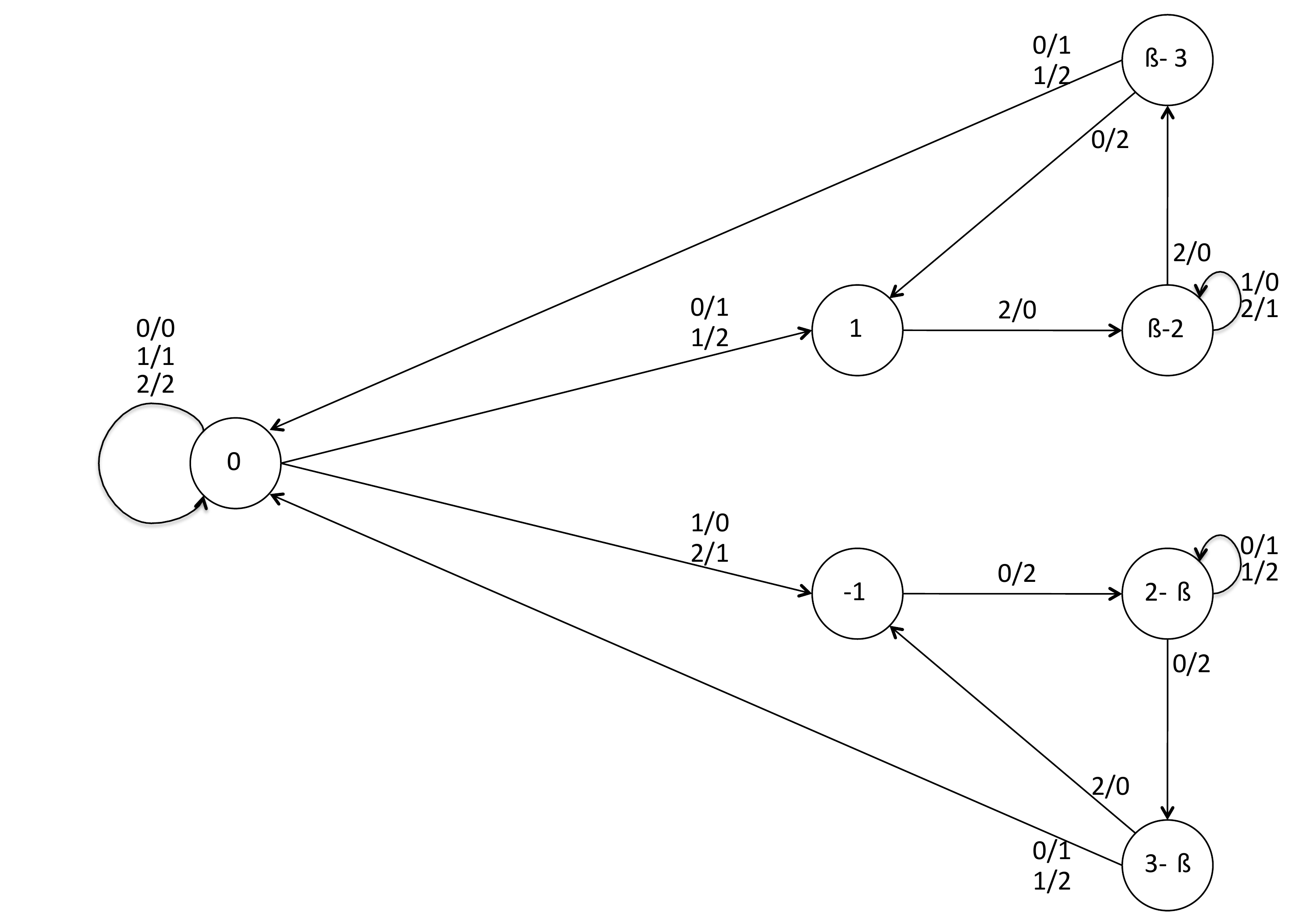}

\end{exmp}

\subsection{The linear representation of the Bernoulli convolutions in Pisot base}

The Bernoulli convolution \cite{Erd39,PSS} in real base $\beta>1$, associated to a positive probability vector $p=(p_0,\dots,p_{d-1})$, $d\ge\beta$, is the probability measure $\eta=\eta_{\beta,p}$ defined by setting, for any interval~\hbox{$I\subset\mathbb R$}
\begin{equation}
\eta_{\beta,p}(I):=P_p\Big(\Big\{(\omega_k)_{k\in\mathbb N}\;:\;0\le\omega_k\le d-1,\ \sum_k\frac{\omega_k}{\beta^k}\in I\Big\}\Big)
\end{equation}
where $P_p$ the product probability defined on $\{0,\dots,d-1\}^{\mathbb N}$ from the probability vector $p$.

In order to compute the value of $\eta_{\beta,p}$ on some suitable intervals, we recall the Parry condition of $\beta$-admissibility \cite[Theorem 2.3.11]{FS}:

\begin{defn}A sequence $(\varepsilon_k)_{k\in\mathbb N}$ is $\beta$-admissible, i.e. is the Rényi expansion of a real $x\in[0,1)$, iff $\forall k,\ \varepsilon_k\varepsilon_{k+1}\dots\prec\alpha_1\alpha_2\dots$ for the lexicographical order, where $(\alpha_k)_{k\in\mathbb N}$ is the quasi-expansion of the unity that can be defined by
$$
1=\sum_{i\in\mathbb N}\frac{\alpha_i}{\beta^i}\quad\hbox{and}\quad\forall i\ge0,\ 0<\sum_{j>i}\frac{\alpha_j}{\beta^j}\le\frac1{\beta^i}.
$$
\end{defn}
From now we assume that $\beta$ has a finite Rényi expansion \cite{R}, or equivalently that $(\alpha_k)_{k\in\mathbb N}$ has a period $T$.

\begin{lem}\label{classical}(i) A sequence is in the closure of Adm$_\beta$ (set of the $\beta$-admissible sequences), if and only if it is a infinite concatenation of words belonging to the set $\mathcal W$ defined by
$$
\begin{array}{rl}w\in\mathcal W\Leftrightarrow&w=\alpha_1\dots\alpha_T\hbox{ or}\\&\exists i\in\{1,\dots,T\},\alpha'_i\in\{0,\dots,\alpha_i-1\},\ w=\alpha_1\dots\alpha_{i-1}\alpha'_i.\end{array}
$$

(ii) If $\varepsilon_1\dots\varepsilon_k$ is a concatenation of words of $\mathcal W$, the $\beta$-adic interval  $I_{\varepsilon_1\dots\varepsilon_k}$, i.e. the set of the $x\in[0,1)$ whose Rényi expansion begins by $\varepsilon_1\dots\varepsilon_k$, is simply
$$
I_{\varepsilon_1\dots\varepsilon_k}=\Big[\sum_{i=1}^k\frac{\varepsilon_i}{\beta^i},\ \sum_{i=1}^k\frac{\varepsilon_i}{\beta^i}+\frac1{\beta^k}\Big).
$$
\end{lem}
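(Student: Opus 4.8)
The plan is to reduce both parts to the combinatorics of the quasi-expansion $(\alpha_k)_{k\in\mathbb N}$, which is periodic of period $T$ by hypothesis. For part (i) I would argue in two directions. For the ``if'' direction, suppose a sequence $(\varepsilon_k)$ is an infinite concatenation of words of $\mathcal W$; I want to check that at every position $k$ one has $\varepsilon_k\varepsilon_{k+1}\dots\preceq\alpha_1\alpha_2\dots$ (non-strict, since we are in the closure of $\mathrm{Adm}_\beta$). The key observation is that each word $w\in\mathcal W$ is a prefix of $\alpha_1\alpha_2\dots$: either $w=\alpha_1\dots\alpha_T$, which is literally a prefix by periodicity, or $w=\alpha_1\dots\alpha_{i-1}\alpha'_i$ with $\alpha'_i<\alpha_i$, which is ``lexicographically strictly below'' already at coordinate $i$. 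Reading a concatenation $w^{(1)}w^{(2)}\dots$ from any starting point $k$, either $k$ falls at the beginning of some block $w^{(j)}$, in which case we compare the tail with $\alpha_1\alpha_2\dots$ block by block and the first strict inequality (coming from the first block of type $\alpha_1\dots\alpha_{i-1}\alpha'_i$) settles it; or $k$ is in the interior of a full block $\alpha_1\dots\alpha_T$, in which case the tail starts $\alpha_{k'}\alpha_{k'+1}\dots$ and one is reduced to the same comparison shifted. The honest version of this is a routine induction on the block structure, using periodicity of $(\alpha_k)$ and the definition of lexicographic order; this is the step I expect to require the most care, because one must handle the boundary between blocks correctly and keep track of strict versus non-strict inequalities.

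For the ``only if'' direction of (i), start from a sequence $(\varepsilon_k)$ in the closure of $\mathrm{Adm}_\beta$, i.e. $\varepsilon_k\varepsilon_{k+1}\dots\preceq\alpha_1\alpha_2\dots$ for all $k$, and cut it into blocks greedily: read $\varepsilon_1\varepsilon_2\dots$ and compare with $\alpha_1\alpha_2\dots$; let $i$ be the first index where $\varepsilon_i<\alpha_i$ (if it exists and is $\le T$), declare the first block to be $\alpha_1\dots\alpha_{i-1}\varepsilon_i$, and restart the process at position $i+1$; if no strict drop occurs within the first $T$ symbols, then by the admissibility inequality $\varepsilon_1\dots\varepsilon_T=\alpha_1\dots\alpha_T$, declare the first block to be $\alpha_1\dots\alpha_T$, and restart at position $T+1$. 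One must check that this greedy cutting never gets stuck, i.e. that after each block the admissibility condition for the shifted tail still holds — but that is immediate since it holds at every position of the original sequence. This produces the desired decomposition into words of $\mathcal W$.

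For part (ii), the point is that once $\varepsilon_1\dots\varepsilon_k$ is a concatenation of words of $\mathcal W$, the left endpoint $\sum_{i=1}^k\varepsilon_i\beta^{-i}$ is itself $\beta$-admissible (its R\'enyi expansion is $\varepsilon_1\dots\varepsilon_k\bar0$, using (i) applied to $\varepsilon_1\dots\varepsilon_k 0 0\dots$, noting $\bar 0$ can be appended as blocks $0\in\mathcal W$), so it genuinely lies in $I_{\varepsilon_1\dots\varepsilon_k}$; and the point $\sum_{i=1}^k\varepsilon_i\beta^{-i}+\beta^{-k}$ is the left endpoint of the ``next'' $\beta$-adic interval. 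More precisely, $I_{\varepsilon_1\dots\varepsilon_k}$ is by definition the set of $x\in[0,1)$ whose R\'enyi expansion starts with $\varepsilon_1\dots\varepsilon_k$; the R\'enyi expansions starting with $\varepsilon_1\dots\varepsilon_k$ are exactly the admissible sequences $\varepsilon_1\dots\varepsilon_k\xi_{k+1}\xi_{k+2}\dots$, and as the tail ranges over all admissible continuations, $x$ ranges over a half-open interval whose infimum is attained at the tail $\bar 0$ and whose supremum is $\sum_{i=1}^k\varepsilon_i\beta^{-i}+\beta^{-k}$ (the latter because the admissible tails are dense from below in $[0,1)$, giving $\sup_\xi \sum_{j>k}\xi_j\beta^{-j}=\beta^{-k}$, not attained). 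The only subtlety is that in general the supremum could fail to equal $\sum\varepsilon_i\beta^{-i}+\beta^{-k}$ if appending an arbitrary admissible tail to $\varepsilon_1\dots\varepsilon_k$ violated admissibility at some position inside the block $\varepsilon_1\dots\varepsilon_k$; the hypothesis that $\varepsilon_1\dots\varepsilon_k$ is a concatenation of words of $\mathcal W$ is exactly what rules this out, since then every internal position already satisfies the admissibility inequality with room to spare (or with equality only along full $\alpha$-blocks, which still allows an arbitrary admissible continuation). So the half-open interval is precisely the displayed one, and we are done.
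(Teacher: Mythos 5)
Your argument for part (i) is essentially the paper's: the same greedy block-cutting for the ``only if'' direction, and the same block-by-block comparison for the ``if'' direction. One point you should make explicit in the ``if'' direction is the shift-maximality of the quasi-expansion, $\alpha_{m+1}\alpha_{m+2}\dots\preceq\alpha_1\alpha_2\dots$ for every $m$ (a consequence of the defining condition $0<\sum_{j>i}\alpha_j\beta^{-j}\le\beta^{-i}$): this is what handles a starting position lying strictly inside a block --- including inside a truncated block $\alpha_1\dots\alpha_{i-1}\alpha'_i$ of length at least $2$, a case your dichotomy omits --- since the tail then begins with (a modification of) a prefix of some $\alpha_{m+1}\alpha_{m+2}\dots$, exactly as in the paper's suffix argument. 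For part (ii) you take a slightly different route to the right endpoint: you argue that every admissible tail can be appended and that the resulting values are dense in $[0,\beta^{-k})$, whereas the paper simply appends the single eventually periodic word $\overline{\alpha_1\dots\alpha_T}$, which by part (i) keeps the sequence in the closure of $\mathrm{Adm}_\beta$ and whose value is exactly $\sum_{i=1}^k\varepsilon_i\beta^{-i}+\beta^{-k}$ because $\sum_j\alpha_j\beta^{-j}=1$. The paper's variant is cleaner in that it avoids both the density claim and the verification that appending an arbitrary admissible tail never produces equality with $\alpha_1\alpha_2\dots$ at an internal position --- a point your parenthetical glosses over and which again requires shift-maximality together with periodicity of $(\alpha_k)$. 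Otherwise the two treatments of (ii) are equivalent, and your proposal is correct.
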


\begin{proof}
(i) If $\varepsilon_1\varepsilon_2\dots\in\overline{\hbox{Adm}}_\beta$ one has $\varepsilon_1\dots\varepsilon_T\preceq\alpha_1\dots\alpha_T$, so there exists $k\le T$ such that
$$
\varepsilon_1\dots\varepsilon_k\in\mathcal W\hbox{ and }\varepsilon_{k+1}\varepsilon_{k+2}\dots\in\overline{\hbox{Adm}}_\beta.
$$
By iteration, $\varepsilon_1\varepsilon_2\dots=w_1w_2\dots$ with $\forall i,\ w_i\in\mathcal W$.

Conversely suppose that $\varepsilon_1\varepsilon_2\dots=w_1w_2\dots$ with $\forall i,\ w_i\in\mathcal W$. Then for any~$i$ there exists $j$ and a nonempty suffix $w'_j$ of $w_j$ such that
$$
\varepsilon_{i+1}\varepsilon_{i+2}\dots=w'_jw_{j+1}\dots.
$$
Clearly, from the definition of $\mathcal W$ one has $w_jw_{j+1}\dots\preceq\alpha_1\alpha_2\dots$. But there exists $k$ such that $w_j=\alpha_1\dots\alpha_kw'_j$, so one deduce
$$
w'_jw_{j+1}\dots\preceq\alpha_{k+1}\alpha_{k+2}\dots\preceq\alpha_1\alpha_2\dots,
$$
proving that $\varepsilon_1\varepsilon_2\dots\in\overline{\hbox{Adm}}_\beta$.

(ii) If $\varepsilon_1\dots\varepsilon_k$ is a concatenation of words of~$\mathcal W$, from (i) the eventually periodic sequence $\varepsilon_1\dots\varepsilon_k\overline{\alpha_1\dots\alpha_T}$ belongs to $\overline{\hbox{Adm}}_\beta$. Denoting by $\varepsilon_1\varepsilon_2\dots$ this sequence, the closed interval $\overline{I_{\varepsilon_1\dots\varepsilon_k}}$ contains $\displaystyle\sum_{i=1}^\infty\frac{\varepsilon_i}{\beta^i}=\sum_{i=1}^k\frac{\varepsilon_i}{\beta^i}+\frac1{\beta^k}$.
\hfill\end{proof}


In order to define the matrices associated to $\eta_{\beta,p}$, we consider a set of reals that may be sligthly different from the set $S'_{\beta,d}$ defined in Subsection~\ref{thetr}: let $\{\mathfrak j_0=0,\dots,\mathfrak j_{a'}\}$ be the smallest set containing $0$ and containing $\beta q-\omega+\varepsilon$, whenever the three reals $q\in\{\mathfrak j_0,\dots,\mathfrak j_{a'}\}$, $\omega\in\{0,1,\dots,d-1\}$, $\varepsilon\in\{0,1,\dots,\lceil\beta\rceil-1\}$ satisfy the condition $\beta q-\omega+\varepsilon\in(-1,\alpha)$. The matrices $M_\ell=(m^{\ell}_{ij})_{0\le i\le a'\atop0\le j\le a'}$, $\ell\in\{0,1,\dots,\lceil\beta\rceil-1\}$, are defined by
$$
m^{\ell}_{ij}=\left\{\begin{array}{ll}0&\hbox{if }\beta\mathfrak j_i-\mathfrak j_j+\ell\not\in\{0,1,\dots,d-1\}\\p_{\beta\mathfrak j_i-\mathfrak j_j+\ell}&\hbox{else.}\end{array}\right.
$$

\begin{thm}\label{linrepPisotcase}
If $\varepsilon_1\dots\varepsilon_k$ is a concatenation of words of $\mathcal W$, one has for any $i\in\{0,\dots,a'\}$
\begin{equation}\label{valuesPisotcase}
\eta(\mathfrak j_i+I_{\varepsilon_1\dots\varepsilon_k})=E_iM_{\varepsilon_1}\dots M_{\varepsilon_k}C
\end{equation}
where $E_0,E_1,\dots,E_{a'}$ are the canonical basis $(a'+1)$-dimensional row vectors and $C$ a suitable positive eigenvector of the irreducible matrix $\sum_{w\in\mathcal W}M_w$. Consequently one can define a sofic measure on the symbolic space $\mathcal W^{\mathbb N}$, by setting
\begin{equation}
\eta_i[w_1\dots w_k]:=\frac{\eta(\mathfrak j_i+I_{w_1\dots w_k})}{\eta(\mathfrak j_i+[0,1))}.
\end{equation}
\end{thm}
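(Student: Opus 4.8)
The plan is to mimic the proof of Theorem~\ref{linrep}, replacing the Euclidean division in integral base by the carry relation~(\ref{Eucl-beta}) and using the combinatorial description of $\beta$-adic intervals from Lemma~\ref{classical}. First I would write, for a single word $w=\varepsilon_1\dots\varepsilon_m\in\mathcal W$ and $I'=I_{\varepsilon_2\dots\varepsilon_m}$, the self-similarity of $\eta$ coming from the first digit $\omega_1$ of a representation: with the convention $p_i=0$ for $i\notin\{0,\dots,d-1\}$,
$$
\eta(\mathfrak j_i+I)=\sum_{\omega\in\mathbb Z}p_\omega\,\eta\big(\beta\mathfrak j_i-\omega+\varepsilon_1+I'\big)=\sum_{j}p_{\beta\mathfrak j_i-\mathfrak j_j+\varepsilon_1}\,\eta(\mathfrak j_j+I'),
$$
where the key point is that the translation $\beta\mathfrak j_i-\omega+\varepsilon_1$ contributes a nonzero $\eta$-value only when it lies in the support-controlled range forcing it to equal some $\mathfrak j_j$ with $\beta\mathfrak j_i-\mathfrak j_j+\varepsilon_1\in\{0,\dots,d-1\}$; this is exactly the condition defining $m^{\varepsilon_1}_{ij}$. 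Thus the column vector $\big(\eta(\mathfrak j_i+I)\big)_i$ equals $M_{\varepsilon_1}$ times $\big(\eta(\mathfrak j_j+I')\big)_j$. Iterating over $\varepsilon_1,\dots,\varepsilon_k$ gives~(\ref{valuesPisotcase}) with $C=\big(\eta(\mathfrak j_i+[0,1))\big)_i$, provided the interval identities in Lemma~\ref{classical}(ii) guarantee that $I_{\varepsilon_1\dots\varepsilon_k}$ really is the half-open interval of length $\beta^{-k}$ and that stripping the leading digit sends it to $\beta I_{\varepsilon_1\dots\varepsilon_k}-\varepsilon_1=I_{\varepsilon_2\dots\varepsilon_k}$, which is where the hypothesis that $\varepsilon_1\dots\varepsilon_k$ is a concatenation of words of $\mathcal W$ is used.

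Next I would identify $C$ as an eigenvector of $\sum_{w\in\mathcal W}M_w$. Summing~(\ref{valuesPisotcase}) for $k=T$ and $\varepsilon_1\dots\varepsilon_T$ ranging over all words of $\mathcal W$, the intervals $I_w$, $w\in\mathcal W$, partition $[0,1)$ (again by Lemma~\ref{classical}: the finitely many words of $\mathcal W$ have disjoint $\beta$-adic intervals whose union is $[0,1)$, since every admissible prefix factors through $\mathcal W$), so $\sum_{w\in\mathcal W}\eta(\mathfrak j_i+I_w)=\eta(\mathfrak j_i+[0,1))$, i.e. $\big(\sum_{w\in\mathcal W}M_w\big)C=C$. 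Irreducibility of $\sum_{w\in\mathcal W}M_w$ is the analogue of Remark~\ref{uniq}: the states $\mathfrak j_j$ were defined as the smallest set closed under the admissible carry moves starting from $0$, so $0$ reaches every state, and by the symmetry of the construction (or by a direct argument on the carry relation) every state reaches $0$ as well; positivity of $C$ follows because every nontrivial subinterval of the support of $\eta$ has positive measure, and then $C$ is the unique such eigenvector up to the normalization $\sum_i E_iC=\eta([0,1))$ coming from $[0,1)\supset$ the relevant part of the support. Once~(\ref{valuesPisotcase}) and the two conditions in~(\ref{conditions}) are in place, Theorem~\ref{Msofic} immediately gives that each $\eta_i[w_1\dots w_k]=\frac{E_iM_{w_1}\cdots M_{w_k}C}{E_iC}$ is linearly representable on $\mathcal W^{\mathbb N}$, hence sofic.

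The main obstacle I expect is the bookkeeping around which carries actually occur, i.e. justifying that the sum over $\omega\in\mathbb Z$ collapses to a sum over the finite state set $\{\mathfrak j_0,\dots,\mathfrak j_{a'}\}$ with exactly the entries $m^{\varepsilon_1}_{ij}$, and no others. Concretely one must check that if $\mathfrak j_i$ is a legitimate carry for an interval contained in the support and $\eta(\beta\mathfrak j_i-\omega+\varepsilon_1+I')\ne0$, then $\beta\mathfrak j_i-\omega+\varepsilon_1$ is forced into $(-1,\alpha)$ and therefore is one of the $\mathfrak j_j$ by minimality of that set, with $\omega=\beta\mathfrak j_i-\mathfrak j_j+\varepsilon_1\in\{0,\dots,d-1\}$. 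This requires a Garsia-type finiteness/separation input together with a careful range estimate on the carries, exactly in the spirit of the descending-induction bound $q_i\le a$ in the proof of Proposition~\ref{rightleft}(ii), but now adapted to the Pisot (non-integral) setting where the carry set $S_{\beta,d}$ is controlled by Garsia's separation lemma. The rest --- the interval combinatorics, the eigenvector identification, and the appeal to Theorem~\ref{Msofic} --- is routine given the groundwork already laid in the paper.
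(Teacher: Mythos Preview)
Your proposal follows the paper's proof essentially line for line: the self-similarity recursion on the first digit giving the matrix step, the identification $C=(\eta([\mathfrak j_i,\mathfrak j_i+1)))_i$, the eigenvector property via the partition $[0,1)=\bigsqcup_{w\in\mathcal W}I_w$, and irreducibility from the definition of the state set. One point the paper handles more carefully than your sketch: after stripping the leading digit, the suffix $\varepsilon_2\dots\varepsilon_k$ is in general \emph{not} a $\mathcal W$-concatenation (e.g.\ when the first $\mathcal W$-word is $\alpha_1\dots\alpha_T$ with $T\ge2$), so Lemma~\ref{classical}(ii) cannot be re-invoked at each step as your phrasing suggests; the paper sidesteps this by defining the intermediate intervals explicitly as $I_j=\big[\sum_{i>j}\varepsilon_i\beta^{j-i},\,\sum_{i>j}\varepsilon_i\beta^{j-i}+\beta^{j-k}\big)$ and using the purely arithmetic relation $\beta I_{j-1}=\varepsilon_j+I_j$, invoking Lemma~\ref{classical}(ii) only once, at $j=0$, to identify $I_0$ with the $\beta$-adic interval $I_{\varepsilon_1\dots\varepsilon_k}$. (Two minor slips: the words of $\mathcal W$ have lengths ranging from $1$ to $T$, not all equal to $T$; and $\sum_iE_iC$ is not $\eta([0,1))$ since the intervals $[\mathfrak j_i,\mathfrak j_i+1)$ overlap---$C$ is pinned down by its definition, not by a normalization.)
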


\begin{proof}Let $I_j=\big[\sum_{i=j+1}^k\frac{\varepsilon_i}{\beta^{i-j}},\ \sum_{i=j+1}^k\frac{\varepsilon_i}{\beta^{i-j}}+\frac1{\beta^{k-j}}\big)$  for any $0\le j\le k$; so one has $I_0=I_{\varepsilon_1\dots\varepsilon_k}$ (by Lemma \ref{classical} (ii)), $I_k=[0,1)$ and for any $j\ne0$
$$
\beta I_{j-1}=\varepsilon_j+I_j.
$$
Assuming by convention that $p_i=0$ for any $i\not\in\{0,1,\dots,d-1\}$,
$$
\begin{array}{rcl}\eta(\mathfrak j_i+I_{j-1})&=&\displaystyle\sum_{\iota=0}^{d-1}P\Big(\big\{\omega_1=\iota\hbox{ and }\sum_{k=1}^\infty\frac{\omega_{k+1}}{\beta^k}\in\beta\mathfrak j_i+\beta I_{j-1}-\iota\big\}\Big)\\&=&\displaystyle\sum_{\iota=0}^{d-1}p_\iota P\Big(\big\{\sum_{k=1}^\infty\frac{\omega_{k+1}}{\beta^k}\in\beta\mathfrak j_i-\iota+\varepsilon_j+I_j\big\}\Big)\\&=&\displaystyle\sum_{i'=0}^{a'}p_{\beta\mathfrak j_i-\mathfrak j_{i'}+\varepsilon_j}\ \eta\left(\mathfrak j_{i'}+I_j\right)\end{array}
$$
so one has
$$
\begin{pmatrix}\eta(\mathfrak j_0+I_{j-1})\\\vdots\\\eta(\mathfrak j_{a'}+I_{j-1})\end{pmatrix}=M_{\varepsilon_j}\begin{pmatrix}\eta(\mathfrak j_0+I_j)\\\vdots\\\eta(\mathfrak j_{a'}+I_j)\end{pmatrix}$$
and (\ref{valuesPisotcase}) follows by induction, with
\begin{equation}\label{firstdef}
C:=\begin{pmatrix}\eta([\mathfrak j_0,\mathfrak j_0+1))\\\vdots\\\eta([\mathfrak j_{a'},\mathfrak j_{a'}+1))\end{pmatrix}.
\end{equation}

From Lemma \ref{classical} (i) the intervals $I_w$, $w\in\mathcal W$, form a partition of $[0,1)$. Denoting by $M_w$ the product matrix $M_{\varepsilon_1}\dots M_{\varepsilon_k}$ for any $w=\varepsilon_1\dots\varepsilon_k$ one deduce that the column vector $C$, defined as in (\ref{firstdef}), satifies
$$
\sum_{w\in\mathcal W}M_wC=C.
$$
The entries of $C$ are positive because the intervals $[\mathfrak j_i,\mathfrak j_i+1)$ intersect the interior of $[0,\alpha)$ (the support of $\eta$). Now $\sum_{w\in\mathcal W}M_w$ is irreducible because, from the definition of $\{\mathfrak j_0,\dots,\mathfrak j_{a'}\}$, there exists a path from any $\mathfrak j_i$ to any $\mathfrak j_{i'}$ in the transducer $\mathcal T_\eta$ of set of states $\{\mathfrak j_0,\dots,\mathfrak j_{a'}\}$ and arrows $q{{\omega\ /\ \varepsilon\atop\xrightarrow{\hspace*{50pt}}}\atop\phantom{}}\beta q-\omega+\varepsilon$.
\hfill\end{proof}

\begin{exmp}\label{exgold}This is the transducer $\mathcal T_\eta$ in the case $\beta=\frac{1+\sqrt5}2$ and $d=2$, the set of states being $\{\mathfrak j_0=0,\mathfrak j_1=1,\mathfrak j_2=\beta-1\}$:

\includegraphics[trim=0 220 0 170,clip,scale=0.42]{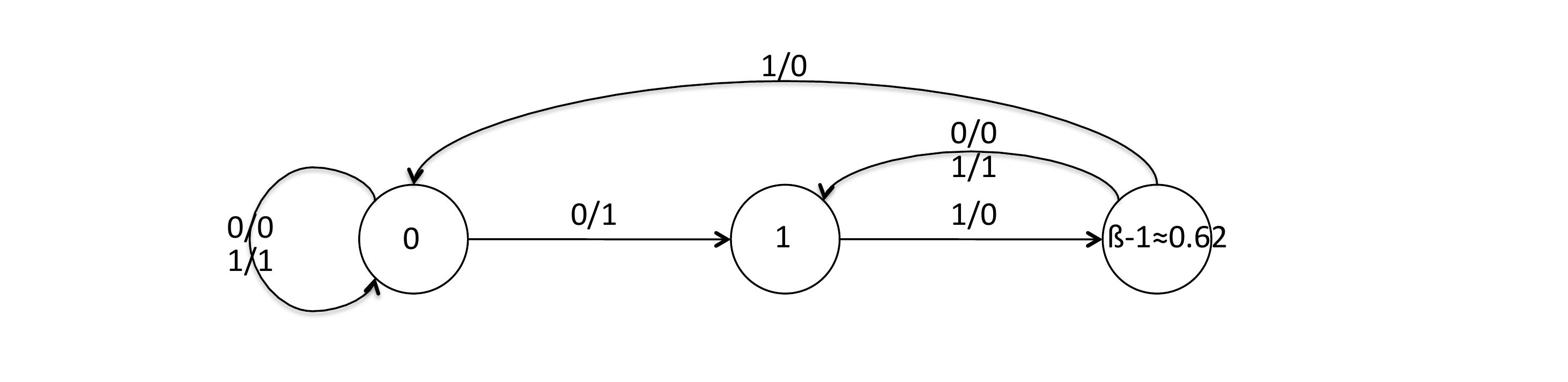}

For $\ell\in\{0,1\}$ and $i,j\in\{0,1,2\}$, the $(i,j)$-entry of $M_\ell$ is $p_\omega$ if $\omega=\beta\mathfrak j_i-\mathfrak j_j+\ell$ is an integer, i.e. when there exist a arrow $\mathfrak j_i{{\omega\ /\ \ell\atop\xrightarrow{\hspace*{50pt}}}\atop\phantom{}}\mathfrak j_j$:
$$
M_0=\begin{pmatrix}p_0&0&0\\0&0&p_1\\p_1&p_0&0\end{pmatrix}\quad\hbox{and}\quad M_1=\begin{pmatrix}p_1&p_0&0\\0&0&0\\0&p_1&0\end{pmatrix}.
$$

We have $\mathcal W=\{0,10\}$. So we can construct from $\mathcal T_\eta$ a new transducer with output labels $0$ and $10$:

\includegraphics[trim=0 170 0 150,clip,scale=0.42]{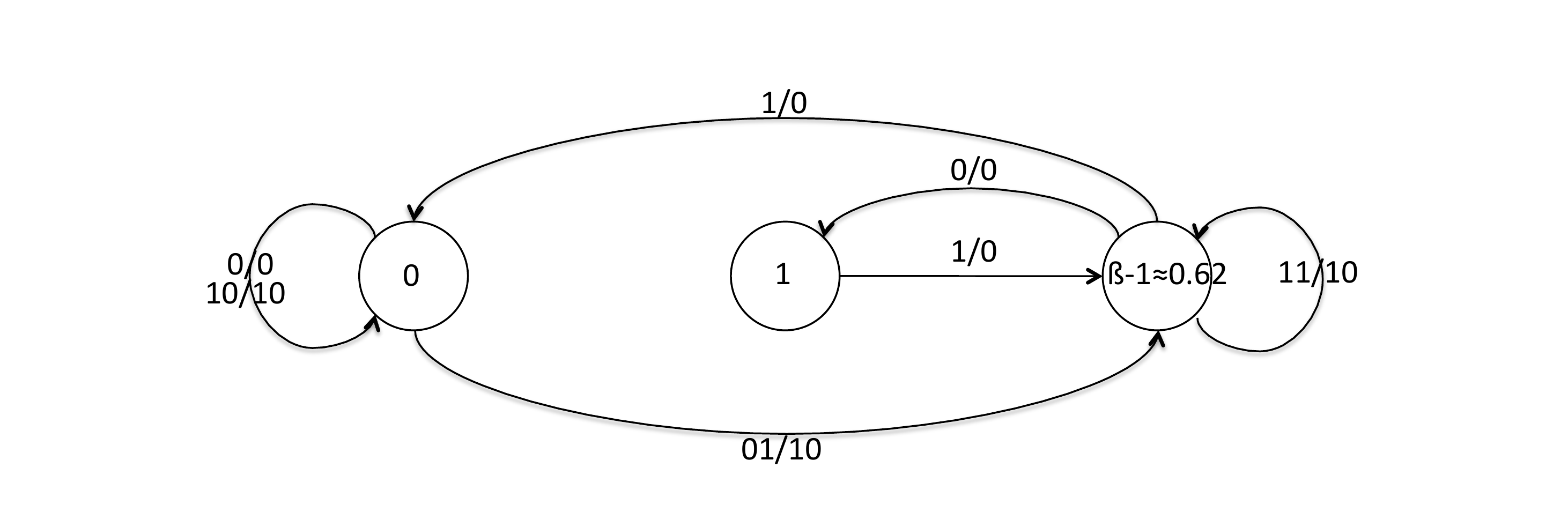}

We deduce the graph of the Markov chain associated to each of the the sofic measures $\eta_0,\eta_1,\eta_2$:

\includegraphics[trim=0 50 0 50,clip,scale=0.42]{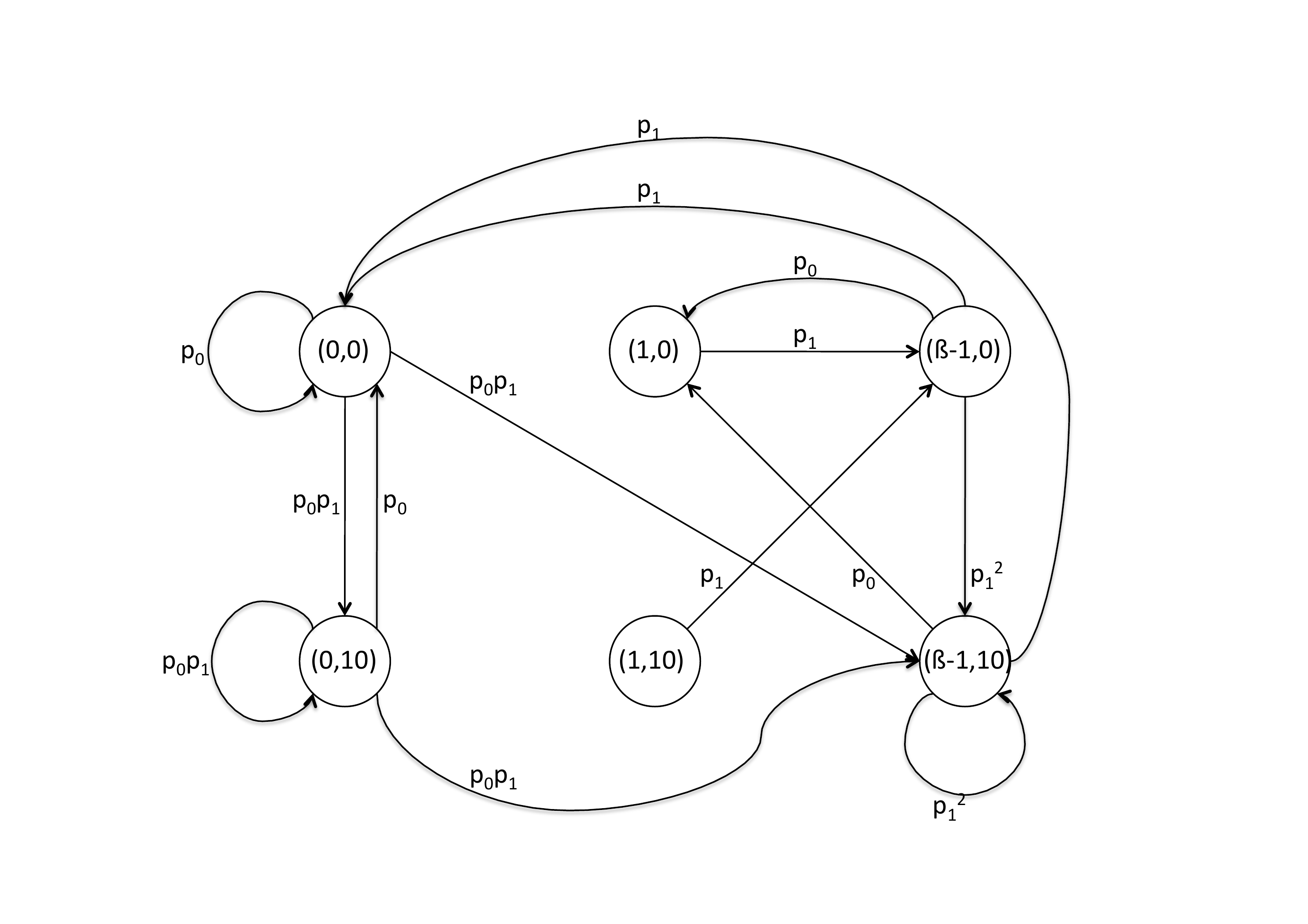}

and its transition matrix:
$$
\begin{pmatrix}M_0&M_{10}\\M_0&M_{10}\end{pmatrix}=\begin{pmatrix}p_0&0&0&p_0p_1&0&p_0p_1\\0&0&p_1&0&0&0\\p_1&p_0&0&0&0&{p_1}^2\\p_0&0&0&p_0p_1&0&p_0p_1\\0&0&p_1&0&0&0\\p_1&p_0&0&0&0&{p_1}^2\end{pmatrix}.
$$
\end{exmp}

\section{More about the representations in base $b=2$ with three digits}

Let $\mathcal N(n)$ be the number of representations, with digits in $\{0,1,2\}$, of the integer
$$
n=_{_2}1^{a_s}0^{a_{s-1}}\dots0^{a_1}1^{a_0},
$$
where $a_0\ge0$ ($a_0=0$ iff $n$ is even), and $a_1,\dots,a_s\ge1$. According to \cite[Proposition 6.11]{D}, $\mathcal N(n)$ is the denominator $q_s$ of the continued fraction $\displaystyle[0;a_1,\dots,a_s]:=\frac1
{\displaystyle a_1+\frac1
{\displaystyle a_2+\dots+{\atop\displaystyle\frac1
{a_s}}}}$. From \cite[Theorem 19]{FLT} there exists $\alpha_0>0$ and $S\subset\mathbb N$ of natural density~$1$ such that
\begin{equation}\label{alpha0}
\lim_{n\in S,\ n\to\infty}\frac{\log \mathcal N(n)}{\log n}=\alpha_0.
\end{equation}
From the classical formula giving the denominator of the continued fraction in terms of $a_1,\dots,a_s$, one has
$$
\alpha_0=\lim_{k\to\infty}\frac1{2^k\log(2^k)}\sum_{s,a_1,\dots,a_s}\log\left(\sum_{h,i_0,\dots,i_{h+1}}\prod_{j=0}^ha_{i_j,i_{j+1}}\right),
$$
summing for $s,a_1,\dots,a_s\in\mathbb N$ such that $\sum a_i\le k$ and for $h\in\mathbb N\cup\{0\}$, $(i_0,\dots,i_{h+1})\in\{0\}\times\mathbb N^h\times\{s+1\}$, where
$$
a_{i',i}:=\left\{\begin{array}{ll}a_i&\hbox{if }i-i'\hbox{ is an odd positive integer}\\0&\hbox{else.}\end{array}\right.
$$

By the following computation, using the properties of the function $\mathcal N$, one obtains $\alpha_0\approx0.56$:

$f := $proc$ (n):\ $if$\ n=2*$floor$(n/2)\ $then$\ f(n/2)+f(n/2-1)\ $else$\ f(n/2-1/2)\ $end$\ $if$\ $end$\ $proc$:$

$f(0) := 1:$
 
$s := $seq$($ln$(f(n)),\ n = 0 .. 16383):$

$\alpha_0:= $evalf$($sum$(s[n],\ n = 8192 .. 16383)/(8192*$ln$(8192)));$

\begin{pro}\label{ae}The constant $\alpha_0$ defined in (\ref{alpha0}) is also the Lebesgue-a.e. value of $\displaystyle\lim_{s\to\infty}\frac{\log q_s(x(t))}{s\log4}$, where $x(t)=[0;a_1,a_2,\dots]$ for any $t=_{_2}0.1^{a_1}0^{a_2}1^{a_3}\dots$ in the interval $\big[\frac12,1\big)$.
\end{pro}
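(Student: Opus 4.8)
The strategy is to realize the map $t\mapsto x(t)$ as a measure‑theoretic isomorphism between the binary‑digit dynamics on $\big[\tfrac12,1\big)$ and the Gauss dynamics on continued fractions, and then transport the known almost‑sure growth rate $\alpha_0$ through this isomorphism. Write $t=_{_2}0.1^{a_1}0^{a_2}1^{a_3}\dots$; the run‑length coding $t\mapsto(a_1,a_2,a_3,\dots)$ sends Lebesgue measure on $\big[\tfrac12,1\big)$ to a measure on $\mathbb N^{\mathbb N}$ which is \emph{equivalent} (mutually absolutely continuous) to the measure that the Gauss map pushes onto the continued‑fraction digits of $x=[0;a_1,a_2,\dots]$. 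Indeed both the binary odometer restricted to $\big[\tfrac12,1\big)$ and the Gauss map are ergodic with respect to measures in the same class, so one only needs that the coding is a Borel isomorphism off a null set — which it is, the exceptional sets being the eventually‑constant binary expansions and the rationals, both countable.

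First I would make precise the dictionary already used in the paper: for $n=_{_2}1^{a_s}0^{a_{s-1}}\dots0^{a_1}1^{a_0}$ one has $\mathcal N(n)=q_s=q_s(a_1,\dots,a_s)$, the continued‑fraction denominator, by \cite[Proposition 6.11]{D}. Restricting to odd $n$ (i.e. $a_0\ge1$) so that the leading and trailing blocks are genuine $1$‑runs, the dyadic rationals $n/2^{k(n)}\in\big[\tfrac12,1\big)$ with $k(n)=1+\lfloor\log n/\log 2\rfloor$ are exactly the truncations of points $t\in\big[\tfrac12,1\big)$, and $\log n=k(n)\log 2+O(1)$. Hence along a point $t$ whose binary expansion begins with $s$ complete alternating blocks of total length $L_s=a_1+\dots+a_s$ (so $k\approx L_s$), the quantity $\frac{\log\mathcal N(n)}{\log n}$ evaluated at the corresponding truncation is $\frac{\log q_s}{L_s\log 2}+o(1)$.

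Next I would invoke the Lévy–Khinchin theorem for continued fractions: for Lebesgue‑a.e.\ $x$, $\frac{\log q_s(x)}{s}\to\frac{\pi^2}{12\log 2}$, and by the ergodic theorem (Birkhoff applied to $\log$ of the CF digit, which is Gauss‑integrable) $\frac{L_s}{s}=\frac1s\sum_{j\le s}a_j$ does \emph{not} converge — it tends to $+\infty$ a.e. — so I cannot simply divide. The correct move is to observe that $\frac{\log q_s}{s\log 4}$ is precisely the quantity whose a.e.\ value is claimed, and to match it directly to $\alpha_0$ via \eqref{alpha0}: the set $S\subset\mathbb N$ of density $1$ on which $\frac{\log\mathcal N(n)}{\log n}\to\alpha_0$ pulls back, under $n\mapsto t$, to a set of $t$ of full Lebesgue measure (here one uses that $n\mapsto n/2^{k(n)}$ spreads a density‑$1$ subset of integers onto a full‑measure subset of $\big[\tfrac12,1\big)$, which follows from the fact that dyadic truncations of a.e.\ $t$ land in any prescribed density‑$1$ set — a Borel–Cantelli / Lebesgue density argument). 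On that full‑measure set, $\frac{\log q_s}{L_s\log 2}\to\alpha_0$; and since $k(n)\sim L_s$ and $\log n\sim k(n)\log 2$, while the exponent in the statement is normalized by $s\log 4=2s\log 2$ rather than by $L_s\log 2$, the identification $\lim_s\frac{\log q_s(x(t))}{s\log 4}=\alpha_0$ is exactly the assertion that the \emph{per‑block} average length $L_s/s$ and the \emph{per‑symbol} normalization conspire through the Legendre‑type relation between $\mathcal N(n)$ and $q_s$; concretely one checks $s\log 4$ and $\log n=L_s\log 2+O(1)$ give the same limit because the density‑$1$ set $S$ is, by \cite[Theorem 19]{FLT}, built precisely so that $\log\mathcal N(n)/\log n$ and $\log q_s/(s\log 4)$ have a common limit.

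The main obstacle will be the last identification: turning the \emph{arithmetic} density‑$1$ statement \eqref{alpha0} into a \emph{measure‑theoretic} almost‑everywhere statement on $\big[\tfrac12,1\big)$, i.e.\ justifying that the push‑forward of $\{n\in S\}$ under $n\mapsto n/2^{k(n)}$ has full Lebesgue measure and that the switch of normalizing factor from $\log n$ to $s\log 4$ does not move the limit. I expect to handle this by a stopping‑time / Borel–Cantelli argument: for a.e.\ $t$ the proportion of scales $k\le K$ at which the truncation $\lfloor 2^k t\rfloor$ lies outside $S$ tends to $0$, so one can select a subsequence of scales realizing the limit $\alpha_0$, and then — using \eqref{lem9}‑type control $\mathcal N(n)/\mathcal N(n\pm1)=n^{o(1)}$ from \cite[Lemma 9]{FLT} together with the recursion $q_s=a_sq_{s-1}+q_{s-2}$ — upgrade the subsequential limit to a genuine limit along $s$. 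The CF‑side inputs (ergodicity of the Gauss map, Lévy's constant, integrability of $\log a_1$) are standard and I would cite them; everything else is bookkeeping with the two codings.
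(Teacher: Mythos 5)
Your proof rests on a transfer principle that is false in general, and this is precisely the point where the real difficulty of the statement lives. You want to deduce the Lebesgue-a.e.\ assertion from the natural-density-$1$ statement (\ref{alpha0}) by arguing that the truncations $n(t,k)=\lfloor 2^kt\rfloor$ of a.e.\ $t$ land in the density-$1$ set $S$ often enough (``a Borel--Cantelli / Lebesgue density argument''). But density $1$ of $S$ only gives $\int_{1/2}^1\frac1K\#\{k\le K:\lfloor 2^kt\rfloor\notin S\}\,dt\to0$, hence by Fatou that the bad scales have \emph{lower} density $0$ for a.e.\ $t$ --- i.e.\ the good scales have upper density $1$, which yields only a subsequential limit. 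One cannot upgrade this: there exist density-$1$ sets $S\subset\mathbb N$ for which the bad scales have upper density bounded away from $0$ for a.e.\ $t$ (take $S^c\cap[2^{k-1},2^k)$, for all $k$ in a long block, to correspond to the event $\{\varepsilon_{m_j+1}=\dots=\varepsilon_{m_j+r_j}=1\}$ with $r_j$ growing slowly; the second Borel--Cantelli lemma then puts a.e.\ $t$ in infinitely many of these events while $S$ keeps density $1$). Your fallback, interpolating between good scales using the monotonicity of $k\mapsto\mathcal N(n(t,k))$ and (\ref{lem9}), needs the gaps between good scales to be $o(k)$, which upper density $1$ does not give. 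The paper avoids this entirely: it does \emph{not} derive the a.e.\ statement from (\ref{alpha0}), but quotes \cite[Corollary 1.2]{SF} for the a.e.\ value $\frac{\log3}{\log2}-\alpha_0$ of the local dimension of $\eta_{2,3}$ (with \cite[Remark 21]{FLT} identifying the constant), and then converts local dimension into $\log q_s/(s\log4)$ via (\ref{etabpdeI}) and (\ref{lem9}). Without an input of this kind your argument does not close.

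A second, conceptual error: the run-length coding $t\mapsto(a_1,a_2,\dots)$ does \emph{not} send Lebesgue measure to a measure equivalent to the Gauss measure on continued-fraction digits. Under Lebesgue measure the $a_j$ are i.i.d.\ geometric with mean $2$, whereas under the Gauss measure $\mathbb E[a_1]=\infty$; the two laws on $\mathbb N^{\mathbb N}$ are mutually singular. This singularity is exactly why $\alpha_0\log4\approx0.78$ differs from the L\'evy constant $\pi^2/(12\log2)$, as the Remark following the Proposition points out. You half-notice the issue (``I cannot simply divide'') but misattribute it: under Lebesgue measure on $t$ one \emph{can} divide, because the strong law gives $a_1+\dots+a_s\sim2s$ a.e., and this is where the normalization $s\log4=2s\log2$ comes from. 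That law of large numbers is an essential step of the paper's proof (it converts $\lim\frac{\log q_s}{(a_1+\dots+a_s)\log 2}=\frac{\log 3}{\log 2}-d_{\rm loc}(t)$ into the stated limit) and never actually appears in your write-up.
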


\begin{proof}From \cite[Remark 21]{FLT}, $\alpha_0\log2$ is the constant $\gamma$ defined in \cite[Theorem 1.1]{SF}. Now by \cite[Corollary 1.2]{SF}, $\frac{\log3}{\log2}-\alpha_0$ is the almost sure value of the local dimension $d_{\rm loc}(t)$ of the Bernoulli convolution $\eta=\eta_{2,3}$ (in base $2$ with three digits).

In the sequel we denote by $0.1^{a_1}0^{a_2}1^{a_3}\dots$ or by $\varepsilon_1\varepsilon_2\dots$ the expansion of the real $t\in\big[\frac12,1\big)$ in base~$2$ (we suppose it is not eventually $0$ nor eventually $1$), and for any $k\ge1$ we put
$$
n(t,k):=\varepsilon_12^{k-1}+\dots+\varepsilon_k2^0.
$$
In case $k=a_1+\dots+a_s$ for some even integer~$s$, the formula of \cite[Proposition 6.11]{D} gives $\mathcal N(n(t,k))=q_s(x(t))$. By (\ref{etabpdeI}),
\begin{equation}\label{samesame}
\mathcal N(n(t,k))+\mathcal N(n(t,k)-1)=2\cdot3^k\eta(I_{\varepsilon_1\dots\varepsilon_k}).
\end{equation}
But for any integer $n$, $\mathcal N(n-1)$ is in a sense close to $\mathcal N(n)$: by \cite[Lemma~9~(ii)]{FLT} one has $\displaystyle\frac1{C\log n}\le\frac{\mathcal N(n)}{\mathcal N(n-1)}\le C\log n$. So (\ref{samesame}) implies
\begin{equation}\label{samesamesame}
\lim_{s\ {\rm even}\atop s\to\infty}\frac{\log q_s(x(t))}{a_1+\dots+a_s}=\lim_{k\to\infty}\frac{\log\left(2\cdot3^k\eta(I_{\varepsilon_1\dots\varepsilon_k})\right)}k=\log3-d_{\rm loc}(t)\log2.
\end{equation}
Since, from the law of large numbers, $a_1+\dots+a_s$ is equivalent to~$2s$ for Lebesgue-a.e. $t$, (\ref{samesamesame}) implies for Lebesgue-a.e. $t\in\big[\frac12,1\big)$
$$
\lim_{s\to\infty}\frac{\log q_s(x(t))}{2s\log2}=\frac{\log3}{\log2}-d_{\rm loc}(t)=\alpha_0.
$$
\hfill\end{proof}

\begin{rem}
Using the matricial expression of the denominators of the continued fractions, Proposition \ref{ae} means that $\alpha_0\log2$ is the Lyapunov exponent of the set of matrices $\Big\{\begin{pmatrix}1&0\\1&1\end{pmatrix},\begin{pmatrix}1&1\\0&1\end{pmatrix}\Big\}$.
\end{rem}

\begin{rem}
The Levy constant
\begin{equation}\label{Levy}
\lim_{s\to\infty}\frac{\log q_s(x)}s=\frac{\pi^2}{12\log2}\approx1.18657\quad\hbox{for Lebesgue-a.e. }x\in(0,1)
\end{equation}
is larger than the value of
\begin{equation}\label{alphazero}
\lim_{s\to\infty}\frac{\log q_s(x(t))}s=\alpha_0\log4\approx0.78\quad\hbox{for Lebesgue-a.e. }t\in\Big[\frac12,1\Big).
\end{equation}
On the other side it is well known that the partial quotients $a_i(x)$ of Lebesgue-a.e. real $x$ satisfy
\begin{equation}\label{infty}
\lim_{s\to\infty}\frac{a_1(x)+\dots a_s(x)}s=\infty
\end{equation}
For any $t$ such that $x(t)$ satisfies (\ref{Levy}) and (\ref{infty}), the l.h.s. in (\ref{samesamesame}) tends to $0$, consequently the local dimension at~$t$ is maximal: $d_{\rm loc}(t)=\frac{log3}{\log2}\approx1.585$. Since $\frac{log3}{\log2}-\alpha_0\approx1.025$ and $\frac{log3}{\log2}-\frac{log\frac{1+\sqrt5}2}{\log2}\approx0.891$ (the minimal local dimension because $q_s\le\big(\frac{1+\sqrt5}2\big)^{a_1+\dots+a_s}$ by induction), the graph of the singularity spectrum of $\eta_{2,3}$ has the following form:

\includegraphics[trim=0 112 0 0,clip,scale=0.42]{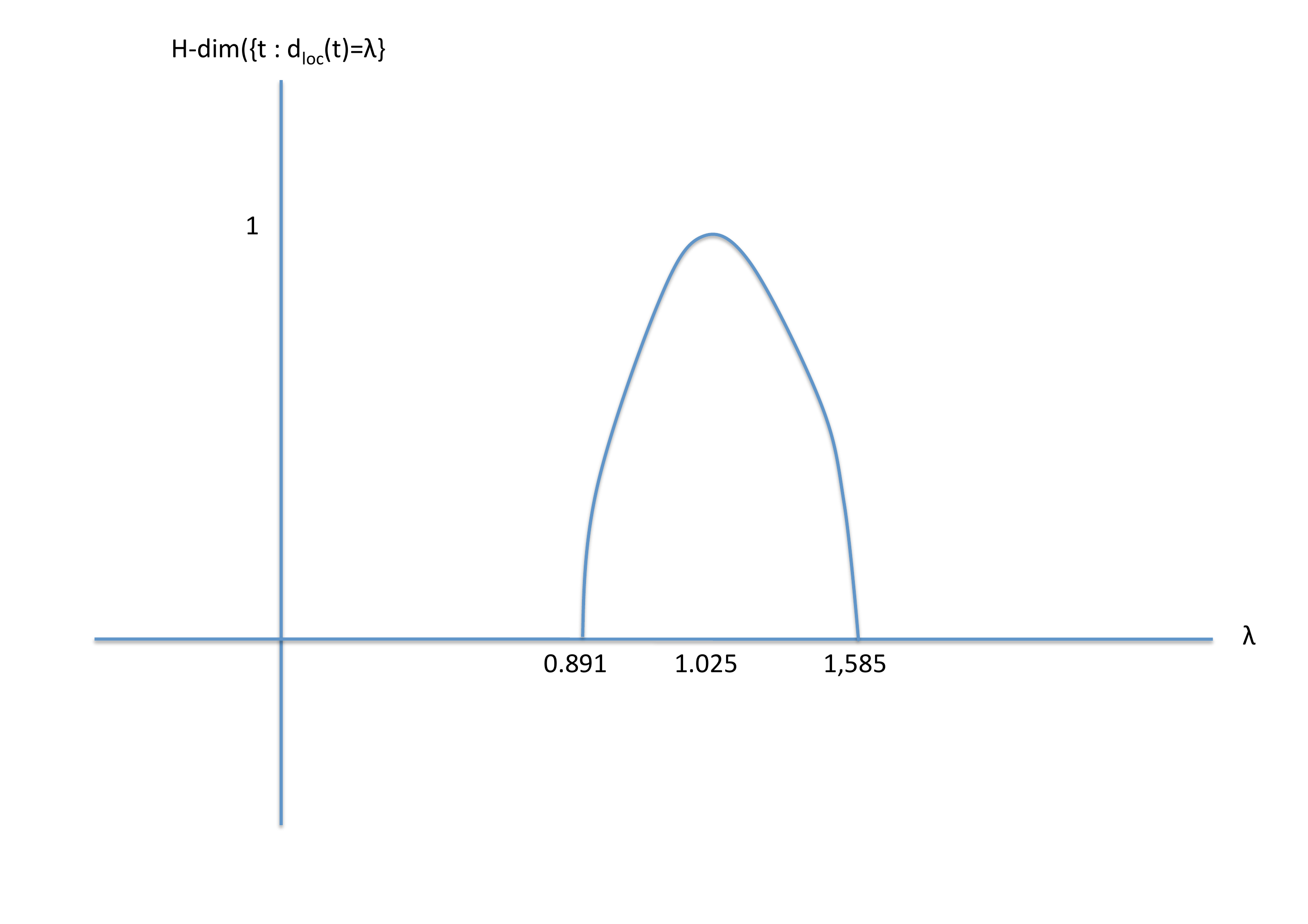}
\end{rem}

\end{document}